\documentclass[11pt]{article}

\usepackage[utf8]{inputenc}
\usepackage[T1]{fontenc}
\usepackage[ngerman,english]{babel}
\usepackage{lmodern}

\usepackage{amsmath}
\usepackage{amsthm}
\usepackage{amsfonts}
\usepackage{amssymb}
\usepackage{amscd}

\usepackage{fancyhdr}
\usepackage{geometry}
\usepackage{setspace}
\usepackage{color}
\usepackage{tocloft}

\usepackage{makeidx}
\usepackage{graphicx}
\usepackage{tabularx}
\usepackage{epsfig}

\usepackage{eurosym}
\usepackage{enumerate}

\usepackage[colorlinks,citecolor=red,linkcolor=blue]{hyperref}

\theoremstyle{definition}
\newtheorem{definition}{Definition}[section]

\theoremstyle{plain}
\newtheorem{theorem}[definition]{Theorem}
\newtheorem{corollary}[definition]{Corollary}
\newtheorem{lemma}[definition]{Lemma}
\newtheorem{proposition}[definition]{Proposition}

\theoremstyle{remark}

\newtheorem{example}[definition]{Example}

\newcommand{\N}{{\mathbb N}}

\newcommand{\R}{{\mathbb R}}

\newcommand{\E}{{\mathbb E}}

\newcommand{\err}{{\mathrm{err}}}
\newcommand{\disc}{{\mathrm{disc}}}

\newcommand{\tr}{\!^\top\!}

\newcommand{\lin}{\mathrm{span}}
\newcommand{\integ}{\mathrm{int}}
\newcommand{\id}{\mathrm{id}}

\newcommand{\cA}{{\mathcal A}}

\newcommand{\cD}{{\mathcal D}}

\newcommand{\cH}{{\mathcal H}}
\newcommand{\cL}{{\mathcal L}}
\newcommand{\cP}{{\mathcal P}}

\newcommand{\cK}{{\mathcal K}}
\newcommand{\cJ}{{\mathcal J}}
\newcommand{\cO}{{\mathcal O}}

\newcommand{\x}{{\boldsymbol{x}}}
\newcommand{\y}{{\boldsymbol{y}}}

\newcommand{\e}{{\boldsymbol{e}}}

\renewcommand{\k}{{\boldsymbol{k}}}
\renewcommand{\j}{{\boldsymbol{j}}}

\renewcommand{\r}{{\boldsymbol{r}}}
\renewcommand{\t}{{\boldsymbol{t}}}
\newcommand{\p}{{\boldsymbol{p}}}
\newcommand{\w}{{\boldsymbol{w}}}
\newcommand{\ellb}{{\boldsymbol{\ell}}}
\newcommand{\gammab}{{\boldsymbol{\gamma}}}
\newcommand{\alphab}{{\boldsymbol{\alpha}}}
\newcommand{\omegab}{{\boldsymbol{\omega}}}
\newcommand{\betab}{{\boldsymbol{\beta}}}
\newcommand{\epsilonb}{{\boldsymbol{\epsilon}}}
\newcommand{\0}{{\boldsymbol{0}}}
\newcommand{\1}{{\boldsymbol{1}}}

\newcommand{\linpart}{{\boldsymbol b}}

\newcommand{\TODO}[1]{}
\newcommand{\EXCLUDE}[1]{}

\geometry{a4paper, left=25mm, right=25mm, top=30mm, bottom=25mm} 
\onehalfspacing
\bibliographystyle{plain}
\allowdisplaybreaks

\pagestyle{plain}

\begin{document}

\title{High-dimensional integration on $\R^d$, weighted Hermite spaces,
and orthogonal transforms}

\author{Christian Irrgeher\thanks{Christian Irrgeher, Institute of Financial Mathematics, Johannes Kepler University Linz, Altenbergerstra{\ss}e 69, A-4040 Linz, Austria. e-mail: {\tt christian.irrgeher@jku.at}}~~and Gunther Leobacher\thanks{Gunther Leobacher, Institute of Financial Mathematics, Johannes Kepler University Linz, Altenbergerstra{\ss}e 69, A-4040 Linz, Austria. e-mail: {\tt gunther.leobacher@jku.at}.}}

\maketitle

\begin{abstract}
It has been found empirically that quasi-Monte Carlo methods
are often efficient for very high-dimensional
problems, that is, with dimension in the hundreds or even thousands. 
The common explanation for this surprising fact is that those functions
for which this holds true 
behave rather like low-dimensional functions in that only few of the 
coordinates have a sizeable influence on its value. However, this statement
may be true only after applying a suitable orthogonal transform to the
input data, like utilizing the Brownian bridge construction or principal 
component analysis construction.

We study the effect of general orthogonal transforms on functions on $\R^d$ 
which are elements of certain 
weighted reproducing kernel Hilbert spaces. The notion of
Hermite spaces is defined and it is shown that there are examples which admit
tractability of integration. We translate the action of the orthogonal
transform of $\R^d$ into an action on the Hermite coefficients and we 
give examples where orthogonal transforms have
a dramatic effect on the weighted norm, thus providing an explanation for 
the efficiency of using suitable orthogonal transforms.
\end{abstract}


\noindent {\em Keywords: } Quasi-Monte Carlo, tractability, effective dimension, orthogonal transforms

\noindent {\em MSC2010: } 65D30, 65Y20

\section{Introduction}\label{sec:intro}

Many important problems from quantitative finance  and other applications of
probability theory can be written as expected values of functions depending on
the path of a Brownian motion, i.e.  \begin{align}\label{eq:mainproblem}
\E(g(B))=\ ?
\end{align}
where $B=(B_t)_{t\in[0,T]}$ is an $m$-dimensional standard Brownian motion with
index set $[0,T]$ and $g$ is a  function such that the expected value is
well-defined. The most prominent example from finance is the celebrated
Black-Scholes equation for option pricing. In its general form it states that
the problem of determining the value of a European-style contingent claim --
like a European or Asian option -- in an arbitrage-free market model driven by
a Brownian motion, can be formulated as the computation of an expected value of
the form \eqref{eq:mainproblem}, see e.g.\ Bj{\"o}rk \cite{bjork}.

In most cases of practical interest these problems cannot be solved in closed
form, and thus it is necessary to approximate \eqref{eq:mainproblem}
numerically. A very versatile way of doing this is by using Monte Carlo,  that
is, a large number of random paths of the Brownian motion is generated and the
function value of $g$ over this sample is averaged. Usually, the actual Monte
Carlo simulation requires the discretization of the Brownian path. One thus
uses a randomized algorithm $Q_{n,d}$, where $d$ denotes the dimension of the
discretized problem and $n$ corresponds to the number of (discretized) paths
over which the average is taken. 

Also if a deterministic algorithm, like quasi-Monte Carlo, is to be applied,
the problem has first to be discretized. For that, we use a time grid of $d_0$
equidistant points to discretize the time interval $[0,T]$ such that we get a
discrete $m$-dimensional Brownian path
$(B_{1/d_0},B_{2/d_0},\ldots,B_{d_0/d_0})$ with $B_{i/d_0}\in\R^m$ for
$i=1,\ldots,d_0$. Moreover, we have to choose a $d=m\times d_0$-dimensional
function $g_{d_0}$ depending on the discrete Brownian path
$(B_{1/d_0},B_{2/d_0},\ldots,B_{d_0/d_0})$ such that
$\E(g(B))\approx\E(g_{d_0}(B_{1/d_0},B_{2/d_0},\ldots,B_{d_0/d_0}))$. So the
problem is approximated by the multivariate integration problem
$\E(g_{d_0}(B_{1/d_0},B_{2/d_0},\ldots,B_{d_0/d_0}))$ of dimension $d$ which can
be solved, e.g., by quasi-Monte Carlo integration, see Dick and Pillichshammer
\cite{dick}. Note that, here the integration problem for some
function $f$ is given over $\R^d$ and thus, a quasi-Monte Carlo method 
for computing this integral is given by an equally weighted quadrature rule 
$Q_{n,d}(f,\cP)=\frac{1}{n}\sum_{i=1}^{n}f(\x_i)$
with deterministic point set $\cP=\{\x_1,\ldots,\x_n\}\subset\R^d$. 

In contrast to Monte Carlo, the method with which the discrete Brownian paths
are generated makes a difference for quasi-Monte Carlo methods. There are three
classical Brownian path construction methods: the forward method (or
step-by-step method), the Brownian bridge construction (a.k.a.
L\'evy-Ciesielski construction) and the principal component analysis (PCA)
construction. In Moskowitz and Caflisch \cite{moskowitz} the convergence of QMC
integration is dramatically improved for some examples from finance by using
the Brownian Bridge construction. Similar results are presented by Acworth et
al.\ \cite{acworth} for the PCA construction. However, Papageorgiou \cite{papa}
shows that there are problems for which the forward construction gives faster
convergence than the Brownian bridge construction. He further shows that every
linear construction method for a one-dimensional Brownian path corresponds to a
unique orthogonal transform of the $\R^d$. Subsequently, he formulates an
equivalence principle that roughly states that every construction that is good
for one problem is bad for another problem. Wang and Sloan \cite{wang} give a
more general version of that equivalence principle.

Due to the equivalence principle, the choice of the orthogonal transform should
depend on the integration problem, as given by the function $g$. Leobacher
\cite{leobacher}, proposes to restrict this search to  transforms which can be
applied using $\mathcal{O}\left(d\log(d)\right)$ operations. In Imai and Tan
\cite{imaitan07} as well as in Irrgeher and Leobacher \cite{il12,il13} methods
are presented to find, for a given problem, an (in some sense) optimal
orthogonal transform. In both approaches the integration problem is linearized
and the ``optimal'' transform is determined for the linear problem. Imai and
Tan use a Taylor expansion of order $1$ for the linearization step whereas in
Irrgeher and Leobacher \cite{il12} a linear regression is performed. The idea
behind both methods is to minimize the effective dimension, as defined by
Caflisch, Morokoff, and Owen \cite{caflisch97}. 

While the concept of effective dimension has its merits in explaining the
effectiveness of deterministic methods for high-dimensional problems, it also
does have some drawbacks. One of those drawbacks is that it does not interact
smoothly with orthogonal transforms. For example, consider some continuous
function $f:\R\longrightarrow\R$ with $\int_\R f(x)^2\varphi(x)dx<\infty$ and
$\int_\R f(x)\varphi(x)dx=0$, where
$\varphi(x)=(2\pi)^{-\frac{1}{2}}\exp(-\frac{x^2}{2})$ is the standard normal
density. Then the function $g:\R^2\longrightarrow\R$ defined by
$g(x_1,x_2):=f(x_1)$ is continuous with $\int_{\R^2}
g(x_1,x_2)^2\varphi(x_1)\varphi(x_2)dx_1dx_2<\infty$ and $\int_{\R^2}
g(x_1,x_2)\varphi(x_1)\varphi(x_2)dx_1dx_2=0$.  The ANOVA decomposition of $g$
is given by $g_{\{1\}}=f$, $g_\emptyset=g_{\{2\}}=g_{\{1,2\}}\equiv 0$. On the
other hand, consider $\tilde g=g\circ U$, where $U$ is some orthogonal
transform of the $\R^2$. Then $\tilde g(x_1,x_2)=f(c x_1+s x_2)$ with
$c^2+s^2=1$, and therefore we have $\tilde g_{\{1,2\}}=\tilde g$, and $\tilde
g_\emptyset=\tilde g_{\{1\}}=\tilde g_{\{2\}}\equiv 0$, for every choice of
$c,s$ with $|c|\ne 1$ and $|s|\ne 1$. Thus even  the slightest rotation
transforms a function of effective dimension 1 into one with effective
dimension 2 (both in the truncation and in the superposition sense). It is not
hard to construct multidimensional examples where a slight orthogonal transform
effects an arbitrary change in effective dimension.

We have a competing explanation for the efficiency of QMC, namely the theory of
weighted spaces as proposed by Sloan and Wo\'zniakowski \cite{sw98}. They
introduce weighted norms on Sobolev spaces that assign different degrees of
importance to different coordinates. The idea is related to the concept of
effective dimension in that both concepts concentrate on problems for which
only relatively few input parameters are really important. From the point of
view of orthogonal transforms it is now interesting to ask whether one can
concatenate the original function with an orthogonal transform in a way that
makes the weighted norm of the resulting function -- and thus the integration
error -- small. Before we proceed with this program, we need to go back one
step. 

We are interested in analyzing the error which occurs by approximating the
expected value $\E(g(B))$ by using a QMC algorithm $Q_{n,d}$, where $d$ gives
the dimension of the discretized space and $n$ is the number of integration
nodes. The error is given by \begin{align*}
\err:=\big\vert\E(g(B))-Q_{n,d}\big\vert.
\end{align*}
As mentioned earlier, the function $g$ has to be approximated by a $d=m\times
d_0$-dimensional function $g_{d_0}$ which depends on a discrete $m$-dimensional
Brownian path. Then there further exists a function $f_d:\R^d\longrightarrow\R$
such that $f_d(X)=g_{d_0}\big((B_{T/d_0},B_{2T/d_0},\ldots,B_T)\big)$ where
$X=(X_1,\ldots,X_d)$ is a standard Gaussian vector. Hence, \begin{align}
\err&=\big\vert\E(g(B))-\E(f_d(X))+\E(f_d(X))-Q_{n,d}\big\vert\nonumber\\
&\leq \underset{=:\err_{\disc}}{\underbrace{\big\vert\E(g(B))-\E(f_d(X))\big\vert}}
+\underset{=:\err_{\integ}}{\underbrace{\big\vert\E(f_d(X))-Q_{n,d}\big\vert}}\label{eq:errorbound1}\,.
\end{align}

$\err_\disc$ is called the \emph{discretization error} and $\err_\integ$ is called the \emph{integration error}. That means, we can bound
the total error by the sum of the error coming from the discretization and the
error which comes from the QMC integration. Now it is obvious that we prefer to
approximate the function $g$ by a function $g_{d_0}$ or $f_d$, respectively,
such that the discretization error $\err_\disc$ becomes small. But it is
equally important that $\err_\integ$ does not explode with growing $d$.

An important practical example is the case where $g(B)$ is described by a
solution of a stochastic differential equation (SDE), e.g., $g(B)=\psi(X_T)$
where $X$ is a stochastic process that solves \begin{align*}
X_0&=x_0\\
dX_t&=a(t,X_t)dt+b(t,X_t)dB_t
\end{align*}
where $a$ is a $\R^k$-valued function and $b$ is a $\R^{k\times m}$-valued
function. There are many different discretization methods in the theory of
stochastic differential equation which can be applied to $g(B)$. The two
best-known are the Euler-Maruyama method and the Milstein method, but there are
also higher-order Runge-Kutta methods. For more information about these
discretization methods we refer to Kloeden and Platen \cite{kloeden} which also
provides an extensive analysis of the convergence of these methods. In
\cite{kloeden} it is shown that, under mild conditions on the coefficient
functions $a$ and $b$, as well as  the ``payoff function'' $\psi$, we have an
upper bound on the discretization error of the form
\begin{align}\label{eq:discerrorbound}
\err_{\disc}=\big\vert\E(g(B))-\E(f_d(X))\big\vert\leq c d^{-q}\,,
\end{align}
where $c>0$ is constant and $q>0$ is the convergence rate which depends on the
discretization method used and on the function $\psi$. In Glasserman
\cite[Chapter 6]{glasserman} it is discussed how these discretization methods
are applied to problems coming from finance.  So for this kind of problems
\eqref{eq:discerrorbound} provides us with an estimate on $\err_\disc$ in
inequality \eqref{eq:errorbound1}.

The emphasis of this paper lies on the analysis of the integration error
$\err_\integ$ to get an upper bound on the total error. Moreover, we are
interested in the behavior of the integration error with respect to $d$,
because we know that the discretization error can only be reduced by increasing
the dimension $d$. This will lead us to the study of QMC-tractability.  In
particular, we want to know how orthogonal transforms affect the weighted norm
and whether the growth of the complexity of integration can be moderated by the
use of suitable orthogonal transforms. To that end we consider special spaces
of integrable functions on the $\R^d$ which we call {\em Hermite spaces}. Those
spaces are spanned by Hermite polynomials, which enjoy a certain invariance
under orthogonal transforms.  It turns out that this class of spaces contains
examples of weighted spaces for which tractability can be proven and which are
sufficiently rich to contain interesting functions.

The remainder of the paper is organized in the following way. We recall the basic facts about Gaussian measures and Hermite polynomials in Section \ref{sec:hermite} and we discuss the Hermite expansion of functions. In one of the main parts of the paper, namely in Section \ref{sec:integration}, we introduce Hermite spaces, present basic properties and discuss multivariate integration in these spaces. Furthermore, we investigate the tractability of QMC methods in these spaces and we discuss what kind of functions belong to Hermite spaces. In the second main part, Section \ref{sec:orthogonal}, we show that orthogonal transformations can be used to reduce the weighted norm of a function,  thus improving the convergence of a QMC algorithm. We further give a representation of the operator $\cA_U$ on the Hermite space that is induced by an orthogonal transform $U$ of the $\R^d$ in terms of direct sums of tensor powers of $U$. Finally, Section \ref{sec:conclusion} summarizes the most important results and states some open problems.

\section{Hermite polynomials and Hermite expansion}\label{sec:hermite}

In this section we recall some basic definitions and results concerning  Gaussian measure, the Hermite polynomials and the Hermite expansion. 

\subsection{Gaussian measure}\label{ssec:gauss}

\begin{definition}
The \emph{standard Gaussian measure} is a Borel probability measure on the $\R^d$ with density $\varphi_d:\R^d\longrightarrow\R$ given by
\begin{align*}
\varphi_d(\x)=(2\pi)^{-\frac{d}{2}}\,e^{-\frac{\x\tr\x}{2}}
\end{align*}
with respect to the $d$-dimensional Lebesgue measure.
\end{definition}

The notation $\x\tr\x$, as it is used in the above definition, denotes the inner product in the Euclidean space, i.e., for any $\x,\y\in\R^d$ we have $\x\tr\y=\sum_{k=1}^{d}x_ky_k$ with $\x=(x_1,\ldots,x_d)$ and $\y=(y_1,\ldots,y_d)$. We will exclusively work with the standard Gaussian measure. A more general discussion about Gaussian measures can be found in Bogachev \cite{bogachev}. Furthermore, we will write $\varphi$ instead of $\varphi_1$ in the one-dimensional case.

We say that a measurable function $f:\R^d\longrightarrow\R$ is Gaussian square-integrable if 
\begin{align*}
\int_{\R^d}f(\x)^2\varphi_d(\x)d\x<\infty\,,
\end{align*}
and we denote the linear space of Gaussian square-integrable functions by
$\cL^2(\R^d,\varphi_d)$.

Moreover, we say that two functions $f$ and $g$ are equivalent if $f=g$ almost everywhere. Then, the space of all equivalence classes of Gaussian square-integrable functions on the $\R^d$, denoted by $L^2(\R^d,\varphi_d)$, forms a Hilbert space with inner product
\begin{align*}
\langle f,g\rangle_{L^2(\R^d,\varphi_d)}=\int_{\R^d}f(\x)g(\x)\varphi_d(\x)d\x.
\end{align*}
The corresponding norm is induced by the inner product, i.e., $\|f\|_{L^2(\R^d,\varphi_d)}=\sqrt{\langle f,f\rangle_{L^2(\R^d,\varphi_d)}}$.

\subsection{Hermite polynomials}\label{ssec:hermite}

Before we introduce the multivariate Hermite polynomials, we recall multi-index notation: a $d$-dimensional {\em multi-index} is a $d$-tuple $\k=(k_1,\ldots,k_d)$ with nonnegative integer entries. We use the following conventions for $\k,\j\in\N_0^d$ and $\x\in\R^d$:
\begin{enumerate}[(i)]
\item $\k\leq \j\quad\Longleftrightarrow\quad k_i\leq j_i$ for all $i=1,\ldots,d$;
\item $\k+\j=(k_1+j_1,\ldots,k_d+j_d)$;
\item $|\k|=k_1+\ldots+k_d$;
\item $\k!=k_1!\cdots k_d!$;
\item $\x^{\k}=x_1^{k_1}\cdots x_d^{k_d}$.
\end{enumerate}

There are some slightly different definitions of Hermite polynomials in the literature which are all related, see, e.g., Bogachev \cite{bogachev}, Sansone \cite{sansone} or Thangavelu \cite{thangavelu}. In this paper we use the definition of Hermite polynomials such that they are the Gram-Schmidt orthonormalization of the polynomials $1, x, x^2, x^3,\ldots$ with respect to the standard Gaussian measure.
This corresponds to the definition used in \cite{bogachev}.

\begin{definition}\label{def:hermite}
For any $k\in\N_0$ we call
\begin{align*}
H_{k}(x)=\frac{(-1)^k}{\sqrt{k!}}\, e^{\frac{x^2}{2}}\frac{d^k}{dx^k}e^{-\frac{x^2}{2}}
\end{align*}
the \emph{$k$-th (univariate) Hermite polynomial} and for any multi-index $\k\in\N_0^d$ the \emph{(multivariate) Hermite polynomial} $H_{\k}(\x)$ with $\x\in\R^d$ is defined by
\begin{align*}
H_{\k}(\x)=\prod_{j=1}^{d}H_{k_j}(x_j).
\end{align*}
\end{definition}

The exponential generating function $G$ of the Hermite polynomials is given by 
\begin{align}\label{eq:generating}
G(\x,\t):=e^{\x\tr\t-\frac{\t\tr\t}{2}}=\sum_{\k\in\N_0^d}H_\k(\x)\frac{\t^\k}{\sqrt{\k!}}
\end{align}
with $\x,\t\in\R^d$, see Bogachev \cite[Chapter 1.3]{bogachev} for the case of $d=1$. The exponential generating function and the representation of the Hermite polynomials via function $G$, i.e., $H_\k(\x)=\sqrt{\k!}\frac{d^\k}{d\t^\k}G(\x,\t)\vert_{\t=\0}$, is a useful tool to work with these kind of polynomials.

\begin{proposition}\label{prop:ONB}
The sequence of Hermite polynomials $\left(H_\k(\x)\right)_{\k\in\N_0^d}$ forms an orthonormal basis of the function space $L^2(\R^d,\varphi_d)$. 
\end{proposition}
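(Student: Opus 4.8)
\noindent\textit{Proof strategy.}
The plan is to treat \emph{orthonormality} and \emph{completeness} separately, the first being a formal computation and the second being the genuine point.

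For \emph{orthonormality} I would exploit the exponential generating function \eqref{eq:generating}. Multiplying $G(\x,\t)$ by $G(\x,\w)$ gives $e^{\x\tr(\t+\w)-(\t\tr\t+\w\tr\w)/2}$, and since $\int_{\R^d}e^{\x\tr\y}\,\varphi_d(\x)\,d\x=e^{\y\tr\y/2}$ is just the moment generating function of the standard Gaussian (immediate by completing the square, or coordinatewise from the case $d=1$), one gets $\int_{\R^d}G(\x,\t)G(\x,\w)\,\varphi_d(\x)\,d\x=e^{\t\tr\w}$. Expanding the left-hand side by \eqref{eq:generating} and the right-hand side as $e^{\t\tr\w}=\sum_{\k\in\N_0^d}\t^\k\w^\k/\k!$, and comparing the coefficient of $\t^\k\w^\j$, yields $\langle H_\k,H_\j\rangle_{L^2(\R^d,\varphi_d)}=\delta_{\k\j}$. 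Interchanging summation and integration here is routine since the generating series converge in $L^2(\R^d,\varphi_d)$; alternatively one proves the univariate identity directly from the Rodrigues formula by integration by parts (moving all $k$ derivatives onto a polynomial of degree $<k$ kills the cross terms, and the leading-coefficient term normalizes to $1$) and then tensorizes using $\varphi_d(\x)=\prod_j\varphi(x_j)$ and $H_\k(\x)=\prod_j H_{k_j}(x_j)$.

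For \emph{completeness} it suffices to show that the only $f\in L^2(\R^d,\varphi_d)$ with $\langle f,H_\k\rangle_{L^2(\R^d,\varphi_d)}=0$ for all $\k\in\N_0^d$ is $f=0$ almost everywhere. By the triangular (Gram--Schmidt) structure, $\lin\{H_\k:\k\in\N_0^d\}$ is exactly the space of polynomials, so such an $f$ satisfies $\int_{\R^d}f(\x)\,\x^\k\,\varphi_d(\x)\,d\x=0$ for every $\k$. Consider $F(\t):=\int_{\R^d}f(\x)\,e^{\x\tr\t}\,\varphi_d(\x)\,d\x$ for $\t\in\C^d$; by Cauchy--Schwarz and the fact that $\x\mapsto e^{\x\tr\t}$ lies in $L^2(\R^d,\varphi_d)$ for every $\t\in\C^d$ (Gaussian decay beats exponential growth), $F$ is well defined, and differentiating under the integral sign shows $F$ is entire with Taylor coefficients at $\0$ equal to $\tfrac{1}{\k!}\int_{\R^d}f(\x)\x^\k\varphi_d(\x)\,d\x=0$; hence $F\equiv 0$. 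Restricting to $\t=i\w$, $\w\in\R^d$, shows that the Fourier transform of $f\varphi_d$ (which lies in $L^1(\R^d)$, again by Cauchy--Schwarz) vanishes identically, so $f\varphi_d=0$ a.e., and since $\varphi_d>0$ everywhere, $f=0$ a.e. Together with orthonormality this gives that $(H_\k)_{\k\in\N_0^d}$ is a complete orthonormal system, i.e.\ an orthonormal basis of $L^2(\R^d,\varphi_d)$.

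The \textbf{main obstacle} is the completeness step: orthonormality is bookkeeping, whereas density of polynomials in $L^2(\R^d,\varphi_d)$ really uses the super-exponential decay of the Gaussian weight (it fails for heavy-tailed weights), and one must be careful that $F$ is genuinely entire so that vanishing of all derivatives at $\0$ forces $F\equiv0$. A possible alternative to the Fourier argument is to approximate $f$ first by bounded compactly supported functions and then apply Stone--Weierstrass plus a Gaussian-tail truncation estimate, but the quantitative control of the $L^2(\varphi_d)$-error still rests on the same decay property.
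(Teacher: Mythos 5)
Your argument is correct, but it diverges from the paper in an organizational sense: the paper does not prove Proposition \ref{prop:ONB} at all, it simply cites Bogachev (Lemma 1.3.2 and Corollary 1.3.3), whereas you supply a self-contained proof. Your proof is essentially the classical one (and close to what the cited reference does): orthonormality via the generating-function identity $\int_{\R^d}G(\x,\t)G(\x,\w)\varphi_d(\x)\,d\x=e^{\t\tr\w}$, and completeness by showing that $f\perp\lin\{H_\k\}=\{\text{all polynomials}\}$ forces the entire function $F(\t)=\int_{\R^d}f(\x)e^{\x\tr\t}\varphi_d(\x)\,d\x$ to vanish identically, hence the Fourier transform of $f\varphi_d\in L^1(\R^d)$ is zero and $f=0$ a.e.; you correctly isolate completeness as the step where the super-exponential Gaussian decay is indispensable. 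Two small points you would need to write out to make this airtight: (i) the interchange of summation and integration in the generating-function computation (easiest done coordinatewise for $d=1$ and then tensorized via $\varphi_d(\x)=\prod_j\varphi(x_j)$, $H_\k(\x)=\prod_j H_{k_j}(x_j)$, exactly as in your Rodrigues alternative), and (ii) the identification $\lin\{H_\k:|\k|\le m\}=\{\text{polynomials of degree}\le m\}$, which follows from the triangular structure since $H_k$ has degree exactly $k$ with nonzero leading coefficient. What each route buys: the paper's citation keeps the exposition short, while your version makes the result self-contained and makes visible exactly why the analogous statement can fail for heavy-tailed weights, which is a useful remark in the context of integration on $\R^d$.
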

\begin{proof}
A proof can be found in Bogachev \cite{bogachev}, Lemma 1.3.2. and Corollary 1.3.3.
\end{proof}

Let $e_i=(0,\ldots,0,1,0,\ldots,0)\in\N_0^d$ with the $1$ at the $i$-th entry. Then for $\k,\ellb\in\N_0^d$ and for $\x,\y \in\R^d$ we have the following properties:
\begin{enumerate}[(i)]
	\item\label{lem:hermite:cramer} $\left\vert H_\k(\x)\varphi_d(\x)^{\frac{1}{2}}\right\vert\leq\mu^d \left(2\pi\right)^{-\frac{d}{4}} \leq 1$ with constant $\mu$ smaller than $1.086435$ (Cramer's bound);
	\item\label{lem:hermite:rec} $H_{\k+e_i}(\x)=\frac{1}{\sqrt{k_{i}+1}}\left(x_iH_{\k}(\x)-\frac{\partial}{\partial x_i}H_{\k}(\x)\right)$;
	\item\label{lem:hermite:deriv} $\frac{\partial^{\vert \ellb\vert}}{\partial \x^\ellb}H_{\k}(\x)=\begin{cases}\sqrt{\frac{\k!}{(\k-\ellb)!}} H_{\k-\ellb}(\x) & \textnormal{if } \k\geq\ellb;\\ 0 & \textnormal{otherwise.}\end{cases}$
\end{enumerate}
This formulation of Cramer's bound follows from the slightly different statement for an alternative definition of the Hermite polynomials in Sansone \cite{sansone} Section 4.5, where the author refers to Cramer \cite{cramer}. The properties \eqref{lem:hermite:rec} and \eqref{lem:hermite:deriv} are easily deduced from the Definition \ref{def:hermite} and \eqref{eq:generating}. Next we define a first order differential operator $\cD_{\!x}$ as
\begin{align}\label{eq:diffop}
\cD_{\!x}:=\frac{\partial}{\partial x}-x,
\end{align}
which is motivated by the above Property \eqref{lem:hermite:rec}. Moreover, we have that $-\cD_{\!x_i}$ is the adjoint operator of $\frac{\partial}{\partial x_i}$ with respect to the $L^2(\R^d,\varphi_d)$ inner product,
\begin{align*}
\int_{\R^d}\frac{\partial}{\partial x_i}\big(H_\k(\x)\big)H_\ellb(\x)\varphi_d(\x)d\x&=-\int_{\R^d}H_\k(\x)\frac{\partial}{\partial x_i}\left(H_\ellb(\x)\varphi_d(\x)\right)d\x\\
&=-\int_{\R^d}H_\k(\x)\left(\frac{\partial}{\partial x_i}H_\ellb(\x)-x_iH_\ellb(\x)\right)\varphi_d(\x)d\x\\
&=-\int_{\R^d}H_\k(\x)\cD_{\!x_i}\big(H_\ellb(\x)\big)\varphi_d(\x)d\x.
\end{align*}
Recall that we have for the differential operator $\cD=(\cD_{\!x_1},\ldots,\cD_{\!x_d})$ the multi-index notation $\cD^\ellb=\cD_{\!x_1}^{\ell_1}\cdots \cD_{\!x_d}^{\ell_d}$ for any $\ellb\in\N_0^d$.

\subsection{Hermite expansion}

Since we know from Proposition \ref{prop:ONB} that the Hermite polynomials form an orthonormal basis of $L^2(\R^d,\varphi_d)$, we can write any function $f\in L^2(\R^d,\varphi_d)$ as a Hermite series, i.e.,
\begin{align}\label{eq:hermiteexpansion}
f(\x)=\sum_{\k\in\N_0^d}\hat{f}(\k)H_\k(\x)
\end{align}
where the sum converges in the $L^2(\R^d,\varphi_d)$-norm and $\hat{f}(\k)$ is the $\k$-th Hermite coefficient given by
\begin{align*}
\hat{f}(\k)=\int_{\R^d}f(\x)H_\k(\x)\varphi_d(\x)d\x.
\end{align*}
Note that the $\k$-th Hermite coefficient exists for every $f\in L^2(\R^d,\varphi_d)$, because
\begin{align*}
\int_{\R^d}|f(\x)H_\k(\x)\varphi_d(\x)|d\x\leq \left(\int_{\R^d}f(\x)^2\varphi_d(\x)d\x\right)^{1/2}\left(\int_{\R^d}H_\k(\x)^2\varphi_d(\x)d\x\right)^{1/2}<\infty.
\end{align*}
Furthermore we can show that the Hermite expansion is unique under the assumption of continuity.
\begin{proposition}\label{prop:unique}
Let $f\in L^2(\R^d,\varphi_d)$ be a continuous function. If $\hat{f}(\k)=0$ for all $\k\in\R^d$, then $f\equiv 0$.
\end{proposition}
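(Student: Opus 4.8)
The plan is to combine the completeness of the Hermite system with a standard topological argument. First I would invoke Proposition~\ref{prop:ONB}: since $\left(H_\k\right)_{\k\in\N_0^d}$ is an orthonormal basis of $L^2(\R^d,\varphi_d)$ and the hypothesis says $\hat f(\k)=\langle f,H_\k\rangle_{L^2(\R^d,\varphi_d)}=0$ for every multi-index $\k\in\N_0^d$, Parseval's identity gives $\|f\|_{L^2(\R^d,\varphi_d)}^2=\sum_{\k\in\N_0^d}|\hat f(\k)|^2=0$. Hence $f=0$ as an element of $L^2(\R^d,\varphi_d)$; that is, $f(\x)=0$ for $\varphi_d$-almost every $\x\in\R^d$.

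The second step upgrades this almost-everywhere statement to an everywhere statement, and this is exactly where continuity enters. Since the Gaussian density $\varphi_d$ is strictly positive on all of $\R^d$, a Borel set has $\varphi_d$-measure zero if and only if it has $d$-dimensional Lebesgue measure zero; thus $f$ vanishes Lebesgue-almost everywhere. Suppose, for contradiction, that $f(\x_0)\neq 0$ for some $\x_0\in\R^d$. By continuity of $f$ there is an open ball $B$ centered at $\x_0$ on which $|f|>\tfrac{1}{2}|f(\x_0)|>0$. But $B$ has positive Lebesgue measure, so $f$ cannot vanish Lebesgue-a.e.\ on $\R^d$, a contradiction. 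Therefore $f\equiv 0$.

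There is essentially no serious obstacle: the argument is routine once Proposition~\ref{prop:ONB} is available. The only two points deserving a word of care are the equivalence of $\varphi_d$-null sets and Lebesgue-null sets (immediate from the positivity of $\varphi_d$) and the elementary fact that a continuous function vanishing almost everywhere is identically zero. (For completeness one should also read the hypothesis ``$\hat f(\k)=0$ for all $\k\in\R^d$'' as ``for all $\k\in\N_0^d$''.)
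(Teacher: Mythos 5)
Your argument is correct and follows the same route as the paper: Parseval's identity via the orthonormal basis of Hermite polynomials gives $\|f\|_{L^2(\R^d,\varphi_d)}=0$, and continuity (together with the strict positivity of $\varphi_d$) upgrades the almost-everywhere vanishing to $f\equiv 0$. Your write-up merely spells out the final continuity step, which the paper leaves as a one-line remark, and correctly notes the typo $\k\in\R^d$ for $\k\in\N_0^d$.
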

\begin{proof}
We know that $f(\x)=\sum_{\k\in\N_0^d}\hat{f}(\k)H_\k(\x)$ holds in the $L^2(\R^d,\varphi_d)$ sense and thus, using the Parseval identity
\begin{align*}
\int_{\R^d}f(\x)^2\varphi_d(\x)d\x=\sum_{\k\in\N_0^d}\hat{f}(\k)^2=0.
\end{align*}
Since $f$ is continuous, we get that $f\equiv 0$.
\end{proof}

\begin{corollary}
Let $f,g\in L^2(\R^d,\varphi_d)$ be continuous functions with the same Hermite coefficients. Then $f=g$.
\end{corollary}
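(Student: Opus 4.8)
The plan is to reduce the statement to Proposition \ref{prop:unique} by passing to the difference $h := f - g$. First I would observe that $L^2(\R^d,\varphi_d)$ is a linear space and that the set of continuous functions is closed under subtraction, so $h$ is again a continuous element of $L^2(\R^d,\varphi_d)$. Next, since the map $f\mapsto\hat f(\k)$ is given by the integral $\int_{\R^d}f(\x)H_\k(\x)\varphi_d(\x)\,d\x$, it is linear in $f$; hence $\hat h(\k)=\hat f(\k)-\hat g(\k)=0$ for every multi-index $\k\in\N_0^d$, by the hypothesis that $f$ and $g$ share the same Hermite coefficients.

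Then I would invoke Proposition \ref{prop:unique} directly: a continuous function in $L^2(\R^d,\varphi_d)$ all of whose Hermite coefficients vanish is identically zero. Applying this to $h$ yields $h\equiv 0$, that is, $f\equiv g$ pointwise on $\R^d$, which is the claim.

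There is essentially no obstacle here; the corollary is an immediate linearization of the preceding proposition. The only point worth stating explicitly is the linearity of the Hermite coefficient functional and the fact that continuity is preserved under taking differences — both routine — after which Proposition \ref{prop:unique} does all the work. (One could even remark that the conclusion upgrades the almost-everywhere equality of Parseval's identity to genuine equality precisely because of the continuity assumption, exactly as in the proof of Proposition \ref{prop:unique}.)
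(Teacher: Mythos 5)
Your argument is correct and is exactly the paper's proof: the paper also applies Proposition \ref{prop:unique} to the difference $f-g$, relying implicitly on the linearity of the Hermite coefficients and the preservation of continuity under subtraction, which you simply spell out. Nothing further is needed.
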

\begin{proof}
The statement follows by applying Proposition \ref{prop:unique} to $f-g$.
\end{proof}

In general, the Hermite expansion \eqref{eq:hermiteexpansion} of an $L^2(\R^d,\varphi_d)$-function does not need to converge for fixed $\x\in\R^d$, but under further assumptions on the function as well as on its Hermite coefficients the Hermite expansion converges even pointwise.

\begin{proposition}\label{prop:hermiteexpansion}
Let $f\in L^2(\R^d,\varphi_d)$ be continuous with $\sum_{\k\in\N_0^d}|\hat{f}(\k)|<\infty$. Then
\begin{align*}
f(\x)=\sum_{\k\in\N_0^d}\hat{f}(\k)H_\k(\x)
\end{align*}
for all $\x\in\R^d$.
\end{proposition}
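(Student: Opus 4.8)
The plan is to prove the pointwise identity in three steps: first, show that the series $\sum_{\k\in\N_0^d}\hat f(\k)H_\k(\x)$ converges absolutely for each fixed $\x$ and even uniformly on compact subsets of $\R^d$; second, conclude from this that its sum $\tilde f$ is a continuous function on $\R^d$; third, identify $\tilde f$ with $f$ by comparing this pointwise convergence with the $L^2(\R^d,\varphi_d)$-convergence of the Hermite expansion.

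For the first two steps I would invoke Cramer's bound in the form $|H_\k(\x)|\le\varphi_d(\x)^{-1/2}$, which is immediate from property~\eqref{lem:hermite:cramer}. Since $\varphi_d(\x)^{-1/2}=(2\pi)^{d/4}e^{\x\tr\x/4}$ is continuous, it is bounded on any compact set $K\subset\R^d$, say by $C_K$; hence for every finite $F\subset\N_0^d$,
\[
\sup_{\x\in K}\Bigl|\sum_{\k\in F}\hat f(\k)H_\k(\x)\Bigr|\le C_K\sum_{\k\in F}|\hat f(\k)|\le C_K\sum_{\k\in\N_0^d}|\hat f(\k)|<\infty,
\]
and, since $\sum_{\k}|\hat f(\k)|<\infty$, the tail $\sum_{\k\notin F}|\hat f(\k)|$ can be made arbitrarily small by taking $F$ large. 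By the Cauchy criterion for uniform convergence, the partial sums (with respect to any fixed enumeration of $\N_0^d$) converge uniformly on $K$; as $K$ is arbitrary and each partial sum is a continuous polynomial, the limit $\tilde f(\x):=\sum_{\k\in\N_0^d}\hat f(\k)H_\k(\x)$ is a well-defined continuous function on $\R^d$, whose value is independent of the enumeration because the series is absolutely convergent.

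For the third step, note that by Proposition~\ref{prop:ONB} and Parseval's identity the very same partial sums converge to $f$ in the norm of $L^2(\R^d,\varphi_d)$; consequently some subsequence of them converges to $f$ $\varphi_d$-almost everywhere, hence Lebesgue-almost everywhere. Since that same subsequence converges pointwise everywhere to $\tilde f$, we obtain $f=\tilde f$ almost everywhere, and because $f$ and $\tilde f$ are both continuous, $f=\tilde f$ on all of $\R^d$, which is the assertion. (Alternatively, dominated convergence with the $L^1$-majorant $\bigl(\sum_\k|\hat f(\k)|\bigr)\varphi_d^{1/2}$ shows that $\hat{\tilde f}=\hat f$, and the corollary to Proposition~\ref{prop:unique} then gives $\tilde f=f$.)

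No serious obstacle is expected here; the only point that needs a little care is that $\N_0^d$ carries no canonical linear order, so one must use that absolute convergence makes the pointwise sum independent of the enumeration and that the $L^2$-expansion of Proposition~\ref{prop:ONB} converges unconditionally, before passing from $L^2$-convergence to an almost-everywhere convergent subsequence and invoking continuity of both sides.
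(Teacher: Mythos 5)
Your proof is correct and takes essentially the same route as the paper: Cramer's bound together with $\sum_\k|\hat f(\k)|<\infty$ yields a (locally) uniformly convergent series with continuous sum, which is then identified with $f$ using continuity. The only differences are cosmetic — the paper multiplies by $\varphi_d(\x)^{1/2}$ to get uniform convergence on all of $\R^d$ and identifies the limit with $f$ via equality of Hermite coefficients and Proposition \ref{prop:unique}, while your main argument identifies it through an a.e.-convergent subsequence of the $L^2$-partial sums; your parenthetical alternative is precisely the paper's identification step.
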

\begin{proof}
Because of Cramer's bound, see Property \eqref{lem:hermite:cramer} of the Hermite polynomials, we get
\begin{align*}
\vert\hat{f}(\k)H_\k(\x)\varphi_d(\x)^{\frac{1}{2}}\vert\leq \vert\hat{f}(\k)\vert
\end{align*}
for each $\k\in\N_0^d$. Since $\sum_{\k\in\N_0^d}|\hat{f}(\k)|<\infty$, Weierstrass' uniform convergence theorem, see e.g.\ Rudin \cite{rudin76} Theorem 7.10, states that $\sum_{k\in\N_0^d}\hat{f}(\k)H_\k(\x)\varphi_d(\x)^{\frac{1}{2}}$ converges uniformly towards a function $\bar{f}$. Furthermore, due to the uniform limit theorem, see e.g.\ Rudin \cite{rudin76} Theorem 7.12, we have that $\bar{f}$ is continuous, because $H_\k(\x)\varphi_d(\x)^{\frac{1}{2}}$ is continuous for each $\k\in\N_0^d$. Thus, we even get that
\begin{align*}
\sum_{\k\in\N_0^d}\hat{f}(\k)H_\k(\x)=\bar{f}(\x)\varphi_d(\x)^{-\frac{1}{2}}
\end{align*}
holds for all $\x\in\R^d$. Since $\x\mapsto\bar{f}(\x)\varphi_d(\x)^{-\frac{1}{2}}$ has the same Hermite coefficients as $f$ and both functions are continuous, we know from the uniqueness of the Hermite expansion that $f(\x)=\bar{f}(\x)\varphi_d(\x)^{-\frac{1}{2}}$ for all $\x\in\R^d$. So we end up with 
\begin{align*}
\sum_{\k\in\N_0^d}\hat{f}(\k)H_\k(\x)=f(\x),
\end{align*}
which holds for all $\x\in\R^d$.
\end{proof}

\section{Quasi-Monte Carlo integration in weighted Hermite spaces}\label{sec:integration}

In this section we study quasi-Monte Carlo integration with respect to the Gaussian measure. For that, we set up a reproducing kernel Hilbert space of functions on the $\R^d$ and we further show that under additional assumptions QMC integration in these spaces is polynomially tractable. 

\subsection{Hermite spaces}\label{ssec:hermitespace}

\begin{definition}
Let $\r:\N_0^d\longrightarrow\R^{+}$ be a summable function, i.e., $\sum_{\k\in\N_0^d}\r(\k)<\infty$. For $f\in L^2(\R^d,\varphi_d)$ let
\begin{align*}
\|f\|_\r:=\Bigg(\sum_{\k\in\N_0^d}\r(\k)^{-1}\left\vert\hat{f}(\k)\right\vert^2\Bigg)^{\frac{1}{2}}
\end{align*}
where $\hat{f}(\k)$ is the $\k$-th Hermite coefficient of $f$. Then we call
\begin{align*}
\cH_\r:=\left\{f\in \cL^2(\R^d,\varphi_d)\cap C(\R^d)\,:\, \|f\|_\r<\infty\right\}
\end{align*}
a \emph{Hermite space}.
\end{definition}

Note that $\|\cdot\|_\r$ is a semi-norm on $\cL^2(\R^d,\varphi_d)$, but under the additional assumption, that the functions are continuous as well, it is a norm. Thus, a Hermite space is a Banach space with norm $\|\cdot\|_\r$; in fact it is a Hilbert space when equipped with the inner product
\begin{align*}
\langle f,g\rangle_\r&:=\frac{1}{4}\left(\|f+g\|_\r^2-\|f-g\|_\r^2\right)\\
&=\sum_{\k\in\N_{0}^{d}}\r(\k)^{-1}\hat{f}(\k)\hat{g}(\k).
\end{align*}

\begin{theorem}\label{th:hermitespaceexpansion}
Let $f\in\cH_\r$. Then, the Hermite expansion of $f$ converges pointwise, i.e.,
\begin{align*}
f(\x)=\sum_{\k\in\N_0^d}\hat{f}(\k)H_\k(\x)
\end{align*}
holds for all $\x\in\R^d$.
\end{theorem}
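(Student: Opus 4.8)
The plan is to reduce Theorem \ref{th:hermitespaceexpansion} to Proposition \ref{prop:hermiteexpansion}, which already establishes pointwise convergence of the Hermite expansion for continuous $L^2(\R^d,\varphi_d)$-functions satisfying $\sum_{\k\in\N_0^d}|\hat f(\k)|<\infty$. A function $f\in\cH_\r$ is continuous and lies in $\cL^2(\R^d,\varphi_d)$ by definition of the Hermite space, so the only thing that needs to be checked is the absolute summability of the Hermite coefficients. Once that is in hand, Proposition \ref{prop:hermiteexpansion} applies verbatim and the theorem follows.

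The key step is therefore to show $\sum_{\k\in\N_0^d}|\hat f(\k)|<\infty$ using only $\|f\|_\r<\infty$ and the summability of $\r$. The natural tool is the Cauchy--Schwarz inequality, applied with the splitting $|\hat f(\k)| = \r(\k)^{-1/2}|\hat f(\k)|\cdot\r(\k)^{1/2}$. This gives
\begin{align*}
\sum_{\k\in\N_0^d}|\hat f(\k)|
= \sum_{\k\in\N_0^d}\big(\r(\k)^{-1/2}|\hat f(\k)|\big)\big(\r(\k)^{1/2}\big)
\leq \Bigg(\sum_{\k\in\N_0^d}\r(\k)^{-1}|\hat f(\k)|^2\Bigg)^{1/2}\Bigg(\sum_{\k\in\N_0^d}\r(\k)\Bigg)^{1/2}
= \|f\|_\r\Bigg(\sum_{\k\in\N_0^d}\r(\k)\Bigg)^{1/2}.
\end{align*}
Both factors on the right are finite: the first because $f\in\cH_\r$, the second because $\r$ is summable by the standing assumption in the definition of a Hermite space. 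Hence $\sum_{\k\in\N_0^d}|\hat f(\k)|<\infty$.

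With this bound established, the proof concludes by invoking Proposition \ref{prop:hermiteexpansion}: since $f$ is continuous, Gaussian square-integrable, and has absolutely summable Hermite coefficients, its Hermite expansion converges to $f(\x)$ for every $\x\in\R^d$.

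I do not expect any genuine obstacle here; the statement is essentially a corollary packaging of the earlier machinery. The one point that deserves a word of care is the use of Cauchy--Schwarz for sums over the (countable) index set $\N_0^d$ with possibly infinitely many nonzero terms — this is legitimate since all summands are nonnegative, so the inequality holds in $[0,\infty]$ and the finiteness of the right-hand side then forces finiteness of the left. Everything else is a direct citation of Proposition \ref{prop:hermiteexpansion}.
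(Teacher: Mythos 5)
Your proposal is correct and follows essentially the same route as the paper: split $|\hat f(\k)|=\r(\k)^{1/2}\cdot\r(\k)^{-1/2}|\hat f(\k)|$, apply Cauchy--Schwarz together with the summability of $\r$ to get $\sum_{\k\in\N_0^d}|\hat f(\k)|\le\big(\sum_{\k\in\N_0^d}\r(\k)\big)^{1/2}\|f\|_\r<\infty$, and then invoke Proposition \ref{prop:hermiteexpansion}. No gaps; your remark about Cauchy--Schwarz for nonnegative series is a fine (if implicit in the paper) point of care.
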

\begin{proof}
We have that $f$ is a Gaussian square-integrable and continuous function. Moreover, we get from the Cauchy-Schwarz inequality 
\begin{align*}
\sum_{\k\in\N_0^d}\vert\hat{f}(\k)\vert&=\sum_{\k\in\N_0^d}\vert\r(\k)^{\frac{1}{2}}\r(\k)^{-\frac{1}{2}}\hat{f}(\k)\vert\\
&\leq \left(\sum_{\k\in\N_0^d}\r(\k)\right)^{\frac{1}{2}}\left(\sum_{\k\in\N_0^d}\r(\k)^{-1}\hat{f}(\k)^2\right)^{\frac{1}{2}}\\
&=\left(\sum_{\k\in\N_0^d}\r(\k)\right)^{\frac{1}{2}}\|f\|_{\r}<\infty,
\end{align*}
because $\r$ is summable. The statement now follows from Proposition \ref{prop:hermiteexpansion}.
\end{proof}

We shall show shortly that a Hermite space is always a reproducing kernel Hilbert space. For that, we first recall the definition of a reproducing kernel Hilbert space as well as some important properties.

\begin{definition}
A Hilbert space $\cH$ of functions $f:X\longrightarrow \R$ on a set $X$ with inner product $\langle\cdot,\cdot\rangle$ is called a reproducing kernel Hilbert space, if there exists a function $K:X\times X\longrightarrow\R$ such that
\begin{enumerate}[(i)]
	\item\label{it:kernel} $K(\cdot,y)\in\cH$ for each fixed $y\in X$ and
	\item\label{it:repro} $\langle f,K(\cdot,y)\rangle=f(y)$ for each fixed $y\in X$ and for all $f\in \cH$ (reproducing property).
\end{enumerate}
\end{definition}
In this definition we use the notation $K(\cdot,y)$ to indicate that $K$ is a function of the first variable, i.e., $x\mapsto K(x,y)$, and the inner product $\langle f,K(\cdot,y)\rangle$ is taken with respect to the first variable of $K$. The function $K$ is called reproducing kernel and is unique: Assume there is another function $\tilde{K}(\cdot,y)\in\cH$ satisfying the reproducing property. Then, with the reproducing property of $K$ and $\tilde{K}$ we get
\begin{align*}
\tilde{K}(x,y)=\langle \tilde{K}(\cdot,y),K(\cdot,x)\rangle=\langle K(\cdot,x),\tilde{K}(\cdot,y)\rangle=K(y,x)=K(x,y)
\end{align*}
where the last equality holds because of the symmetry of the reproducing kernel,
\begin{align*}
K(x,y)=\langle K(\cdot,y),K(\cdot,x)\rangle=\langle K(\cdot,x),K(\cdot,y)\rangle=K(y,x).
\end{align*}
More information about the theory of reproducing kernel Hilbert spaces can be found in Aronszajn \cite{aronszajn50}.

\begin{theorem}\label{th:RKHS}
A Hermite space $\cH_\r$ is a reproducing kernel Hilbert space and the reproducing kernel function $K_\r:\R^d\times\R^d$ is given by
\begin{align}\label{eq:hermitekernel}
K_\r(\x,\y)=\sum_{\k\in\N_0^d}\r(\k)H_\k(\x)H_\k(\y).
\end{align}
\end{theorem}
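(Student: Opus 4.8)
The plan is to verify the two defining axioms of a reproducing kernel Hilbert space directly for the candidate kernel $K_\r$ given in \eqref{eq:hermitekernel}, using the fact (from the definition of $\cH_\r$ and the computation of $\langle\cdot,\cdot\rangle_\r$) that the Hermite coefficients of a function $f\in\cH_\r$ are exactly its coordinates with respect to the orthogonal system $(H_\k)_{\k\in\N_0^d}$, rescaled by $\r(\k)$. First I would fix $\y\in\R^d$ and show that $K_\r(\cdot,\y)\in\cH_\r$. The Hermite coefficient of $\x\mapsto K_\r(\x,\y)$ at the multi-index $\k$ is $\r(\k)H_\k(\y)$, by orthonormality of the $H_\k$ in $L^2(\R^d,\varphi_d)$ (Proposition \ref{prop:ONB}); hence
\begin{align*}
\|K_\r(\cdot,\y)\|_\r^2=\sum_{\k\in\N_0^d}\r(\k)^{-1}\bigl(\r(\k)H_\k(\y)\bigr)^2=\sum_{\k\in\N_0^d}\r(\k)H_\k(\y)^2.
\end{align*}
Using Cramer's bound (Property \eqref{lem:hermite:cramer}), $|H_\k(\y)|\le\varphi_d(\y)^{-1/2}$, so this is bounded by $\varphi_d(\y)^{-1}\sum_\k\r(\k)<\infty$; the same estimate, via the Weierstrass $M$-test as in Proposition \ref{prop:hermiteexpansion}, shows the series defining $K_\r(\cdot,\y)$ converges absolutely and uniformly on compacta, so the sum is continuous and lies in $\cL^2(\R^d,\varphi_d)\cap C(\R^d)$. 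Therefore $K_\r(\cdot,\y)\in\cH_\r$, establishing axiom (i).

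Next I would check the reproducing property (ii). Let $f\in\cH_\r$ with Hermite expansion $f=\sum_\k\hat f(\k)H_\k$; by Theorem \ref{th:hermitespaceexpansion} this holds pointwise, so in particular $f(\y)=\sum_\k\hat f(\k)H_\k(\y)$. On the other hand, using the inner-product formula $\langle f,g\rangle_\r=\sum_\k\r(\k)^{-1}\hat f(\k)\hat g(\k)$ with $g=K_\r(\cdot,\y)$, whose $\k$-th coefficient is $\r(\k)H_\k(\y)$ as computed above, we get
\begin{align*}
\langle f,K_\r(\cdot,\y)\rangle_\r=\sum_{\k\in\N_0^d}\r(\k)^{-1}\hat f(\k)\,\r(\k)H_\k(\y)=\sum_{\k\in\N_0^d}\hat f(\k)H_\k(\y)=f(\y).
\end{align*}
The interchange is justified because the first series converges absolutely: by Cauchy--Schwarz it is bounded by $\|f\|_\r\,\|K_\r(\cdot,\y)\|_\r<\infty$. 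This gives the reproducing property and completes the proof.

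The only genuinely delicate point is the absolute/uniform convergence of the kernel series and the attendant continuity claim in axiom (i); everything else is a bookkeeping argument with coefficients once one has observed that $\widehat{K_\r(\cdot,\y)}(\k)=\r(\k)H_\k(\y)$. I would also remark that uniqueness of the reproducing kernel is already established in the discussion preceding the theorem, so nothing further is needed there; and that Theorem \ref{th:hermitespaceexpansion} is exactly what lets us evaluate $f(\y)$ termwise rather than merely in the $L^2$ sense.
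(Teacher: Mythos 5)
Your proposal is correct and follows essentially the same route as the paper's proof: identify $\widehat{K_\r(\cdot,\y)}(\k)=\r(\k)H_\k(\y)$, bound $\|K_\r(\cdot,\y)\|_\r$ via Cramer's bound and the summability of $\r$, and then verify the reproducing property through the coefficient formula for $\langle\cdot,\cdot\rangle_\r$. Your additional remarks on uniform convergence, continuity of the kernel section, and the explicit appeal to Theorem \ref{th:hermitespaceexpansion} for the pointwise evaluation only make explicit details the paper leaves implicit.
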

\begin{proof}
First, we show that $K_\r(\cdot,\y)$ belongs to the Hermite space $\cH_\r$ for each $\y\in\R^d$. The $\k$-th Hermite coefficient of $K_\r$ as a function in $\x$ is
\begin{align*}
\widehat{K_\r(\cdot,\y)}(\k)&=\int_{\R^d}K_\r(\x,\y)H_\k(\x)\varphi_d(\x)d\x\\
&=\sum_{\j\in\N_0^d}\r(\j)\int_{\R^d}H_\j(\x)H_\k(\x)\varphi_d(\x)d\x\,H_\j(\y)\\
&=\r(\k)H_\k(\y).
\end{align*}
So we get that
\begin{align*}
\|K_\r(\cdot,\y)\|_\r^2&=\sum_{\k\in\N_0^d}\r(\k)^{-1}\left(\widehat{K_\r(\cdot,\y)}(\k)\right)^2\\
&=\sum_{\k\in\N_0^d}\r(\k)^{-1}\r(\k)^2H_\k(\y)^2\\
&\leq c e^{\frac{\y\tr\y}{2}}\sum_{\k\in\N_0^d}\r(\k)<\infty,
\end{align*}
where Cramer's bound is used to estimate $H_\k(\y)$ by $c e^{\frac{\y\tr\y}{2}}$ with positive constant $c$. Next we have to verify that $K_\r$ satisfies the reproducing property. For any $f\in\cH_\r$ we have that
\begin{align*}
\langle f, K_\r(\cdot,\y)\rangle_\r &= \sum_{\k\in\N_0^d}\r(\k)^{-1}\hat{f}(\k)\widehat{K_\r(\cdot,\y)}(\k)\\
&=\sum_{\k\in\N_0^d}\r(\k)^{-1}\hat{f}(\k)\r(\k)H_\k(\y)\\
&=\sum_{\k\in\N_0^d}\hat{f}(\k)H_\k(\y)\\
&=f(\y)
\end{align*}
holds. Thus, the function $K_\r$, given by \eqref{eq:hermitekernel}, is indeed the reproducing kernel of the Hermite space $\cH_\r$ and consequently, $\cH_\r$ is a reproducing kernel Hilbert space.
\end{proof}

It is well-known that function evaluation in a reproducing kernel Hilbert space is a continuous linear functional on that space. 

We give two examples of Hermite spaces where the function $\r$ controls both the decay of the Hermite coefficients of the functions and the influence of the coordinates of the variables.

\subsubsection{Weighted Hermite spaces with polynomially decaying coefficients}

Let $\alpha>1$ and $\gamma>0$. Then we define a function $p_{\alpha,\gamma}:\N_0\longrightarrow\R^{+}$ as
\begin{align*}
p_{\alpha,\gamma}(k):=\begin{cases}1 & \textnormal{if } k=0,\\ \gamma k^{-\alpha} & \textnormal{if } k\neq 0\end{cases}
\end{align*}
and we note that
\begin{align*}
\sum_{k=0}^{\infty}p_{\alpha,\gamma}(k)=1+\gamma\sum_{k=1}^{\infty}k^{-\alpha}=1+\gamma\zeta(\alpha)<\infty
\end{align*}
where $\zeta$ is the Riemann zeta function. Since $p_{\alpha,\gamma}$ is summable, we can set $\r=p_{\alpha,\gamma}$ and consider the Hermite space $\cH_{p_{\alpha,\gamma}}$ of functions on the $\R$. Then, the norm of $\cH_{p_{\alpha,\gamma}}$ can be written as
\begin{align*}
\|f\|_{p_{\alpha,\gamma}}^2=1+\gamma^{-1}\sum_{k=1}^{\infty}k^{\alpha}\hat{f}(k)^2.
\end{align*}
Hence, the Hermite coefficients of $f$ have to decay polynomially with a rate of $o(k^{-\alpha/2})$ so that the norm becomes finite and, consequently, the function $f$ belongs to the Hermite space $\cH_{p_{\alpha,\gamma}}$. According to Theorem \ref{th:RKHS} the reproducing kernel of $\cH_{p_{\alpha,\gamma}}$ is of the form
\begin{align*}
K_{p_{\alpha,\gamma}}(x,y)=1+\gamma\sum_{k=1}^{\infty}k^{-\alpha}H_k(x)H_k(y).
\end{align*}

The next theorem states sufficient conditions for a function to be in the Hermite space $\cH_{p_{\alpha,\gamma}}$ of functions on the $\R$. These conditions concern both the smoothness and the asymptotic behavior of the function at infinity. The theorem further gives a relation between the ``smoothness parameter'' $\alpha$ of the Hermite space and the actual smoothness of the function.

\begin{theorem}\label{th:smoothness}
Let $\beta>2$ be an integer and $f:\R\longrightarrow\R$ be a $\beta$-times differentiable function satisfying
\begin{enumerate}[(i)]
\item $\cD_x^jf(x)\varphi(x)^{\frac{1}{2}}$ is Lebesgue integrable for each $j\in\{1,\ldots,\beta\}$ and
\item\label{infbehavior} $\cD_x^jf(x)=O\big(e^{x^2/(2c)}\big)$ as $|x|\rightarrow\infty$ for each $j\in\{0,\ldots,\beta-1\}$ and some $c>1$,
\end{enumerate}
where $\cD_x$ is the differential operator defined by \eqref{eq:diffop}. Then $f\in\cH_{p_{\alpha,\gamma}}$ for any  $\alpha\in(1,\beta-1)$ and any $\gamma>0$.
\end{theorem}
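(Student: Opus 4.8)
The plan is to bound the (one–dimensional) Hermite coefficients $\hat f(k)$ of $f$ by $O(k^{-\beta/2})$ as $k\to\infty$ and to deduce $\|f\|_{p_{\alpha,\gamma}}<\infty$ from the convergence of $\sum_{k\geq1}k^{\alpha-\beta}=\zeta(\beta-\alpha)$, which holds precisely because $\alpha<\beta-1$.

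The central step is an integration–by–parts identity for $\hat f(k)$. Using Property \eqref{lem:hermite:deriv} in the form $H_k=\sqrt{k!/(k+\beta)!}\,\frac{d^\beta}{dx^\beta}H_{k+\beta}$, one first writes $\hat f(k)=\sqrt{k!/(k+\beta)!}\int_\R f(x)\,\big(\partial_x^\beta H_{k+\beta}\big)(x)\,\varphi(x)\,dx$. I would then integrate by parts $\beta$ times, transferring one derivative at a time from $H_{k+\beta}$ to $f$: since $\frac{d}{dx}(h\varphi)=(\cD_x h)\varphi$ (equivalently, $-\cD_x$ is the formal adjoint of $\partial_x$ with respect to $\langle\cdot,\cdot\rangle_{L^2(\R,\varphi)}$, see the computation following \eqref{eq:diffop}), each step replaces a factor $\partial_x$ by $-\cD_x$ acting on $f$ and produces a boundary term of the form $\big[(\partial_x^{\beta-1-i}H_{k+\beta})\,(\cD_x^{\,i}f)\,\varphi\big]_{-\infty}^{\infty}$ with $0\leq i\leq\beta-1$. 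All such terms vanish: $\partial_x^{\beta-1-i}H_{k+\beta}$ is a polynomial, $\cD_x^{\,i}f(x)=O(e^{x^2/(2c)})$ for $0\leq i\leq\beta-1$ by hypothesis~\eqref{infbehavior}, and $\varphi(x)=(2\pi)^{-1/2}e^{-x^2/2}$ with $c>1$, so each of these products tends to $0$ as $|x|\to\infty$. To make this rigorous I would carry out the integrations by parts over $[-R,R]$ and let $R\to\infty$, using hypothesis (i) together with Cramer's bound for the absolute convergence of the surviving integral and the estimate $f\varphi=O(e^{-\delta x^2})$, $\delta:=\tfrac12-\tfrac1{2c}>0$, for the convergence of $\int_{-R}^{R}f\,(\partial_x^\beta H_{k+\beta})\,\varphi$. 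This yields
\begin{align*}
\hat f(k)=(-1)^\beta\sqrt{\frac{k!}{(k+\beta)!}}\int_\R H_{k+\beta}(x)\,\big(\cD_x^\beta f\big)(x)\,\varphi(x)\,dx .
\end{align*}

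From this the coefficient estimate is immediate. Cramer's bound \eqref{lem:hermite:cramer} gives $|H_{k+\beta}(x)\varphi(x)^{1/2}|\leq1$, so with $M:=\int_\R|\cD_x^\beta f(x)|\,\varphi(x)^{1/2}\,dx<\infty$ (finite by hypothesis (i) for $j=\beta$) one gets
\begin{align*}
|\hat f(k)|\leq M\sqrt{\frac{k!}{(k+\beta)!}}=\frac{M}{\sqrt{(k+1)(k+2)\cdots(k+\beta)}}\leq M\,k^{-\beta/2}\qquad(k\geq1).
\end{align*}
Consequently
\begin{align*}
\|f\|_{p_{\alpha,\gamma}}^2=\hat f(0)^2+\gamma^{-1}\sum_{k=1}^{\infty}k^{\alpha}\hat f(k)^2\leq\hat f(0)^2+\gamma^{-1}M^2\sum_{k=1}^{\infty}k^{\alpha-\beta}=\hat f(0)^2+\gamma^{-1}M^2\zeta(\beta-\alpha),
\end{align*}
which is finite since $\beta-\alpha>1$ and $\hat f(0)=\int_\R f\varphi$ is finite ($f\varphi\in L^1$). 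Finally, to verify $f\in\cL^2(\R,\varphi)$ — the remaining requirement in the definition of $\cH_{p_{\alpha,\gamma}}$ — observe that $\beta>2$ forces $\sum_k|\hat f(k)|<\infty$; hence, arguing as in the proof of Proposition \ref{prop:hermiteexpansion} (Cramer's bound plus Weierstrass' theorem), $\bar f:=\sum_k\hat f(k)H_k$ is a continuous function in $L^2(\R,\varphi)$ with the same Hermite coefficients as $f$, and $f=\bar f$ because the continuous function $(f-\bar f)\varphi$ decays like a Gaussian (by \eqref{infbehavior} for $f$ and Cramer's bound for $\bar f$) and has all moments zero, hence vanishes identically. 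Thus $f\in\cH_{p_{\alpha,\gamma}}$.

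The step I expect to require the most care is the $\beta$-fold integration by parts together with its limiting justification: this is where the precise quantitative content of the hypotheses is used — the exponent $c>1$ (not merely $c>0$) in \eqref{infbehavior}, and the integrability of $\cD_x^\beta f\,\varphi^{1/2}$ (rather than of $f$ itself) in hypothesis (i). The remaining ingredients — Cramer's bound, the elementary factorial estimate, and summing the zeta series — are routine.
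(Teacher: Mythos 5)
Your proof is correct and takes essentially the same route as the paper's: a $\beta$-fold integration by parts transferring derivatives onto $f$ via the adjoint relation for $\cD_x$ (the paper iterates the one-step identity $\hat f(k)=-\tfrac{1}{\sqrt{k+1}}\widehat{\cD_xf}(k+1)$, while you do it in one pass starting from $H_k=\sqrt{k!/(k+\beta)!}\,\tfrac{d^\beta}{dx^\beta}H_{k+\beta}$), with the boundary terms killed by hypothesis (ii), then Cramer's bound, the estimate $k!/(k+\beta)!\le k^{-\beta}$, and the zeta series using $\beta-\alpha>1$. Your final verification that $f\in\cL^2(\R,\varphi)$ and coincides with its Hermite expansion (summable coefficients plus the vanishing-moments argument for $(f-\bar f)\varphi$) is a sound extra step addressing a point the paper's proof leaves implicit.
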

\begin{proof}
Using integration by parts we obtain
\begin{align*}
\hat{f}(k)&=\int_{\R}f(x)H_k(x)\varphi(x)dx\\
&=\lim_{a\to\infty}f(x)\varphi(x)\frac{H_{k+1}(x)}{\sqrt{k+1}}\Big\vert_{x=-a}^{a}-\int_{\R}\frac{d}{dx}\left(f(x)\varphi(x)\right)\frac{H_{k+1}(x)}{\sqrt{k+1}}dx.
\end{align*}
Because of assumption \eqref{infbehavior} there is a constant $\hat{c}>0$ such that
\begin{align*}
|f(a)\varphi(a)H_{k+1}(a)|&\leq \hat{c}\,e^{(\frac{1}{c}-1)\frac{a^2}{2}}\left|H_{k+1}(a)\right|
\end{align*}
and, since $\frac{1}{c}-1<0$ and $H_{k+1}(a)$ is a polynomial, we get that
\begin{align*}
\lim_{|a|\to\infty}f(a)\varphi(a)H_{k+1}(a)=0.
\end{align*}
Thus,
\begin{align*}
\hat{f}(k)&=-\frac{1}{\sqrt{k+1}}\int_{\R}\left(\frac{d}{dx}f(x)-xf(x)\right)H_{k+1}(x)\varphi(x)dx\\
&=-\frac{1}{\sqrt{k+1}}\int_{\R}\cD_xf(x)H_{k+1}(x)\varphi(x)dx\\
&=-\frac{1}{\sqrt{k+1}}\,\widehat{\cD_xf}(k+1).
\end{align*}
Now we can proceed in the same way, but for $\cD^j_xf$, $j=1,\ldots,\beta-1$, instead of $f$, such that after $\beta-1$ times we end up with
\begin{align*}
\hat{f}(k)=(-1)^\beta \bigg(\frac{k!}{(k+\beta)!}\bigg)^{\frac{1}{2}}\,\widehat{\cD_x^{\beta}f}(k+\beta).
\end{align*}
So the $k$-th Hermite coefficient of $f$ can be estimated by
\begin{align*}
\vert\hat{f}(k)\vert&\leq \left(\frac{k!}{(k+\beta)!}\right)^{\frac{1}{2}} \int_{\R}\big|\cD^{\beta}f(x)\big|\left|H_{k+\beta}(x)\right|\varphi(x)dx\\
&\leq \left(\frac{k!}{(k+\beta)!}\right)^{\frac{1}{2}} \int_{\R}\big|\cD^{\beta}f(x)\varphi(x)^{\frac{1}{2}}\big|dx\\
&= C\,\left(\frac{k!}{(k+\beta)!}\right)^{\frac{1}{2}}
\end{align*}
where $C=\int_{\R}|\cD_x^\beta f(x)\varphi(x)^{\frac{1}{2}}|dx<\infty$ and  Cramer's bound is applied for obtaining the second estimate. Then the norm of $f$ can be bounded,    
\begin{align*}
\|f\|_{p_{\alpha,\gamma}}^2&=\sum_{k\in\N_0}p_{\alpha,\gamma}(k)|\hat{f}(k)|^2\\
&\leq |\hat{f}(0)|^2+\gamma^{-1}C^2\sum_{k=1}^{\infty}k^\alpha \frac{k!}{(k+\beta)!}\\
&\leq |\hat{f}(0)|^2+\gamma^{-1}C^2\sum_{k=1}^{\infty}k^{-\beta+\alpha}\\
&= |\hat{f}(0)|^2+\gamma^{-1}C^2\,\zeta(\beta-\alpha)<\infty,
\end{align*}
if $\beta-\alpha>1$. Therefore, we have that $f$ is in the Hermite space $\cH_{p_{\alpha,\gamma}}$ for any $\alpha\in(1,\beta-1)$.
\end{proof}

We can also show a reverse statement, i.e., the space $\cH_{p_{\alpha,\gamma}}$ contains functions which are differentiable up to some finite order. 

\begin{theorem}\label{prop:differentiability}
Let $f\in\cH_{p_{\alpha,\gamma}}$. Then $f\in C^{\beta}(\R^d)$ for every $\beta<\alpha-1$.
\end{theorem}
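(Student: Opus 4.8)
\emph{Proof sketch.} Here $\cH_{p_{\alpha,\gamma}}$ consists of functions on $\R$, so I read the statement with $d=1$; fix an integer $\beta$ with $\beta<\alpha-1$. The plan is to differentiate the Hermite expansion of $f$ term by term. By Theorem~\ref{th:hermitespaceexpansion} we have $f(x)=\sum_{k\in\N_0}\hat{f}(k)H_k(x)$ pointwise on $\R$, and the derivative rule \eqref{lem:hermite:deriv} reads $\frac{d^{\,j}}{dx^{j}}H_k(x)=\sqrt{k!/(k-j)!}\;H_{k-j}(x)$ for $k\ge j$ and $0$ otherwise. Hence the natural candidate for $f^{(j)}$ is
\begin{align*}
g_j(x):=\sum_{k\ge j}\hat{f}(k)\sqrt{\tfrac{k!}{(k-j)!}}\;H_{k-j}(x),\qquad j=0,1,\dots,\beta,
\end{align*}
with $g_0=f$. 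I would show that each $g_j$ with $0\le j\le\beta$ converges uniformly on every bounded interval and that $g_j'=g_{j+1}$; iterating then yields $f\in C^{\beta}(\R)$.

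The key estimate is the absolute summability of the coefficients of $g_j$. Using $\frac{k!}{(k-j)!}=k(k-1)\cdots(k-j+1)\le k^{j}$ for $k\ge j$ and the Cauchy--Schwarz inequality,
\begin{align*}
\sum_{k\ge j}\bigl|\hat{f}(k)\bigr|\sqrt{\tfrac{k!}{(k-j)!}}
\;\le\;\sum_{k\ge 1}\bigl|\hat{f}(k)\bigr|\,k^{j/2}
\;\le\;\Bigl(\sum_{k\ge 1}k^{\alpha}\hat{f}(k)^{2}\Bigr)^{\!1/2}\Bigl(\sum_{k\ge1}k^{\,j-\alpha}\Bigr)^{\!1/2}=:M_j .
\end{align*}
The first factor is finite because $\|f\|_{p_{\alpha,\gamma}}^{2}=|\hat{f}(0)|^{2}+\gamma^{-1}\sum_{k\ge1}k^{\alpha}\hat{f}(k)^{2}<\infty$, and the second factor is finite since $j\le\beta<\alpha-1$ forces $j-\alpha<-1$; thus $M_j<\infty$. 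By Cramer's bound \eqref{lem:hermite:cramer}, $|H_{k-j}(x)\varphi(x)^{1/2}|\le1$, hence $|H_{k-j}(x)|\le(2\pi)^{1/4}e^{x^{2}/4}$, which on any interval $[-R,R]$ is bounded by $(2\pi)^{1/4}e^{R^{2}/4}$ uniformly in $k$. The Weierstrass $M$-test then shows that $g_j$ converges uniformly on $[-R,R]$ for every $R>0$, and the uniform limit theorem gives $g_j\in C(\R)$. (For $j=0$ this is of course just Theorem~\ref{th:hermitespaceexpansion}.)

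It remains to verify $g_j'=g_{j+1}$. Differentiating the general term of $g_j$ and applying \eqref{lem:hermite:deriv} once more,
\begin{align*}
\frac{d}{dx}\Bigl(\sqrt{\tfrac{k!}{(k-j)!}}\;H_{k-j}(x)\Bigr)=\sqrt{\tfrac{k!}{(k-j)!}}\,\sqrt{k-j}\;H_{k-j-1}(x)=\sqrt{\tfrac{k!}{(k-j-1)!}}\;H_{k-j-1}(x)
\end{align*}
for $k\ge j+1$, while the $k=j$ term is constant with derivative $0$; so the term-by-term derivative of $g_j$ is precisely $g_{j+1}$. Since $g_{j+1}$ converges uniformly on bounded intervals whenever $j+1\le\beta$, the classical theorem on term-by-term differentiation (e.g.\ Rudin~\cite{rudin76}, Theorem~7.17) gives that $g_j$ is differentiable on $\R$ with $g_j'=g_{j+1}$. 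Induction on $j$ from $g_0=f$ then yields $f^{(j)}=g_j$ for $0\le j\le\beta$, and since $g_\beta$ is continuous we conclude $f\in C^{\beta}(\R)$.

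The main obstacle is securing \emph{uniform} rather than merely pointwise convergence of the differentiated series on compact sets, as that is what legitimizes the term-by-term differentiation; this is exactly where Cramer's bound enters, providing a bound on $|H_{k-j}(x)|$ over compacts that does not deteriorate with $k$. The remaining pieces — the factorial bound $k!/(k-j)!\le k^{j}$, the Cauchy--Schwarz split $|\hat{f}(k)|k^{j/2}=\bigl(|\hat{f}(k)|k^{\alpha/2}\bigr)k^{(j-\alpha)/2}$, and the index-shift identity $\sqrt{k!/(k-j)!}\,\sqrt{k-j}=\sqrt{k!/(k-j-1)!}$ — are routine.
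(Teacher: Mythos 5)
Your proposal is correct and follows essentially the same route as the paper: term-by-term differentiation of the Hermite series, a Cauchy--Schwarz estimate exploiting $k!/(k-\ell)!\le k^{\ell}$ and $\sum_k k^{\ell-\alpha}<\infty$ for $\ell\le\beta<\alpha-1$, Cramer's bound for uniform control on compacts, and Rudin's theorem on uniform convergence and differentiation. The only cosmetic differences are that you establish absolute summability of the differentiated coefficients and invoke the Weierstrass $M$-test (and make the one-derivative-at-a-time induction explicit), whereas the paper bounds the differentiated partial sums directly; both are sound.
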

\begin{proof}
For $n\in\N$ we write $f_n(x)=\sum_{k=0}^{n}\widehat{f}(k)H_k(x)$ and let $\ell\in\N$ with $\ell\leq\beta$. We have that
\begin{align*}
\frac{d^{\ell}}{d x^\ell}f_n(x)=\sum_{k=0}^{n}\widehat{f}(k)\frac{d^{\ell}}{d x^\ell}H_k(x)=\sum_{k=\ell}^{n}\widehat{f}(k)\sqrt{\frac{k!}{(k-\ell)!}}\,H_{k-\ell}(x).
\end{align*}
Then, for any $n\in\N$,
\begin{align*}
\left\vert \frac{d^{\ell}}{d x^\ell}f_n(x)\right\vert&=\left\vert \sum_{k=\ell}^{n}\left(\widehat{f}(k)p_{\alpha,\gamma}(k)^{-\frac{1}{2}}\right)\left(p_{\alpha,\gamma}(k)^{\frac{1}{2}}\,\sqrt{\frac{k!}{(k-\ell)!}}H_{k-\ell}(x)\right)\right\vert\\
&\leq \|f\|_{p_{\alpha,\gamma}}\frac{1}{\varphi(x)}\left(\sum_{k=\ell}^{n}p_{\alpha,\gamma}(k)\frac{k!}{(k-\ell)!}\right)^\frac{1}{2}\\
&\leq \|f\|_{p_{\alpha,\gamma}}\frac{\gamma}{\varphi(x)}\left(\sum_{k=\ell}^{n}k^{-\alpha}\frac{k!}{(k-\ell)!}\right)^\frac{1}{2}\\
\end{align*}
Since $k!/(k-\ell)!\leq k^{\ell}$, we get
\begin{align*}
\left\vert \frac{d^{\ell}}{d x^\ell}f_n(x)\right\vert&\leq\|f\|_{p_{\alpha,\gamma}}\frac{\gamma}{\varphi(x)}\left(\sum_{k=1}^{n}k^{-(\alpha-\ell)}\right)^\frac{1}{2}.
\end{align*}
Moreover, we have that
\begin{align*}
\sum_{k=1}^{\infty}k^{-(\alpha-\ell)}=\zeta(\alpha-\ell)<\infty,
\end{align*}
because $\alpha-\ell\geq\alpha-\beta>\alpha-(\alpha-1)>1$. Thus, we know that the sequence $(\frac{d^{\ell}}{d x^\ell}f_n(x))_{n\in\N}$ converges uniformly on compacts. Furthermore, from Theorem \ref{th:hermitespaceexpansion} it follows that $f_n$ converges pointwise to $f$. Because of a basic theorem about uniform convergence and differentiation, see e.g., Rudin \cite[Theorem 7.17]{rudin76} we get that $\frac{d^\ell}{dx^\ell}f$ exists and it is given by
\begin{align*}
\frac{d^\ell}{dx^\ell}f(x)=\sum_{k=0}^{\infty}\widehat{f}(k)\sqrt{\frac{k!}{(k-\ell)!}}H_{k-\ell}(x).
\end{align*}
The continuity of the derivatives also follows by a basic theorem about uniform convergence, see e.g., Rudin \cite[Theorem 7.12]{rudin76}.
\end{proof}

Now, we suggest to generalize this type of Hermite space to functions on the $\R^d$ in the following manner. Let $\gammab=(\gamma_1,\ldots,\gamma_d)$ with $\gamma_j>0$ be non-increasing weights and $\alphab=(\alpha_1,\ldots,\alpha_d)$ with $\alpha_j>1$ smoothness parameters. Then we define the function $\p_{\alphab,\gammab}:\N_0^d\longrightarrow\R^+$ by
\begin{align}\label{def:poly}
\p_{\alphab,\gammab}(\k):=\prod_{j=1}^{d}p_{\alpha_j,\gamma_j}(k_j),
\end{align}
which is summable, because
\begin{align}\label{eq:polysum}
\sum_{\k\in\N_0^d}\p_{\alphab,\gammab}(\k)=\prod_{j=1}^{d}(1+\gamma_j\zeta(\alpha_j))<\infty.
\end{align}
So we get the Hermite space $\cH_{\p_{\alphab,\gammab}}$ which also can be written as the $d$-fold Hilbert space tensor product of Hermite spaces of functions on the $\R$,
\begin{align*}
\cH_{\p_{\alphab,\gammab}}=\cH_{p_{\alpha_1,\gamma_1}}\otimes\cdots\otimes\cH_{p_{\alpha_d,\gamma_d}}.
\end{align*}
The reproducing kernel $K_{\p_{\alphab,\gammab}}$ is the product of 
the kernels $K_{p_{\alpha_i,\gamma_j}}$,
\begin{align*}
K_{\p_{\alphab,\gammab}}(\x,\y)&=\sum_{\k\in\N_0^d}\p_{\alphab,\gammab}(\k)H_\k(\x)H_\k(\y)\\
&=\prod_{j=1}^{\infty}\left(1+\gamma_j\sum_{k=1}^{\infty}k^{-\alpha_j}H_{k}(x_j)H_{k}(y_j)\right)
\end{align*}
for $\x,\y\in\R^d$. The Hermite coefficients of the functions of $\cH_{\p_{\alphab,\gammab}}$ again have to decay polynomially and with the smoothness parameters $\alphab$ one can control the order of the polynomial decay for each direction separately. The weights $\gammab$ determine the influence of the coordinates. Since we assumed that $\gammab$ is a non-increasing sequence, the weights moderate the influence of the coordinates. That means that each coordinate is less or at most equally important than the previous coordinates. Because of the product-form of the function $\p_{\alphab,\gammab}$ we are dealing with so-called product weights, first introduced in Sloan and Wo\'zniakowski \cite{sw98}. Because of the weights $\gammab$ we also call $\cH_{\p_{\alphab,\gammab}}$ a \emph{weighted Hermite space}. Furthermore, note that Theorem \ref{th:smoothness} can be extended to Hermite spaces $\cH_{\p_{\alphab,\gammab}}$ of functions on the $\R^d$ which means that the smoothness-parameters $\alphab$ correspond to the differentiability of the functions.

\subsubsection{Weighted Hermite spaces with exponentially decaying coefficients}

Let $\gammab=(\gamma_1,\ldots, \gamma_d)$ be a sequence of non-increasing positive weights and $\omegab=(\omega_1,\ldots,\omega_d)\in(0,1)^d$. We define a function $\epsilonb_{\omegab,\gammab}:\N_0^d\longrightarrow\R^+$ as
\begin{align}\label{def:exp}
\epsilonb_{\omegab,\gammab}(\k)=\prod_{j=1}^{d}\epsilon_{\omega_j,\gamma_j}(k_j)
\end{align}
with
\begin{align*}
\epsilon_{\omega_j,\gamma_j}(k_j)=\begin{cases}1 & \textnormal{if } k_j=0,\\ \gamma_j\omega_j^{k_j} & \textnormal{if } k_j\neq 0\end{cases}
\end{align*}
for $j=1,\ldots,d$. Since
\begin{align*}
\sum_{\k\in\N_0^d}\epsilonb_{\omegab,\gammab}(\k)&=\prod_{j=1}^{d}\left(1+\gamma_j\sum_{k=1}^{\infty}\omega_j^k\right)\\
&=\prod_{j=1}^{d}\left(1+\gamma_j\frac{\omega_j}{1-\omega_j}\right)<\infty,
\end{align*}
we have that $\epsilonb_{\omegab,\gammab}$ is summable and we get the Hermite space $\cH_{\epsilonb_{\omegab,\gammab}}$ which has the Hilbert space tensor product form
\begin{align*}
\cH_{\epsilonb_{\omegab,\gammab}}=\cH_{\epsilon_{\omega_1,\gamma_1}}\otimes\cdots\otimes\cH_{\epsilon_{\omega_d,\gamma_d}}.
\end{align*}
The Hermite coefficients of the elements of $\cH_{\epsilonb_{\omegab,\gammab}}$ decay exponentially which is controlled by the parameter $\omegab$. We again have product weights and the influence of the coordinates can be determined by the choice of the weights $(\gamma_1,\ldots,\gamma_d)$. 

The reproducing kernel of $\cH_{\epsilonb_{\omegab,\gammab}}$ is given by
\begin{align*}
K_{\epsilonb_{\omegab,\gammab}}(\x,\y)&=\prod_{j=1}^{d}\left(1+\gamma_j\sum_{k=1}^{\infty}\omega_j^{k}H_k(x_j)H_k(y_j)\right),
\end{align*}
which follows from Theorem \ref{th:RKHS}. From Mehler's formula, see Szeg\H{o} \cite{szego}, we can further derive, for each $j=1,\ldots,d$, the following formula
\begin{align*}
1+\gamma_j\sum_{k=1}^{\infty}\omega_j^{k}H_k(x_j)H_k(y_j)=1-\gamma_j+\gamma_j\frac{1}{\sqrt{1-\omega_j^2}}\exp\left(\frac{\omega_j}{1+\omega_j}x_j y_j-\frac{\omega_j^2}{2(1-\omega_j^2)}(x_j-y_j)^2\right)
\end{align*}
and thus,
\begin{align}\label{eq:kernelexp}
K_{\epsilonb_{\omegab,\gammab}}(\x,\y)&=\prod_{j=1}^{d}\left(1-\gamma_j+\gamma_j\frac{1}{\sqrt{1-\omega_j^2}}\exp\left(\frac{\omega_j}{1+\omega_j}x_j y_j-\frac{\omega_j^2}{2(1-\omega_j^2)}(x_j-y_j)^2\right)\right).
\end{align}
From this representation of the reproducing kernel we see that for $\gamma_j<1$, $j=1,\ldots,d$, the reproducing kernel $K_{\epsilonb_{\omegab,\gammab}}$ is positive.  

We want to know more about the functions that are contained in the Hermite space $\cH_{\epsilon_{\omegab,\gammab}}$. In the previous subsection, Theorem \ref{th:smoothness} gave us sufficient conditions on a function such that the function belongs to the Hermite space of functions with polynomially decaying Hermite coefficients. Unfortunately, we do not have a similar statement concerning the Hermite space of exponential decaying Hermite coefficients. Nevertheless, we know that the Hermite space $\cH_{\epsilon_{\omegab,\gammab}}$ contains all polynomials, because $\lin\{H_\k:\vert\k\vert\leq m\}$ is equal to the set of all polynomials on the $\R^d$ with degree smaller or equal to $m$. Furthermore, the function $f:\R^d\rightarrow \R$, given by $f(\x)=f(x_1,\ldots,x_d)=\exp(\w\tr \x)$ with $\w\in\R^d$, belongs to $\cH_{\epsilon_{\omegab,\gammab}}$ for any weight sequence $\gammab$ and any $\omegab$. This is true, because we have that the $\k$-th Hermite coefficient is given by
\begin{align*}
\widehat{f}(\k)=e^{\w\tr\w}\frac{\w^\k}{\sqrt{\k!}}
\end{align*}
and consequently, the norm of $f$ is finite, 
\begin{align*}
\|f\|_{\epsilon_{\omegab,\gammab}}^2=e^{\w\tr\w}\prod_{j=1}^{d}\left(1+\gamma_j\sum_{k=1}^{\infty}\omega_j^k\frac{w_j^{2k}}{k!}\right)\leq e^{\w\tr\w}\prod_{j=1}^{d}\exp(\omega_j w_j^{2})<\infty.
\end{align*}
In addition, we can show that the functions in $\cH_{\epsilon_{\omegab,\gammab}}$ are analytic, i.e., analyticity is a necessary condition on the functions for being in the Hermite space of exponentially decaying Hermite coefficients. 

\begin{proposition}\label{prop:analytic}
Let $f\in\cH_{\epsilon_{\omegab,\gammab}}$. Then $f$ is analytic.
\end{proposition}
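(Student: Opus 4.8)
The plan is to realize $f$ as the restriction to $\R^d$ of a function that is holomorphic on all of $\C^d$; since the restriction of an entire function to $\R^d$ is real-analytic, this yields the claim. First I would extract from $f\in\cH_{\epsilonb_{\omegab,\gammab}}$ a decay estimate on the Hermite coefficients: by definition of the norm, $|\hat f(\k)|\le\|f\|_{\epsilonb_{\omegab,\gammab}}\,\epsilonb_{\omegab,\gammab}(\k)^{1/2}$, and since $\epsilon_{\omega_j,\gamma_j}(k_j)\le C_j\,\omega_j^{k_j}$ for all $k_j\ge0$ with $C_j:=\max(1,\gamma_j)$, one gets $|\hat f(\k)|\le C\prod_{j=1}^d\omega_j^{k_j/2}$, where $C:=\|f\|_{\epsilonb_{\omegab,\gammab}}\big(\prod_{j=1}^d C_j\big)^{1/2}$.

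The key step is a bound on the Hermite polynomials at complex arguments strong enough to beat this geometric decay. I would use the generating function \eqref{eq:generating}, which extends to complex arguments since the $H_k$ are polynomials, so that $H_k(z)/\sqrt{k!}$ is the $k$-th Taylor coefficient in $t$ of $e^{zt-t^2/2}$ for every $z\in\C$. Cauchy's estimate together with $\Re(zt-t^2/2)\le|z|R+R^2/2$ on $|t|=R$ then gives
\[
|H_k(z)|\le\sqrt{k!}\,R^{-k}\max_{|t|=R}\big|e^{zt-t^2/2}\big|\le\sqrt{k!}\,R^{-k}e^{|z|R+R^2/2}\qquad(R>0).
\]
Taking $R=\sqrt k$ and using Stirling's formula in the form $\sqrt{k!}\,k^{-k/2}e^{k/2}\le c\,k^{1/4}$ for a universal constant $c$, this yields $|H_k(z)|\le c\,k^{1/4}e^{|z|\sqrt k}$ for $k\ge1$. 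Hence, for a complex vector $\boldsymbol z$ with $|z_j|\le M$ for all $j$,
\[
\sum_{\k\in\N_0^d}|\hat f(\k)|\,|H_\k(\boldsymbol z)|\le C\prod_{j=1}^d\Big(1+c\sum_{k=1}^\infty k^{1/4}\,\omega_j^{k/2}\,e^{M\sqrt k}\Big)<\infty,
\]
since $\omega_j^{k/2}$ decays geometrically while $k^{1/4}e^{M\sqrt k}$ grows only subexponentially; moreover this bound is independent of $\boldsymbol z$ in the polydisc $\{|z_j|\le M\}$.

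It follows that the partial sums $\sum_{|\k|\le N}\hat f(\k)H_\k(\boldsymbol z)$ --- polynomials in $\boldsymbol z\in\C^d$, hence holomorphic --- converge absolutely and uniformly on every compact subset of $\C^d$. By the Weierstrass theorem (a locally uniform limit of holomorphic functions is holomorphic), the limit $F$ is entire on $\C^d$, and for $\boldsymbol z=\x\in\R^d$ Theorem \ref{th:hermitespaceexpansion} gives $F(\x)=\sum_{\k\in\N_0^d}\hat f(\k)H_\k(\x)=f(\x)$. Thus $f$ extends to an entire function and is in particular analytic; the same estimate in fact shows that this extension has finite growth order. The only genuine work here is the polynomial bound at complex arguments in the second paragraph; once it is in place, the conclusion follows routinely from absolute and locally uniform convergence together with the pointwise Hermite expansion of Theorem \ref{th:hermitespaceexpansion}.
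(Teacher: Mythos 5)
Your proof is correct, but it follows a genuinely different route from the paper. The paper stays entirely on $\R^d$: it reduces to the case $\gammab=\1$, differentiates the Hermite series term by term (as in the proof of Theorem \ref{prop:differentiability}), uses Cramer's bound to obtain the derivative estimate $\big|\tfrac{\partial^{|\ellb|}}{\partial\x^{\ellb}}f(\x)\big|\le \|f\|\,\varphi_d(\x)^{-1}\sqrt{\ellb!}\,\prod_j\big((\omega^*)^{\ell_j}/(1-\omega^*)^{\ell_j+1}\big)^{1/2}$, and then verifies real-analyticity directly by showing the Taylor series converges and the integral-form remainder $R_n$ tends to $0$ on the polydisc $\|\x-\y\|_\infty^2<(1-\omega^*)/\omega^*$. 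You instead push the coefficient decay $|\hat f(\k)|\le C\prod_j\omega_j^{k_j/2}$ against a bound for Hermite polynomials at \emph{complex} arguments, obtained from the generating function \eqref{eq:generating} via Cauchy's estimate with radius $R=\sqrt k$, giving $|H_k(z)|\le c\,k^{1/4}e^{|z|\sqrt k}$; absolute and locally uniform convergence of the series on $\C^d$ plus the Weierstrass convergence theorem then produce an entire extension agreeing with $f$ on $\R^d$ by Theorem \ref{th:hermitespaceexpansion}. Your conclusion is in fact stronger (entire extension, rather than local real-analyticity with radius governed by $\omega^*$), your treatment of the weights via $C_j=\max(1,\gamma_j)$ sidesteps the paper's reduction to $\gamma_j\le1$, and the convergence argument is cleaner, at the price of invoking complex function theory (Cauchy estimates and holomorphy of locally uniform limits in several variables, plus analytic continuation to justify the generating function identity for complex $z$ — all standard). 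What the paper's more elementary approach buys is explicit quantitative control of all real derivatives of $f$ in terms of $\varphi_d(\x)^{-1}$ and $\omega^*$, which is of independent use; what yours buys is a sharper structural statement about $\cH_{\epsilonb_{\omegab,\gammab}}$ with less bookkeeping. All steps you sketch (the coefficient bound, the Stirling estimate $\sqrt{k!}\,k^{-k/2}e^{k/2}\le c\,k^{1/4}$, the factorization of the sum over $\N_0^d$ into one-dimensional sums) check out.
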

\begin{proof}
Since $\gamma_j\leq 1$ for all $j=1,\ldots,d$,  we have $\cH_{\epsilon_{\omegab,\gammab}}\subseteq\cH_{\epsilon_{\omegab,\1}}$ with $\1=(1,\ldots,1)$. Therefore, it is sufficient to show analyticity for functions $f$ which belong to $\cH_{\epsilon_{\omegab,\1}}$. Let $\ellb\in\N_0^d$ and $f\in\cH_{\epsilon_{\omegab,\1}}$. Analogue to the proof of Proposition \ref{prop:differentiability} it can be shown that the derivative $\frac{\partial^{|\ellb|}}{\partial \x^\ellb}f$ exists and that we can interchange summation and differentiation, i.e., we obtain
\begin{align*}
\frac{\partial^{|\ellb|}}{\partial \x^\ellb}f(\x)=\sum_{\k\in\N_0^d}\widehat{f}(\k)\frac{\partial^{|\ellb|}}{\partial \x^\ellb}H_\k(\x)=\sum_{\k\geq\ellb}\widehat{f}(\k)\sqrt{\frac{\k!}{(\k-\ellb)!}}H_{\k-\ellb}(\x).
\end{align*}
Then,
\begin{align*}
\left\vert \frac{\partial^{|\ellb|}}{\partial \x^\ellb}f(\x)\right\vert&=\left\vert \sum_{\k\geq\ellb}\left(\widehat{f}(\k)\left(\epsilon_{\omegab,\1}(\k)\right)^{-\frac{1}{2}}\right)\left(\left(\epsilon_{\omegab,\1}(\k)\right)^{\frac{1}{2}}\sqrt{\frac{\k!}{(\k-\ellb)!}}H_{\k-\ellb}(\x)\right)\right\vert\\
&\leq \|f\|_{\epsilon_{\omegab,\1}}\frac{1}{\varphi_d(\x)}\left(\sum_{\k\geq\ellb}\epsilon_{\omegab,\1}(\k)\frac{\k!}{(\k-\ellb)!}\right)^\frac{1}{2}\\
&=\|f\|_{\epsilon_{\omegab,\1}}\frac{1}{\varphi_d(\x)}\left(\sum_{\k\geq\ellb}\prod_{j=1}^{d}(\ell_j!)\omega_j^{k_j}\frac{k_j!}{(k_j-\ell_j)!(\ell_j!)}\right)^\frac{1}{2}\\
&\leq\|f\|_{\epsilon_{\omegab,\1}}\frac{1}{\varphi_d(\x)}\prod_{j=1}^{d}\sqrt{\ell_j!}\left(\sum_{k=\ell_j}^{\infty}\binom{k}{\ell_j}\omega_j^k\right)^\frac{1}{2}\\
&\leq\|f\|_{\epsilon_{\omegab,\1}}\frac{1}{\varphi_d(\x)}\sqrt{\ellb!}\prod_{j=1}^{d}\left(\frac{\omega_j^{\ell_j}}{(1-\omega_j)^{\ell_j+1}}\right)^\frac{1}{2}\\
&\leq\|f\|_{\epsilon_{\omegab,\1}}\frac{1}{\varphi_d(\x)}\sqrt{\ellb!}\prod_{j=1}^{d}\left(\frac{(\omega^*)^{\ell_j}}{(1-\omega^*)^{\ell_j+1}}\right)^\frac{1}{2}
\end{align*}
where $\omega^*=\max_j\omega_j$. Now we show that $f$ can locally be represented by its Taylor expansion. For any $\y\in\R^d$ and any $\x\in\R^d$ with $\|\x-\y\|_\infty^2<\frac{1-\omega^*}{\omega^*}$, 
\begin{align*}
&\left\vert\sum_{\ellb\in\N_0^d}\frac{1}{\ellb!}\frac{\partial^{|\ellb|}}{\partial\x^\ellb}f(\y)\prod_{j=1}^{d}(x_j-y_j)^{\ell_j}\right\vert\leq\\
&\qquad\qquad\leq \|f\|_{\epsilon_{\omegab,\1}}\frac{1}{\varphi_d(\x)}\sum_{\ellb\in\N_0^d}\prod_{j=1}^{d}\frac{1}{\sqrt{\ell_j!}}\left(\frac{(\omega^*)^{\ell_j}(x_j-y_j)^{2\ell_j}}{(1-\omega^*)^{\ell_j+1}}\right)^{\frac{1}{2}}\\
&\qquad\qquad\leq \|f\|_{\epsilon_{\omegab,\1}}\frac{1}{\varphi_d(\x)}\prod_{j=1}^{d}\sum_{\ell=0}^{\infty}\frac{1}{\sqrt{\ell!}}\left(\frac{(\omega^*)^{\ell}(x_j-y_j)^{2\ell}}{(1-\omega^*)^{\ell+1}}\right)^{\frac{1}{2}}\\
&\qquad\qquad\leq \|f\|_{\epsilon_{\omegab,\1}}\frac{1}{\varphi_d(\x)}\prod_{j=1}^{d}\left(\sum_{\ell=0}^{\infty}\frac{1}{\ell!}\right)^{\frac{1}{2}}\left(\sum_{\ell=0}^{\infty}\frac{(\omega^*)^{\ell}(x_j-y_j)^{2\ell}}{(1-\omega^*)^{\ell+1}}\right)^{\frac{1}{2}}\\
&\qquad\qquad\leq \|f\|_{\epsilon_{\omegab,\1}}\frac{1}{\varphi_d(\x)}\left(\frac{e}{1-\omega^*}\sum_{\ell=0}^{\infty}\left(\frac{\omega^*\|\x-\y\|_\infty^2}{1-\omega^*}\right)^\ell\right)^{d/2}\\
&\qquad\qquad\leq \|f\|_{\epsilon_{\omegab,\1}}\frac{1}{\varphi_d(\x)}\left(\frac{e}{1-\omega^*-\omega^*\|\x-\y\|_\infty^2}\right)^{d}<\infty.
\end{align*}
Thus, we have that the Taylor expansion converges locally. It remains to show that the remainder $R_n$ of the Taylor polynomial, given by
\begin{align*}
R_n:=\sum_{|\k|=n+1}\frac{n+1}{\k!}(\x-\y)^{\k}\int_{0}^{1}(1-t)^n \frac{\partial^{|\k|}}{\partial\x^{\k}}f(\y+t(\x-\y)) dt,
\end{align*}
vanishes if $n$ goes to infinity. We have
\begin{align*}
\vert R_n\vert&\leq\sum_{|\k|=n+1}\frac{n+1}{\k!}\vert\x-\y\vert^{\k}\int_{0}^{1}|1-t|^n \left|\frac{\partial^{|\k|}}{\partial\x^{\k}}f(\y+t(\x-\y))\right|dt\\
&\leq \sum_{|\k|=n+1}(n+1)\vert\x-\y\vert^{\k}\int_{0}^{1} \frac{\|f\|_{\epsilon_{\omegab,\1}}|1-t|^n}{\varphi_d(\y+t(\x-\y))}\prod_{j=1}^{d}\left(\frac{(\omega^{*})^{k_j}}{k_j!(1-\omega^*)^{k_j+1}}\right)^{\frac{1}{2}} dt\\
&\leq (n+1)\left[\int_{0}^{1} \frac{\|f\|_{\epsilon_{\omegab,\1}}|1-t|^n}{\varphi_d(\y+t(\x-\y))}dt \right]\sum_{|\k|=n+1}\prod_{j=1}^{d}\left(\frac{(\omega^{*})^{k_j}|x_j-y_j|^{2k_j}}{(1-\omega^*)^{k_j+1}}\right)^{\frac{1}{2}}.
\end{align*}
Since $\|\x-\y\|_\infty<\sqrt{\frac{1-\omega^*}{\omega^*}}$, we have for any $j=1,\ldots,d$, 
\begin{align*}
\frac{1}{\varphi(y_j+t(x_j-y_j))}\leq \begin{cases}1/\varphi\big(y_j+\sqrt{(1-\omega^*)/\omega^*}\,\big) & \textnormal{if } y_j\geq 0;\\1/\varphi\big(y_j-\sqrt{(1-\omega^*)/\omega^*}\,\big) & \textnormal{if } y_j< 0, \end{cases}
\end{align*}
such that we we can bound $1/\varphi_d(\y+t(\x-\y))$ by some constant $C_1$ independent of $n$ and $t$. Hence,
\begin{align*}
\vert R_n\vert&\leq C_1\|f\|_{\epsilon_{\omegab,\1}}(n+1)\left[\int_{0}^{1}|1-t|^n dt\right]\sum_{|\k|=n+1}\prod_{j=1}^{d}\left(\frac{(\omega^{*})^{k_j}|x_j-y_j|^{2k_j}}{(1-\omega^*)^{k_j+1}}\right)^{\frac{1}{2}}\\
&\leq C_1\|f\|_{\epsilon_{\omegab,\1}}\frac{1}{(1-\omega^*)^{d/2}}\sum_{|\k|=n+1}\prod_{j=1}^{d}\left(\frac{\omega^*\|\x-\y\|_\infty^2}{1-\omega^*}\right)^{\frac{k_j}{2}}\\
&\leq C_1\|f\|_{\epsilon_{\omegab,\1}}\frac{1}{(1-\omega^*)^{d/2}}\sum_{|\k|=n+1}\left(\frac{\omega^*\|\x-\y\|_\infty^2}{1-\omega^*}\right)^{\frac{\vert\k\vert}{2}}\\
&\leq \frac{C_1\|f\|_{\epsilon_{\omegab,\1}}}{(1-\omega^*)^{d/2}}\left(\frac{\omega^*\|\x-\y\|_\infty^2}{1-\omega^*}\right)^\frac{n+1}{2}\frac{(d+n)!}{(d-1)!(n+1)!}.
\end{align*}
Since $\omega^*\|\x-\y\|_\infty^2/(1-\omega^*)<1$ and $(d+n)!/((d-1)!(n+1)!)=\cO(n^{d-1})$, we get that $R_n\rightarrow 0$ as $n$ goes to $\infty$. Thus, $f$ is indeed analytic.
\end{proof}

\subsection{Multivariate integration and tractability in weighted Hermite spaces}\label{ssec:integration}

A survey of the analysis of multivariate integration in reproducing kernel Hilbert spaces is given, e.g., in Dick and Pillichshammer \cite{dick}, where they study the integration of functions over the unit cube $[0,1)^d$. In the following, we intend to study multivariate integration of functions which belong to a Hermite space $\cH_\r$. For that, we consider the integral of functions $f\in\cH_\r$ over the $\R^d$ with respect to the Gaussian measure, i.e.,
\begin{align}\label{eq:integral}
I(f)=\int_{\R^d}f(\x)\varphi_d(\x)d\x.
\end{align}
The linear functional $I$ is well-defined, because a Gaussian square-integrable function $f$ also is integrable with respect to the Gaussian measure. Now we intend to approximate the $d$-dimensional integral \eqref{eq:integral} by a quasi-Monte Carlo rule, i.e., an equally weighted quadrature rule
\begin{align*}
Q_{n,d}(f,\cP)=\frac{1}{n}\sum_{i=1}^{n}f(\x_i)
\end{align*}
with deterministic point set $\cP=\{\x_1,\ldots,\x_n\}\subset\R^d$. For that, it is interesting to analyze the integration error of a QMC rule which depends on both the function $f$ and the point set $\cP$.

\begin{definition}
Let $\cH_\r$ be a Hermite space and let $\cP$ be the quadrature points used in the QMC rule $Q_{n,d}$.
\begin{enumerate}[(i)]
\item For $f\in\cH_\r$ the {\em QMC integration error} of $f$ is given by
\begin{align*}
e(f,\cP):=\vert I(f)-Q_{n,d}(f,\cP)\vert.
\end{align*}
\item The {\em worst case error} for QMC integration in $\cH_\r$ is defined as
\begin{align*}
e_{n,d}(\cP,\cH_\r):=\sup_{f\in\cH_\r,\|f\|_\r\leq1}e(f,\cP)=\sup_{f\in\cH_\r,\|f\|_\r\leq1}\vert I(f)-Q_{n,d}(f,\cP)\vert.
\end{align*}
\end{enumerate}
\end{definition}

Since we know that a Hermite space has a reproducing kernel function $K_\r$, we can use the reproducing property of $K_\r$ to get 
\begin{align*}
\int_{\R^d}f(\x)\varphi_d(\x)d\x=\int_{\R^d}\langle f,K_\r(\cdot,\x)\rangle_\r \varphi_d(\x)d\x=\left\langle f,\int_{\R^d}K_\r(\cdot,\x)\varphi_d(\x)d\x\right\rangle_{\!\r}\,,
\end{align*}
where we have used continuity of integration on the space $\cH_\r$, which
in turn follows from 
\begin{align*}
\left|\int_{\R^d} f(\x) \varphi_d(\x)d\x\right|
=|\hat f(0)|\le \|f\|_\r\,.
\end{align*}
Therefore the {\em representer of integration} is the function $\x\mapsto\int_{\R^d}K_\r(\x,\y)\varphi_d(\y)d\y$. In the same manner we obtain
\begin{align*}
\frac{1}{n}\sum_{i=1}^{n}f(\x_i)=\frac{1}{n}\sum_{i=1}^{n}\langle f, K_\r(\cdot,\x_i)\rangle_{\r}=\left\langle f, \frac{1}{n}\sum_{i=1}^{n}K_\r(\cdot,\x_i)\right\rangle_{\!\r},
\end{align*}
that means, $\x\mapsto\frac{1}{n}\sum_{i=1}^{n}K_\r(\x,\x_i)$ is the representer of the QMC rule. Now, the integration error $e(f,\cP)$ can be estimated by
\begin{align*}
\big|I(f)-Q_{n,d}(f,\cP)\big|&=\left\vert\left\langle f,\int_{\R^d}K_\r(\cdot,\x)\varphi_d(\x)d\x-\frac{1}{n}\sum_{i=1}^{n}K_\r(\cdot,\x_i)\right\rangle\right\vert\\
&\leq \left\|f\right\|_\r\bigg\|\int_{\R^d}K_\r(\cdot,\x)\varphi_d(\x)d\x-\frac{1}{n}\sum_{i=1}^{n}K_\r(\cdot,\x_i)\bigg\|_\r.
\end{align*}
So for any point set $\cP=\{\x_1,\ldots,\x_n\}$ the worst case error $e_{n,d}(\cP,\cH_\r)$ for QMC integration in the Hermite space $\cH_\r$ is given by
\begin{align*}
e_{n,d}(\cP,\cH_\r)=\bigg\|\int_{\R^d}K_\r(\cdot,\x)\varphi_d(\x)d\x-\frac{1}{n}\sum_{i=1}^{n}K_\r(\cdot,\x_i)\bigg\|_\r
\end{align*}
Because of the linearity of the inner product we obtain
\begin{align*}
e^2_{n,d}(\cP,\cH_\r)&=\Big\langle \int_{\R^d}K_\r(\cdot,\x)\varphi_d(\x)d\x-\frac{1}{n}\sum_{i=1}^{n}K_\r(\cdot,\x_i),\int_{\R^d}K_\r(\cdot,\x)\varphi_d(\x)d\x-\frac{1}{n}\sum_{i=1}^{n}K_\r(\cdot,\x_i) \Big\rangle_{\!\r}\\
&=\Big\langle \int_{\R^d}K_\r(\cdot,\x)\varphi_d(\x)d\x,\int_{\R^d}K_\r(\cdot,\x)\varphi_d(\x)d\x\Big\rangle_{\!\r}-\frac{2}{n}\Big\langle \int_{\R^d}K_\r(\cdot,\x)\varphi_d(\x)d\x,\sum_{i=1}^{n}K_\r(\cdot,\x_i) \Big\rangle_{\!\r}\\
&\qquad+\frac{1}{n^2}\Big\langle \sum_{i=1}^{n}K_\r(\cdot,\x_i),\sum_{i=1}^{n}K_\r(\cdot,\x_i)\Big\rangle_{\!\r}
\end{align*}
and further
\begin{align*}
e^2_{n,d}(\cP,\cH_\r)&=\int_{\R^d}\int_{\R^d}\Big\langle K_\r(\cdot,\x),K_\r(\cdot,\y)\Big\rangle_{\!\r}\varphi_d(\x)\varphi_d(\y)d\x d\y-\frac{2}{n}\sum_{i=1}^{n}\int_{\R^d}\Big\langle K_\r(\cdot,\x),K_\r(\cdot,\x_i) \Big\rangle_{\!\r}\varphi_d(\x)d\x\\
&\qquad+\frac{1}{n^2}\sum_{i=1}^{n}\sum_{j=1}^{n}\Big\langle K_\r(\cdot,\x_i),K_\r(\cdot,\x_j)\Big\rangle_{\!\r}.\\
\end{align*}

Using the reproducing property of $K_\r$ we end up with
\begin{align*}
e^2_{n,d}(\cP,\cH_\r)&=\int_{\R^d}\int_{\R^d} K_\r(\x,\y)\varphi_d(\x)\varphi_d(\y)d\x d\y-\frac{2}{n}\sum_{i=1}^{n}\int_{\R^d} K_\r(\x,\x_i)\varphi_d(\x)d\x\\
&\qquad+\frac{1}{n^2}\sum_{i=1}^{n}\sum_{j=1}^{n} K_\r(\x_i,\x_j).
\end{align*}

If we now use that $K_\r(\x,\y)=\sum_{\k\in\N_0^d}\r(\k)H_\k(\x)H_\k(\y)$, we get for the first term
\begin{align*}
\int_{\R^d}\int_{\R^d} K_\r(\x,\y)\varphi_d(\x)\varphi_d(\y)d\x d\y&=\int_{\R^d}\int_{\R^d} \sum_{\k\in\N_0^d}\r(\k)H_\k(\x)H_\k(\y)\varphi_d(\x)\varphi_d(\y)d\x d\y\\
&= \sum_{\k\in\N_0^d}\r(\k)\int_{\R^d}H_\k(\x)\varphi_d(\x)d\x\int_{\R^d}H_\k(\y)\varphi_d(\y)d\y\\
&=\r(\0)
\end{align*}
and for the second term
\begin{align*}
\frac{2}{n}\sum_{i=1}^{n}\int_{\R^d} K_\r(\x,\x_i)\varphi_d(\x)d\x&=\frac{2}{n}\sum_{i=1}^{n}\int_{\R^d} \sum_{\k\in\N_0^d}\r(\k)H_\k(\x)H_\k(\x_i)\varphi_d(\x)d\x\\
&=\frac{2}{n}\sum_{\k\in\N_0^d}\r(\k)\sum_{i=1}^{n}H_\k(\x_i)\int_{\R^d} H_\k(\x)\varphi_d(\x)d\x\\
&=\frac{2}{n}\r(\0)\sum_{i=1}^{n}H_\0(\x_i)=2\r(\0).
\end{align*}
Therefore, we obtain
\begin{align}\label{eq:worstcaseerror}
e_{n,d}(\cP,\cH_\r)&=\left(-\r(\0)+\frac{1}{n^2}\sum_{i=1}^{n}\sum_{j=1}^{n} K_\r(\x_i,\x_j)\right)^{\frac{1}{2}}.
\end{align}

To get an estimate for the integration error, we would like to have an upper bound for the worst case error $e_{n,d}(\cP,\cH_\r)$. Since we have not chosen a specific point set $\cP$, we derive an upper bound through an averaging argument.

\begin{definition}\label{def:gaussianerror}
The Gaussian weighted root-mean-square error for QMC integration is
\begin{align*}
\bar{e}_{n,d}(\cH_\r):=\left(\int_{\R^{dn}}e_{n,d}^{2}(\x_1,\ldots,\x_n;\cH_\r)\varphi_d(\x_1)\ldots\varphi_d(\x_n)d(\x_1,\ldots ,\x_{n})\right)^{\frac{1}{2}}
\end{align*}
where $e_{n,d}(\x_1,\ldots,\x_n;\cH_\r)$ is the worst case error of the point set $\cP=(\x_1,\ldots,\x_n)$. 
\end{definition}

\begin{proposition}\label{prop:gausserror}
Let $\cH_\r$ be a Hermite space. Then the Gaussian weighted root-mean-square error $\bar{e}_{n,d}(\cH_\r)$ for QMC integration is given by
\begin{align*}
\bar{e}_{n,d}(\cH_\r)=\frac{1}{\sqrt{n}}\left(\sum_{\k\in\N_0^d}\r(\k)-\r(\0)\right)^{\frac{1}{2}}.
\end{align*}
\end{proposition}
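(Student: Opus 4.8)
The plan is to start from the closed-form expression \eqref{eq:worstcaseerror} for the squared worst case error of a fixed point set and integrate it against the $n$-fold product of the standard Gaussian measure. Since $e_{n,d}^2(\x_1,\ldots,\x_n;\cH_\r)=-\r(\0)+\frac{1}{n^2}\sum_{i,j=1}^{n}K_\r(\x_i,\x_j)$, linearity of the integral gives $\bar e_{n,d}^2(\cH_\r)=-\r(\0)+\frac{1}{n^2}\sum_{i,j=1}^{n}\int_{\R^{dn}}K_\r(\x_i,\x_j)\,\varphi_d(\x_1)\cdots\varphi_d(\x_n)\,d(\x_1,\ldots,\x_n)$. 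Each summand depends on at most two of the variables, so by Fubini the integrations over the unused variables contribute only factors $\int_{\R^d}\varphi_d=1$, and the computation reduces to the two integrals $\int_{\R^d}K_\r(\x,\x)\varphi_d(\x)\,d\x$ (the diagonal pairs $i=j$) and $\int_{\R^d}\int_{\R^d}K_\r(\x,\y)\varphi_d(\x)\varphi_d(\y)\,d\x\,d\y$ (the off-diagonal pairs $i\neq j$).

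For these I would insert the Hermite expansion $K_\r(\x,\y)=\sum_{\k\in\N_0^d}\r(\k)H_\k(\x)H_\k(\y)$ from Theorem~\ref{th:RKHS} and interchange summation and integration. On the diagonal all terms $\r(\k)H_\k(\x)^2\varphi_d(\x)$ are nonnegative, so Tonelli applies and, by orthonormality of the Hermite polynomials (Proposition~\ref{prop:ONB}), $\int_{\R^d}K_\r(\x,\x)\varphi_d(\x)\,d\x=\sum_{\k\in\N_0^d}\r(\k)\int_{\R^d}H_\k(\x)^2\varphi_d(\x)\,d\x=\sum_{\k\in\N_0^d}\r(\k)$. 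Off the diagonal the terms may change sign, so I would justify the interchange by absolute convergence: Cramer's bound (Property~\eqref{lem:hermite:cramer}) gives $|H_\k(\x)|\le\varphi_d(\x)^{-1/2}$, whence $\sum_{\k}\r(\k)|H_\k(\x)H_\k(\y)|\varphi_d(\x)\varphi_d(\y)\le\big(\sum_{\k}\r(\k)\big)\varphi_d(\x)^{1/2}\varphi_d(\y)^{1/2}$, which is integrable over $\R^d\times\R^d$ since $\r$ is summable and $\int_{\R^d}\varphi_d(\x)^{1/2}d\x<\infty$. After the interchange, $\int_{\R^d}\int_{\R^d}K_\r(\x,\y)\varphi_d(\x)\varphi_d(\y)\,d\x\,d\y=\sum_{\k}\r(\k)\big(\int_{\R^d}H_\k(\x)\varphi_d(\x)\,d\x\big)^2=\r(\0)$, using $\int_{\R^d}H_\k\varphi_d\,d\x=\langle H_\k,H_\0\rangle_{L^2(\R^d,\varphi_d)}=\delta_{\k,\0}$ because $H_\0\equiv 1$.

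Finally I would count: there are $n$ diagonal pairs, each contributing $\sum_{\k}\r(\k)$, and $n^2-n$ off-diagonal pairs, each contributing $\r(\0)$, so $\bar e_{n,d}^2(\cH_\r)=-\r(\0)+\frac{1}{n^2}\big(n\sum_{\k}\r(\k)+n(n-1)\r(\0)\big)=\frac{1}{n}\sum_{\k\in\N_0^d}\r(\k)-\frac{1}{n}\r(\0)$, and taking square roots yields the claimed identity. No step is genuinely hard; the only point deserving care is the term-by-term integration of the kernel series, which is handled by Tonelli on the diagonal and by the Cramer-bound dominated-integrability argument off the diagonal.
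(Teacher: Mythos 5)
Your proposal is correct and follows essentially the same route as the paper's proof: integrate the closed-form worst case error \eqref{eq:worstcaseerror} against the product Gaussian measure, split the double sum into the $n$ diagonal and $n^2-n$ off-diagonal pairs, and evaluate the two kernel integrals via the Hermite expansion and orthonormality, yielding $\sum_{\k}\r(\k)$ and $\r(\0)$ respectively. The only difference is that you explicitly justify the interchange of summation and integration (Tonelli on the diagonal, a Cramer-bound domination off it), which the paper performs without comment.
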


\begin{proof}
With \eqref{eq:worstcaseerror} the Gaussian weighted root-mean-square error can be written as
\begin{align*}
\bar{e}_{n,d}^2(\cH_\r)=-\r(\0)+\frac{1}{n^2}\sum_{i=1}^{n}\sum_{j=1}^{n}\int_{\R^d}\int_{\R^d}K_\r(\x_i,\x_j)\varphi_d(\x_i)\varphi_d(\x_j)d\x_id\x_j
\end{align*}
Now we consider
\begin{align*}
&\sum_{i=1}^{n}\sum_{j=1}^{n}\int_{\R^d}\int_{\R^d}K_\r(\x_i,\x_j)\varphi_d(\x_i)\varphi_d(\x_j)d\x_id\x_j=\\
&\qquad\qquad=\sum_{i=1}^{n}\int_{\R^d}K_\r(\x_i,\x_i)\varphi_d(\x_i)d\x_i+\sum_{i\neq j}\int_{\R^d}\int_{\R^d}K_\r(\x_i,\x_j)\varphi_d(\x_i)\varphi_d(\x_j)d\x_id\x_j
\end{align*}
in more detail. Since
\begin{align*}
\int_{\R^d}K_\r(\x_i,\x_i)\varphi_d(\x_i)d\x_i&=\sum_{\k\in\N_0^d}\r(\k)\int_{\R^d}H_\k(\x)^2\varphi_d(\x)d\x\\
&=\sum_{\k\in\N_0^d}\r(\k)
\end{align*}
and for $i\neq j$
\begin{align*}
\int_{\R^d}\int_{\R^d}K_\r(\x_i,\x_j)\varphi_d(\x_i)\varphi_d(\x_j)d\x_id\x_j&=\sum_{\k\in\N_0^d}\r(\k)\int_{\R^d}H_\k(\x)\varphi_d(\x)d\x\int_{\R^d}H_\k(\y)\varphi_d(\y)d\y\\
&=\r(\0),
\end{align*}
we have that
\begin{align*}
\sum_{i=1}^{n}\sum_{j=1}^{n}\int_{\R^d}\int_{\R^d}K_\r(\x_i,\x_j)\varphi_d(\x_i)\varphi_d(\x_j)d\x_id\x_j=n \sum_{\k\in\N_0^d}\r(\k)+(n^2-n)\r(\0).
\end{align*}
So we end up with
\begin{align*}
\bar{e}_{n,d}^2(\cH_\r)&=-\r(\0)+\frac{1}{n}\sum_{\k\in\N_0^d}\r(\k)+\frac{n^2-n}{n^2}\r(\0)\\
&=\frac{1}{n}\left(\sum_{\k\in\N_0^d}\r(\k)-\r(\0)\right).
\end{align*}
\end{proof}

\begin{corollary}\label{coro:existence}
Let $\cH_\r$ be Hermite space of $d$-dimensional functions. For each $n\in\N$ there exists a point set $\cP=\{\x_1,\ldots,\x_n\}$ such that for the worst case error for QMC integration in $\cH_{\r}$ the upper bound
\begin{align*}
e_{d,n}(\cP,\cH_\r)\leq \frac{c(d)}{\sqrt{n}}
\end{align*}
holds, where $c(d):=\sqrt{\sum_{\k\in\N_0^d\backslash\{\0\}}\r(\k)}$.
\end{corollary}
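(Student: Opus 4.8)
The statement is a textbook averaging (mean-value) argument, so the work has essentially all been done already in Proposition \ref{prop:gausserror}. The plan is to combine that proposition with the elementary observation that a nonnegative integrable function cannot strictly exceed its own average on a set of full measure.

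\emph{Step 1.} Recall from Proposition \ref{prop:gausserror} that the Gaussian weighted root-mean-square error satisfies
\begin{align*}
\bar{e}_{n,d}(\cH_\r)^2=\frac{1}{n}\left(\sum_{\k\in\N_0^d}\r(\k)-\r(\0)\right)=\frac{1}{n}\sum_{\k\in\N_0^d\backslash\{\0\}}\r(\k)=\frac{c(d)^2}{n}\,,
\end{align*}
where finiteness of the right-hand side is guaranteed by the summability of $\r$.

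\emph{Step 2.} By Definition \ref{def:gaussianerror}, the quantity $\bar{e}_{n,d}(\cH_\r)^2$ is precisely the average of the map $(\x_1,\ldots,\x_n)\mapsto e_{n,d}^2(\x_1,\ldots,\x_n;\cH_\r)$ against the product Gaussian density $\varphi_d(\x_1)\cdots\varphi_d(\x_n)$ on $\R^{dn}$. This map is nonnegative, and it is measurable (it is given by the explicit closed-form expression \eqref{eq:worstcaseerror} in terms of $K_\r$, which is continuous). Since its average equals $c(d)^2/n<\infty$, it cannot be the case that $e_{n,d}^2(\x_1,\ldots,\x_n;\cH_\r)>c(d)^2/n$ for almost every $(\x_1,\ldots,\x_n)$: integrating such an inequality would yield $c(d)^2/n>c(d)^2/n$, a contradiction. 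Hence the set of configurations with $e_{n,d}^2(\x_1,\ldots,\x_n;\cH_\r)\le c(d)^2/n$ has positive measure, and in particular is nonempty.

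\emph{Step 3.} Choose any point set $\cP=\{\x_1,\ldots,\x_n\}$ from this set; taking square roots gives $e_{n,d}(\cP,\cH_\r)\le c(d)/\sqrt{n}$, as claimed. The main (and essentially only) point requiring care is the justification in Step 2 that the average being finite forces the existence of a configuration at or below the average — this is immediate once one notes $\bar e_{n,d}(\cH_\r)^2<\infty$, which is where the hypothesis that $\r$ is summable is used. No construction of an explicit good point set is needed.
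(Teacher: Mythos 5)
Your proof is correct and follows essentially the same route as the paper: the paper also bounds the best point set by the Gaussian weighted root-mean-square error from Proposition \ref{prop:gausserror} and concludes via the averaging argument. The only difference is cosmetic — you spell out the measure-theoretic justification that a nonnegative function cannot exceed its average everywhere, whereas the paper simply writes that the minimum over point sets is bounded by $\bar{e}_{n,d}(\cH_\r)$.
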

\begin{proof}
We can estimate the minimum of the worst case error over all possible point sets $\cP$ by the Gaussian weighted root-mean-square error, i.e.,
\begin{align*}
\min_{\cP=\{\x_1,\ldots,\x_n\}\subset\R^d}e_{n,d}(\cP,\cH_\r) \leq \bar{e}_{n,d}(\cH_\r).
\end{align*}
With Proposition \ref{prop:gausserror} we get
\begin{align*}
\min_{\cP=\{\x_1,\ldots,\x_n\}\subset\R^d}e_{n,d}(\cP) &\leq\frac{c(d)}{\sqrt{n}}.
\end{align*}
\end{proof}

Corollary \ref{coro:existence} states that a sequence of point sets $(\cP_n)_{n\in\N}$ can be chosen such that the convergence rate of the corresponding quasi-Monte Carlo method is $\frac{1}{2}$, which is the convergence rate of crude Monte Carlo simulation. However, this upper bound results from an averaging argument and, consequently, we are confident that the upper bound can be improved for specific point sets. Furthermore, $c(d)$ is constant with respect to $n$ but it still can depend on the dimension $d$ and it depends on the function $\r$. To get more information about the behavior of $c(d)$, we need additional information about the function $\r$. For that, we will consider the two examples of Hermite spaces, given in the previous subsection.

To study the dependence on dimension $d$ it is reasonable to consider the minimal number of information operations which are required to reduce the initial error, given by
\begin{align*}
e_{0,d}(\cH_\r)=\sup_{f\in\cH_\r,\|f\|_\r\leq1}\vert I(f)\vert,
\end{align*}
by a factor $\varepsilon\in(0,1)$. Since $I(f)=\hat{f}(\0)=\langle f,1\rangle_\r$, we know that the initial error is $\r(\0)$. Now let
\begin{align*}
n_{\min}(\varepsilon,d;\cH_\r):=\min\{n\,:\,\exists\,\cP \textnormal{ such that } e_{n,d}(\cP,\cH_\r)\leq\varepsilon\r(\0)\},
\end{align*}
which is called the {\em information complexity} of quasi-Monte Carlo integration in the Hermite space $\cH_\r$.

\begin{definition}
\begin{enumerate}
	\item Multivariate integration in the Hermite space $\cH_\r$ is called \emph{polynomially tractable} if there exist $c,p,q\in\R^{+}$ such that
\begin{align}\label{def:tractability}
n_{\min}(\varepsilon,d;\cH_\r)\leq c\, d^{q}\, \varepsilon^{-p}
\end{align}
holds for all dimensions $d\in\N$ and for all $\varepsilon\in(0,1)$. The infima of the numbers $p$ and $q$ are called $\varepsilon$- and $d$-exponents of polynomial tractability.
\item If \eqref{def:tractability} holds with $q=0$, we say that multivariate integration in $\cH_\r$ is strongly polynomially tractable. The infimum of $p$ is called $\varepsilon$-exponent of strong polynomial tractability
\item If \eqref{def:tractability} does not hold, we say that multivariate integration in $\cH_\r$ is polynomially intractable.
\end{enumerate}
\end{definition}

We start with the Hermite space $\cH_{\p_{\alphab,\gammab}}$ of functions with polynomially decaying Hermite coefficients and we will show that the choice of the weights $\gammab$ is crucial for obtaining polynomial tractability. 

\begin{proposition}
Let $\gammab=(\gamma_1,\ldots,\gamma_d)\in(0,\infty)^d$ be a non-increasing sequence of weights and $\alphab=(\alpha_1,\ldots,\alpha_d)\in (1,\infty)^d$ a sequence of smoothness parameters. There exists a quasi-Monte Carlo method with point set $\cP$ such that the worst case error for QMC integration in the Hermite space $\cH_{\p_{\alphab,\gammab}}$ of functions with polynomially decaying coefficients  can be bounded from above by
\begin{align}\label{eq:polyupperbound}
e_{n,d}(\cP,\cH_{\p_{\alphab,\gammab}})\leq \frac{1}{\sqrt{n}}\,e^{\frac{\zeta(\alpha_{\min})}{2}\sum_{j=1}^{d}\gamma_j}
\end{align}
with $\alpha_{\min}:=\min\{\alpha_i: i=1,\ldots,d\}$.
\end{proposition}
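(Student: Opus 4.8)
The plan is to invoke Corollary~\ref{coro:existence} with $\r=\p_{\alphab,\gammab}$ and then estimate the resulting constant $c(d)=\sqrt{\sum_{\k\in\N_0^d\setminus\{\0\}}\p_{\alphab,\gammab}(\k)}$ from above by something of the stated exponential form. Since every term $\p_{\alphab,\gammab}(\k)$ is positive, I would first discard the restriction $\k\neq\0$ and bound
\begin{align*}
\sum_{\k\in\N_0^d\setminus\{\0\}}\p_{\alphab,\gammab}(\k)\ \le\ \sum_{\k\in\N_0^d}\p_{\alphab,\gammab}(\k)\ =\ \prod_{j=1}^{d}\bigl(1+\gamma_j\zeta(\alpha_j)\bigr),
\end{align*}
where the last equality is exactly the summability computation \eqref{eq:polysum} already established for the function $\p_{\alphab,\gammab}$.

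Next I would compare the factors with the one corresponding to $\alpha_{\min}$. Because the Riemann zeta function is strictly decreasing on $(1,\infty)$ and $\alpha_j\ge\alpha_{\min}$ for every $j$, we have $\zeta(\alpha_j)\le\zeta(\alpha_{\min})$, hence $1+\gamma_j\zeta(\alpha_j)\le 1+\gamma_j\zeta(\alpha_{\min})$. Applying the elementary inequality $1+t\le e^{t}$ to $t=\gamma_j\zeta(\alpha_{\min})\ge 0$ and multiplying over $j$ yields
\begin{align*}
\prod_{j=1}^{d}\bigl(1+\gamma_j\zeta(\alpha_j)\bigr)\ \le\ \prod_{j=1}^{d}\exp\bigl(\gamma_j\zeta(\alpha_{\min})\bigr)\ =\ \exp\Bigl(\zeta(\alpha_{\min})\sum_{j=1}^{d}\gamma_j\Bigr).
\end{align*}

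Taking square roots gives $c(d)\le \exp\bigl(\tfrac{\zeta(\alpha_{\min})}{2}\sum_{j=1}^{d}\gamma_j\bigr)$, and substituting this into the bound $e_{n,d}(\cP,\cH_{\p_{\alphab,\gammab}})\le c(d)/\sqrt{n}$ from Corollary~\ref{coro:existence} produces exactly \eqref{eq:polyupperbound}, with the same point set $\cP$ whose existence that corollary guarantees. There is no real obstacle here: the proof is a short chain of elementary estimates, and the only point that requires a word of justification is the monotonicity of $\zeta$ used to replace each $\alpha_j$ by $\alpha_{\min}$.
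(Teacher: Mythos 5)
Your proof is correct and follows essentially the same route as the paper: invoke Corollary~\ref{coro:existence}, bound the sum by the full product $\prod_{j=1}^{d}(1+\gamma_j\zeta(\alpha_j))$ via \eqref{eq:polysum}, apply $1+t\le e^t$ (the paper's $\ln(1+x)\le x$), and replace $\zeta(\alpha_j)$ by $\zeta(\alpha_{\min})$ using monotonicity of~$\zeta$. The only difference is the order in which the last two estimates are applied, which is immaterial.
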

\begin{proof}
From Corollary \ref{coro:existence} we know that there exists a quasi-Monte Carlo method such that the worst case error can be estimated by
\begin{align*}
e_{n,d}(\cP,\cH_{\p_{\alphab,\gammab}})\leq\frac{1}{\sqrt{n}}\left(\sum_{\k\in\N_0^d}\p_{\alphab,\gammab}(\k)-1\right)^{\frac{1}{2}}.
\end{align*}
Due to \eqref{eq:polysum} we get
\begin{align*}
e_{n,d}(\cP,\cH_{\p_{\alphab,\gammab}})&\leq \frac{1}{\sqrt{n}}\left(\prod_{j=1}^{d}\left(1+\gamma_j\zeta(\alpha_j)\right)\right)^{\frac{1}{2}}\\
&=\frac{1}{\sqrt{n}}\,e^{\frac{1}{2}\sum_{j=1}^{d}\ln\left(1+\gamma_j\zeta(\alpha_j)\right)}\\
&\leq\frac{1}{\sqrt{n}}\,e^{\frac{1}{2}\sum_{j=1}^{d}\gamma_j\zeta(\alpha_j)}.
\end{align*}
Denote the smallest smoothness parameter as $\alpha_{\min}$. Then the statement follows,
\begin{align*}
e_{n,d}(\cP,\cH_{\p_{\alphab,\gammab}})&\leq\frac{1}{\sqrt{n}}\,e^{\frac{\zeta(\alpha_{\min})}{2}\sum_{j=1}^{d}\gamma_j}.
\end{align*}
\end{proof}

The upper bound \eqref{eq:polyupperbound} allows us to give sufficient conditions on the weights concerning the tractability of multivariate integration in the Hermite space $\cH_{\p_{\alphab,\gammab}}$. Let $\gammab$ be an infinite non-increasing sequence of positive weights and $\alphab$ be an infinite sequence of smoothness parameters. Furthermore, assume that there exist a $\alpha_{\min}>1$ such that $\alpha_j>\alpha_{\min}$ holds for all $j\in\N$. From \eqref{eq:polyupperbound} we obtain that
\begin{align*}
n_{\min}(\varepsilon,d,\cH_{\p_{\alphab,\gammab}})\leq \varepsilon^{-2} e^{\zeta(\alpha_{\min})\sum_{j=1}^{d}\gamma_j}.
\end{align*}
In the case that $\sum_{j=1}^{\infty}\gamma_j<\infty$ holds, the upper bound of the information complexity does not depend on the dimension $d$ and thus we get a sufficient condition for strong polynomial tractability. If we suppose that $\limsup_{d}\sum_{j=1}^{d}\gamma_j /\ln(d)<\infty$, 
\begin{align*}
n_{\min}(\varepsilon,d;\cH_{\p_{\alphab,\gammab}})\leq \varepsilon^{-2} d^{\,\zeta(\alpha_{\min})\sum_{j=1}^{d}\gamma_j/\ln(d)}.
\end{align*}

\begin{theorem}\label{th:polytract}
Let $\gammab=(\gamma_1,\gamma_2,\ldots)\in(0,\infty)^\N$ be a non-increasing sequence of weights and $\alphab=(\alpha_1,\alpha_2,\ldots)\in (1,\infty)^\N$ be a sequence of smoothness parameters such that $\inf_d \alpha_d>1$.  Multivariate integration in the weighted Hermite space $\cH_{\p_{\alphab,\gammab}}$ of functions with polynomially decaying coefficients is
\begin{enumerate}
	\item strongly polynomially tractable, if $\sum_{j=1}^{\infty}\gamma_j<\infty$.
	\item polynomially tractable, if $A:=\limsup_d\frac{\sum_{j=1}^{d}\gamma_j}{\ln(d)}<\infty$.
\end{enumerate}
\end{theorem}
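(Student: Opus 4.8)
The plan is to read off both tractability statements directly from the explicit error estimate proved just before the theorem, so the proof is mostly bookkeeping. First I would note that for the product weight $\p_{\alphab,\gammab}$ one has $\p_{\alphab,\gammab}(\0)=\prod_{j=1}^{d}p_{\alpha_j,\gamma_j}(0)=1$, so the initial error equals $1$ and the normalization threshold in the definition of $n_{\min}$ is simply $\varepsilon$. Writing $\alpha_{\min}:=\inf_d\alpha_d$, the hypothesis $\inf_d\alpha_d>1$ guarantees that $\alpha_{\min}>1$ and hence $\zeta(\alpha_{\min})<\infty$ is a genuine constant independent of $d$; moreover $\zeta(\alpha_j)\le\zeta(\alpha_{\min})$ for every $j$. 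Combining Corollary \ref{coro:existence} with the Proposition preceding the theorem then yields a point set $\cP$ with $e_{n,d}(\cP,\cH_{\p_{\alphab,\gammab}})\le \frac{1}{\sqrt n}\,\exp\!\big(\tfrac{\zeta(\alpha_{\min})}{2}\sum_{j=1}^{d}\gamma_j\big)$, and therefore, as displayed in the text, $n_{\min}(\varepsilon,d;\cH_{\p_{\alphab,\gammab}})\le \varepsilon^{-2}\exp\!\big(\zeta(\alpha_{\min})\sum_{j=1}^{d}\gamma_j\big)$ for all $d\in\N$ and all $\varepsilon\in(0,1)$.

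For part (1), I would set $S:=\sum_{j=1}^{\infty}\gamma_j<\infty$. Since the partial sums are monotone, $\sum_{j=1}^{d}\gamma_j\le S$ for every $d$, and the above bound becomes $n_{\min}(\varepsilon,d;\cH_{\p_{\alphab,\gammab}})\le e^{\zeta(\alpha_{\min})S}\,\varepsilon^{-2}$. This is of the form $c\,d^{0}\,\varepsilon^{-2}$ with $c:=e^{\zeta(\alpha_{\min})S}$, which is precisely strong polynomial tractability, with $\varepsilon$-exponent at most $2$.

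For part (2), I would put $A:=\limsup_d\frac{1}{\ln d}\sum_{j=1}^{d}\gamma_j<\infty$ and fix an arbitrary $A'>A$. By definition of the limit superior there is an index $d_0\ge 2$ with $\sum_{j=1}^{d}\gamma_j\le A'\ln d$ for all $d\ge d_0$; for such $d$ the bound reads $n_{\min}(\varepsilon,d;\cH_{\p_{\alphab,\gammab}})\le \varepsilon^{-2}e^{\zeta(\alpha_{\min})A'\ln d}=\varepsilon^{-2}\,d^{\,\zeta(\alpha_{\min})A'}$. For the remaining finitely many dimensions $d\in\{1,\dots,d_0-1\}$ I would use the crude uniform bound $n_{\min}(\varepsilon,d;\cH_{\p_{\alphab,\gammab}})\le \varepsilon^{-2}\exp\!\big(\zeta(\alpha_{\min})\sum_{j=1}^{d_0-1}\gamma_j\big)$, and then enlarge the constant to $c:=\max\{1,\exp(\zeta(\alpha_{\min})\sum_{j=1}^{d_0-1}\gamma_j)\}$ to obtain $n_{\min}(\varepsilon,d;\cH_{\p_{\alphab,\gammab}})\le c\,d^{\,\zeta(\alpha_{\min})A'}\,\varepsilon^{-2}$ for all $d\in\N$. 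This is polynomial tractability with $d$-exponent at most $\zeta(\alpha_{\min})A'$ and $\varepsilon$-exponent at most $2$; letting $A'\downarrow A$ gives the $d$-exponent bounded by $\zeta(\alpha_{\min})A$.

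There is no real obstacle in this argument, since everything reduces to the averaging/existence bound of Corollary \ref{coro:existence}; the only points requiring a little care are (i) using the \emph{uniform} lower bound $\alpha_{\min}=\inf_d\alpha_d>1$ so that $\zeta(\alpha_{\min})$ is a legitimate $d$-independent constant, and (ii) in part (2), separating off the finitely many dimensions below the threshold $d_0$ and absorbing them into the constant $c$, because the $\limsup$ hypothesis only controls the tail of the sequence of dimensions.
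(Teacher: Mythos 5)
Your proposal is correct and follows essentially the same route as the paper: the theorem is read off from the averaging bound \eqref{eq:polyupperbound} (itself a consequence of Corollary \ref{coro:existence}), giving $n_{\min}(\varepsilon,d;\cH_{\p_{\alphab,\gammab}})\le \varepsilon^{-2}e^{\zeta(\alpha_{\min})\sum_{j=1}^{d}\gamma_j}$, from which both cases follow. Your explicit handling of the finitely many dimensions below the threshold $d_0$ in part (2) is a minor tightening of the paper's brief argument, not a different approach.
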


In both cases the $\varepsilon$-exponent is at most $2$, which comes from the averaging argument where we used the Gaussian-weighted root-mean-square error. However, we are convinced that there are quasi-Monte Carlo methods for which the $\varepsilon$-exponent can be improved. The $d$-exponent is given by $\zeta(\alpha_{\min})A$ with $A$ defined in Theorem \ref{th:polytract} and so it depends on both the weights and the smoothness parameters.

For the Hermite space with exponentially decaying coefficients we can show not only an upper bound but also a lower bound. With it, we are able to state sufficient and necessary conditions on the weight sequence for polynomial tractability.

\begin{proposition}
Let $\gammab=(\gamma_1,\ldots,\gamma_d)\in(0,\infty)^d$ be a non-increasing sequence of weights and $\omegab=(\omega_1,\ldots,\omega_d)\in (0,1)^d$ a sequence of smoothness parameters. There exists a quasi-Monte Carlo method with point set $\cP$ such that the worst case error for QMC integration in the Hermite space $\cH_{\epsilonb_{\omegab,\gammab}}$ of functions with exponentially decaying coefficients can be bounded from above by
\begin{align}\label{eq:expupperbound}
e_{n,d}(\cP,\cH_{\epsilonb_{\omegab,\gammab}})\leq \frac{1}{\sqrt{n}}\,e^{\frac{\omega_{\max}}{2(1-\omega_{\max})}\sum_{j=1}^{d}\gamma_j}
\end{align}
with $\omega_{\max}:=\max\{\omega_i: i=1,\ldots,d\}$.
\end{proposition}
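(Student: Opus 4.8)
The plan is to mirror exactly the argument used for the polynomially decaying case in the preceding proposition, now with the summability formula for $\epsilonb_{\omegab,\gammab}$ in place of \eqref{eq:polysum}. First I would invoke Corollary \ref{coro:existence} with $\r=\epsilonb_{\omegab,\gammab}$: it guarantees a point set $\cP$ with
\begin{align*}
e_{n,d}(\cP,\cH_{\epsilonb_{\omegab,\gammab}})\leq\frac{1}{\sqrt{n}}\left(\sum_{\k\in\N_0^d}\epsilonb_{\omegab,\gammab}(\k)-1\right)^{\frac12},
\end{align*}
since the initial error is $\epsilonb_{\omegab,\gammab}(\0)=1$.

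Next I would substitute the closed form of the sum already computed in the excerpt,
\begin{align*}
\sum_{\k\in\N_0^d}\epsilonb_{\omegab,\gammab}(\k)=\prod_{j=1}^{d}\left(1+\gamma_j\frac{\omega_j}{1-\omega_j}\right),
\end{align*}
and simply drop the $-1$ (it only decreases the left side), so that the bracket is at most $\prod_{j=1}^{d}\bigl(1+\gamma_j\omega_j/(1-\omega_j)\bigr)$. Writing this product as $\exp\bigl(\sum_{j=1}^d\ln(1+\gamma_j\omega_j/(1-\omega_j))\bigr)$ and using $\ln(1+x)\le x$ for $x\ge 0$ gives the bound $\exp\bigl(\sum_{j=1}^d \gamma_j\omega_j/(1-\omega_j)\bigr)$.

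Finally I would use that $x\mapsto x/(1-x)$ is increasing on $(0,1)$, hence $\omega_j/(1-\omega_j)\le \omega_{\max}/(1-\omega_{\max})$ for every $j$, yielding $\sum_{j=1}^d\gamma_j\omega_j/(1-\omega_j)\le \frac{\omega_{\max}}{1-\omega_{\max}}\sum_{j=1}^d\gamma_j$; taking the square root (which halves the exponent) produces the claimed estimate \eqref{eq:expupperbound}. There is no real obstacle here — every step is an elementary inequality — the only thing to be careful about is that all quantities are nonnegative so that dropping $-1$, applying $\ln(1+x)\le x$, and the monotonicity of $x/(1-x)$ are all legitimate; this is exactly parallel to the proof of \eqref{eq:polyupperbound}.
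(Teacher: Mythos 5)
Your proof is correct and follows essentially the same route as the paper: invoke Corollary \ref{coro:existence} with $\r=\epsilonb_{\omegab,\gammab}$, insert the product formula $\sum_{\k}\epsilonb_{\omegab,\gammab}(\k)=\prod_{j=1}^{d}\bigl(1+\gamma_j\omega_j/(1-\omega_j)\bigr)$, use $1+x\le e^x$, and finish with $\omega_j/(1-\omega_j)\le\omega_{\max}/(1-\omega_{\max})$. The paper's proof is just a more compressed version of these same steps, so there is nothing to add.
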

\begin{proof}
With Corollary \ref{coro:existence} and the definition of $\epsilonb_{\omegab,\gammab}$, see \eqref{def:exp}, we know that there exists a QMC method with
\begin{align*}
e_{n,d}(\cP,\cH_{\epsilonb_{\omegab,\gammab}})&\leq \frac{1}{\sqrt{n}}\,e^{\frac{1}{2}\sum_{j=1}^{\infty}\gamma_j\frac{\omega_j}{1-\omega_j}}.
\end{align*}
The statement directly follows, because $\omega_j/(1-\omega_j)<\omega_{\max}/(1-\omega_{\max})$ for all $j=1,\ldots,d$.
\end{proof}

We again get sufficient conditions to the weight sequence to achieve polynomial tractability in $\cH_{\epsilonb_{\omegab,\gammab}}$. We have that multivariate integration is strong polynomial tractable if the weight sequence is summable and the integration problem is polynomial tractable if $\limsup_d \sum_{j=1}\gamma_j/\ln(d)<\infty$. However, to show that these conditions are not only sufficient but also necessary, we need a lower bound on the worst case error. For that, we have to choose our weights small enough such that the reproducing kernel $K_{\epsilonb_{\omegab,\gammab}}$ is positive.

\begin{proposition}\label{prop:lowerbound}
Let $\gammab=(\gamma_1,\ldots,\gamma_d)\in(0,1)^d$ be a non-increasing sequence of weights and $\omegab=(\omega_1,\ldots,\omega_d)\in (0,1)^d$ a sequence of smoothness parameters. For any quasi-Monte Carlo method with point set $\cP$ the worst case error for QMC integration in the Hermite space $\cH_{\epsilon_{\omegab,\gammab}}$ of functions with exponentially decaying coefficients can be bounded from below by
\begin{align}\label{eq:explowerbound}
e_{n,d}(\cP,\cH_{\epsilonb_{\omegab,\gammab}})\geq \left(-1 +\frac{\prod_{j=1}^{d}\left(1+\gamma_jc(\omega_j)\right)}{n}\right)^{\frac{1}{2}}
\end{align}
with $c(\omega)=\frac{1-\sqrt{1-\omega^2}}{\sqrt{1-\omega^2}}$.
\end{proposition}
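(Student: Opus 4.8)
The plan is to read the worst case error off the exact formula \eqref{eq:worstcaseerror}. Since $\epsilonb_{\omegab,\gammab}(\0)=\prod_{j=1}^{d}\epsilon_{\omega_j,\gamma_j}(0)=1$, that formula gives
\begin{align*}
e_{n,d}^2(\cP,\cH_{\epsilonb_{\omegab,\gammab}})=-1+\frac{1}{n^2}\sum_{i=1}^{n}\sum_{j=1}^{n}K_{\epsilonb_{\omegab,\gammab}}(\x_i,\x_j),
\end{align*}
so everything reduces to bounding the double sum of kernel values from below. The hypothesis $\gammab\in(0,1)^d$ is precisely what makes this possible: by the representation \eqref{eq:kernelexp} each factor of $K_{\epsilonb_{\omegab,\gammab}}$ is of the form $1-\gamma_j+(\text{something positive})$, hence $K_{\epsilonb_{\omegab,\gammab}}>0$ everywhere.

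The first step is to use this positivity to keep only the diagonal terms, $\sum_{i,j}K_{\epsilonb_{\omegab,\gammab}}(\x_i,\x_j)\geq\sum_{i=1}^{n}K_{\epsilonb_{\omegab,\gammab}}(\x_i,\x_i)$. The second step is a uniform lower bound for the diagonal values: on the diagonal the factor $(x_j-y_j)^2$ in \eqref{eq:kernelexp} vanishes and the surviving exponential $\exp\!\big(\tfrac{\omega_j}{1+\omega_j}x_j^2\big)$ is at least $1$, so
\begin{align*}
K_{\epsilonb_{\omegab,\gammab}}(\x,\x)\geq\prod_{j=1}^{d}\left(1-\gamma_j+\frac{\gamma_j}{\sqrt{1-\omega_j^2}}\right)=\prod_{j=1}^{d}\bigl(1+\gamma_j c(\omega_j)\bigr),
\end{align*}
the last equality being the definition of $c(\omega_j)=\tfrac{1-\sqrt{1-\omega_j^2}}{\sqrt{1-\omega_j^2}}$; each factor $1+\gamma_j c(\omega_j)$ is positive, which legitimizes passing to the product. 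This bound is independent of $\x$, hence holds for every node $\x_i$.

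Combining the two steps gives $\tfrac{1}{n^2}\sum_{i,j}K_{\epsilonb_{\omegab,\gammab}}(\x_i,\x_j)\geq\tfrac{1}{n^2}\cdot n\prod_{j=1}^{d}(1+\gamma_j c(\omega_j))$, and substituting into the displayed expression for $e_{n,d}^2$ yields \eqref{eq:explowerbound} directly. There is no genuinely hard analytic step; the one point requiring care is invoking positivity of the kernel, which is exactly where the restriction $\gammab\in(0,1)^d$ enters and without which the off-diagonal terms could not be discarded. (If the bracket in \eqref{eq:explowerbound} happens to be negative the assertion is vacuous, so no case distinction is needed.)
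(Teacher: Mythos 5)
Your proof is correct and follows essentially the same route as the paper: apply \eqref{eq:worstcaseerror} with $\epsilonb_{\omegab,\gammab}(\0)=1$, use positivity of the kernel (guaranteed by $\gammab\in(0,1)^d$ via \eqref{eq:kernelexp}) to discard off-diagonal terms, and bound the diagonal values below by $\prod_{j=1}^{d}\bigl(1+\gamma_j c(\omega_j)\bigr)$ after dropping the exponential factor. No discrepancies to report.
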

\begin{proof}
With \eqref{eq:worstcaseerror} we have that
\begin{align*}
e_{n,d}^2(\cP,\cH_{\epsilonb_{\omegab,\gammab}})=-1+\frac{1}{n^2}\sum_{i,j=1}^{n}K_{\epsilonb_{\omegab,\gammab}}(\x_i,\x_j)
\end{align*}
holds for any QMC method with $\cP=\{\x_1,\ldots,\x_n\}$. Since all weights $\gamma_j$ are smaller than $1$, we know from \eqref{eq:kernelexp} that $K_{\epsilonb_{\omegab,\gammab}}(\x_i,\x_j)>0$ and so we neglect all terms with $i\neq j$. Hence,
\begin{align*}
e_{n,d}^2(\cP,\cH_{\epsilonb_{\omegab,\gammab}})\geq -1+\frac{1}{n}\sum_{i=1}^{n}K_{\epsilonb_{\omegab,\gammab}}(\x_i,\x_i).
\end{align*}
For any $i=1,\ldots,d$ we get from \eqref{eq:kernelexp} that
\begin{align*}
K_{\epsilonb_{\omegab,\gammab}}(\x_i,\x_i)&=\prod_{j=1}^{d}\left(1-\gamma_j+\gamma_j\frac{1}{\sqrt{1-\omega_j^2}}\exp\left(\frac{\omega_j}{1+\omega_j}x_{i,j}^2\right)\right)\\
&\geq \prod_{j=1}^{d}\left(1-\gamma_j+\gamma_j\frac{1}{\sqrt{1-\omega_j^2}}\right)
\end{align*}
where $x_{i,j}$ is the $j$-th entry of $\x_i$. Putting everything together, we get 
\begin{align*}
e_{n,d}^2(\cP,\cH_{\epsilonb_{\omegab,\gammab}})\geq-1+\frac{1}{n}\prod_{j=1}^{d}\left(1+\gamma_j\frac{1-\sqrt{1-\omega_j^2}}{\sqrt{1-\omega_j^2}}\right).
\end{align*}
\end{proof}

Now we assume that $\gammab$ is a non-increasing infinite sequence of weights $\gamma_j\in(0,1)$. Moreover, we suppose that $\omegab\in(0,1)^\N$ is a sequence such that $\omega_j\ge\omega_{\min}>0$ for all $j\in\N$. It follows from Proposition \ref{prop:lowerbound} that
\begin{align*}
n_{\min}(\varepsilon,d;\cH_{\epsilonb_{\omegab,\gammab}})&\geq\frac{\prod_{j=1}^{d}\left(1+\gamma_jc(\omega_j)\right)}{\varepsilon^2+1}\\
&\geq\frac{\prod_{j=1}^{d}\left(1+\gamma_jc(\omega_{\min})\right)}{\varepsilon^2+1}
\end{align*}
with $c(\omega)=\frac{1-\sqrt{1-\omega^2}}{\sqrt{1-\omega^2}}$. Under the assumption that $\gamma_j\ge\gamma_{\min}>0$ for all $j\in\N$, we get
\begin{align*}
n_{\min}(\varepsilon,d;\cH_{\epsilonb_{\omegab,\gammab}})&\geq\frac{\left(1+\gamma_{\min}c(\omega_{\min})\right)^d}{\varepsilon^2+1}
\end{align*}
and thus the information complexity grows exponentially in $d$. If we assume that $\gamma_j\rightarrow0$ and $\sum_{j=1}^{\infty}\gamma_j=\infty$, we can deduce from
\begin{align*}
\prod_{j=1}^{d}\left(1+\gamma_jc(\omega_j)\right)\geq e^{\tilde{c}c(\omega_0)\sum_{j=1}^{d}\gamma_j}
\end{align*}
with constant $\tilde{c}>0$, that $n_{\min}(\varepsilon,d;\cH_{\epsilonb_{\omegab,\gammab}})\rightarrow \infty$ as $d\rightarrow \infty$. Therefore $\sum_{j=1}^{\infty}\gamma_j<\infty$ is a necessary condition for strong polynomial tractability. 
Next we assume that $\gamma_j\rightarrow0$ for all $j\in\N$ and $\limsup_d \sum_{j=1}^{d}\gamma_j/\ln(d)=\infty$. Then we can use the estimate
\begin{align*}
\prod_{j=1}^{d}\left(1+\gamma_jc(\omega_j)\right)\geq d^{\,\bar{c}c(\omega_0)\sum_{j=1}^{d}\gamma_j/\ln(d)}
\end{align*}
with constant $\bar{c}>0$ to conclude that $n_{\min}$ grows faster than any
power of $d$. Thus we can conclude that $\limsup_d
\sum_{j=1}^{d}\gamma_j/\ln(d)<\infty$ is necessary to get polynomial
tractability. Now we summarize the results concerning the Hermite space with
exponentially decaying coefficients in the following theorem.

\begin{theorem}
Let $\gammab=(\gamma_1,\gamma_2,\ldots)\in(0,1)^\N$ be a non-increasing sequence of weights and $\omegab=(\omega_1,\omega_2,\ldots)\in (0,1)^\N$ be a sequence of smoothness parameters such that $\liminf_d \omega_d>0$ and $\limsup_d \omega_d<1$. Multivariate integration in the weighted Hermite space $\cH_{\epsilon_{\omegab,\gammab}}$ of functions with exponentially decaying coefficients is 
\begin{enumerate}
	\item strong polynomially tractable iff $\sum_{j=1}^{\infty}\gamma_j<\infty$,
	\item polynomially tractable iff $\limsup_d\frac{\sum_{j=1}^{d}\gamma_j}{\ln(d)}<\infty$.
\end{enumerate}
\end{theorem}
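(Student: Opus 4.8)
The plan is to assemble the theorem from the two one-sided estimates already in hand: the upper bound \eqref{eq:expupperbound} (which itself rests on Corollary \ref{coro:existence}) will give sufficiency of the weight conditions, and the lower bound of Proposition \ref{prop:lowerbound} will give their necessity. Throughout I use that the initial error equals $\r(\0)=\prod_{j=1}^{d}\epsilon_{\omega_j,\gamma_j}(0)=1$, so that $n_{\min}(\varepsilon,d;\cH_{\epsilonb_{\omegab,\gammab}})$ is governed by the requirement $e_{n,d}(\cP,\cH_{\epsilonb_{\omegab,\gammab}})\le\varepsilon$. A preliminary step is to upgrade the asymptotic hypotheses to genuinely uniform bounds: since $\limsup_d\omega_d<1$ and each $\omega_j<1$, the quantity $\omega^*:=\sup_j\omega_j$ satisfies $\omega^*<1$; since $\liminf_d\omega_d>0$ and each $\omega_j>0$, the quantity $\omega_{\min}:=\inf_j\omega_j$ satisfies $\omega_{\min}>0$. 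These two constants make all subsequent estimates independent of $d$.

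\textbf{Sufficiency.} Substituting $\omega_{\max}\le\omega^*$ into \eqref{eq:expupperbound} and solving $\tfrac{1}{\sqrt n}\exp\!\bigl(\tfrac{\omega^*}{2(1-\omega^*)}\sum_{j=1}^{d}\gamma_j\bigr)\le\varepsilon$ for $n$ yields
\[
n_{\min}(\varepsilon,d;\cH_{\epsilonb_{\omegab,\gammab}})\le \varepsilon^{-2}\exp\!\left(\frac{\omega^*}{1-\omega^*}\sum_{j=1}^{d}\gamma_j\right)+1.
\]
If $\sum_{j=1}^{\infty}\gamma_j<\infty$, the right-hand side is at most a constant times $\varepsilon^{-2}$, uniformly in $d$, giving strong polynomial tractability (with $\varepsilon$-exponent $\le 2$). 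If instead $A:=\limsup_d\tfrac{1}{\ln d}\sum_{j=1}^{d}\gamma_j<\infty$, then for all large $d$ one has $\sum_{j=1}^{d}\gamma_j\le(A+1)\ln d$, hence $\exp\!\bigl(\tfrac{\omega^*}{1-\omega^*}\sum_{j=1}^{d}\gamma_j\bigr)\le d^{\,\omega^*(A+1)/(1-\omega^*)}$, so $n_{\min}\le c\,\varepsilon^{-2}d^{q}$, which is polynomial tractability.

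\textbf{Necessity.} Since $c(\omega)=\tfrac{1-\sqrt{1-\omega^2}}{\sqrt{1-\omega^2}}$ is increasing on $(0,1)$, the uniform bound gives $c(\omega_j)\ge c(\omega_{\min})>0$ for all $j$, so Proposition \ref{prop:lowerbound} yields, for every QMC method and every $\varepsilon\in(0,1)$,
\[
n_{\min}(\varepsilon,d;\cH_{\epsilonb_{\omegab,\gammab}})\ge \frac{1}{1+\varepsilon^{2}}\prod_{j=1}^{d}\bigl(1+\gamma_j\,c(\omega_{\min})\bigr).
\]
Because $0<\gamma_j<1$, each factor lies in $[1,1+c(\omega_{\min})]$, and concavity of $x\mapsto\ln(1+x)$ on $[0,c(\omega_{\min})]$ furnishes a constant $\tilde c>0$ with $\ln\bigl(1+\gamma_j c(\omega_{\min})\bigr)\ge\tilde c\,\gamma_j$; hence $\prod_{j=1}^{d}\bigl(1+\gamma_j c(\omega_{\min})\bigr)\ge e^{\tilde c\sum_{j=1}^{d}\gamma_j}$. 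Consequently, if $\sum_{j=1}^{\infty}\gamma_j=\infty$ then $n_{\min}(\varepsilon,d;\cH_{\epsilonb_{\omegab,\gammab}})\to\infty$ as $d\to\infty$, so strong polynomial tractability fails; and if $\limsup_d\tfrac{1}{\ln d}\sum_{j=1}^{d}\gamma_j=\infty$ then along a subsequence $e^{\tilde c\sum_{j=1}^{d}\gamma_j}=d^{\,\tilde c\sum_{j=1}^{d}\gamma_j/\ln d}$ exceeds $d^{K}$ for every fixed $K$, so $n_{\min}$ grows faster than any power of $d$ and polynomial tractability fails. Taking contrapositives establishes the two ``iff'' statements. (If $\gammab$ does not converge to $0$, then $\gamma_j\ge\gamma_{\min}>0$ and the product already grows geometrically in $d$; this is covered uniformly by the argument above.)

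\textbf{Main obstacle.} There is no genuine analytic difficulty here — Corollary \ref{coro:existence} and Proposition \ref{prop:lowerbound} do the real work. The points that require care are (i) passing from the asymptotic hypotheses $\liminf_d\omega_d>0$, $\limsup_d\omega_d<1$ to the uniform bounds $\omega_{\min}>0$, $\omega^*<1$ needed for $d$-independence of the constants, and (ii) converting the product $\prod_{j=1}^{d}\bigl(1+\gamma_j c(\omega_{\min})\bigr)$ to the exponential $e^{\tilde c\sum_{j=1}^{d}\gamma_j}$ by a bound that is uniform over $\gamma_j\in(0,1)$, so that the subsequence argument for polynomial intractability goes through cleanly.
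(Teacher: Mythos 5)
Your proposal is correct and follows essentially the same route as the paper: sufficiency from the upper bound \eqref{eq:expupperbound} (via Corollary \ref{coro:existence}) and necessity from the lower bound of Proposition \ref{prop:lowerbound}, converting the product $\prod_{j=1}^{d}\bigl(1+\gamma_j c(\omega_{\min})\bigr)$ into $e^{\tilde c\sum_{j=1}^d\gamma_j}$. Your concavity bound $\ln\bigl(1+\gamma_j c(\omega_{\min})\bigr)\ge \gamma_j\ln\bigl(1+c(\omega_{\min})\bigr)$ even handles in one stroke the case distinction ($\gamma_j$ bounded away from $0$ versus $\gamma_j\to 0$) that the paper treats separately.
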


\section{Quasi-Monte Carlo integration and orthogonal transforms}\label{sec:orthogonal}

In this section we study the influence of orthogonal transforms on the efficiency of QMC algorithm. 

\subsection{Error analysis}\label{ssec:error}

First, let us summarize the results which we have until now. Due to
\eqref{eq:errorbound1} we know that the error of the quasi-Monte Carlo
algorithm is bounded by the sum of the discretization error and the integration
error. For the discretization error we have the estimate
\eqref{eq:discerrorbound} which states that the error can be reduced by
increasing the fineness of the discretization grid. The convergence rate of the
discretization step depends on the given time-continuous problem as well as on
the discretization method. Moreover, we denote by $f_d$ the $d$-dimensional
function which we obtain after the discretization and we assume that for every
$d$ the function $f_d$ belongs to a weighted Hermite space in which
multivariate integration is polynomially tractable. Then,
\begin{align}\label{eq:error}
\vert\E(g(B))-Q_{n,d}\vert\leq c_1d^{-q_1}+\|f_d\|_\r\, c_2d^{q_2}n^{-\frac{1}{2}}
\end{align}
with constants $c_1,c_2>0$. The exponent $q_1>0$ corresponds to the discretization method and $q_2>0$ is the $d$-exponent. However, it can happen that the norm of $f_d$ grows in $d$, i.e., the sequence $(\|f_d\|_\r)_{d\in\N}\rightarrow\infty$ as $d\rightarrow\infty$. Then, even if we have strong polynomial tractability in the Hermite space, i.e., $q_2=0$, the second term still gets bigger with increasing $d$, sometimes rendering bound \eqref{eq:error} useless.

So let us revisit the estimation of of the integration error. Because of the change-of-variable formula for multi-dimensional integration we have for any orthogonal transform $U:\R^d\longrightarrow\R^d$ and for any $f_d\in L^2(\R^d,\varphi_d)$ the identity
\begin{align*}
\int_{\R^d}f_d(\x)\varphi_d(\x)d\x=\int_{\R^d}f_d(U\x)\varphi_d(\x)d\x.
\end{align*}
Thus, instead of approximating the integral on the left-hand side we can
compute the integral on the right-hand side by a quasi-Monte Carlo rule. Then
we can replace the norm of $f_d$ by the norm of $f_d\circ U$ in the upper bound
of the integration error in \eqref{eq:error}. Now the idea is to choose the
orthogonal transform $U$ such that $\|f_d\circ U\|_\r$ increases slower in $d$
than the norm of $f_d$ does or to even get that $\|f_d\circ U\|_\r$ is bounded.

For that, we would like to know how an orthogonal transform affects the norm $\|\cdot\|_{\r}$ of the Hermite space $\cH_\r$, but at first we consider an example to show what one can achieve by applying an orthogonal transform to the integration problem.

\begin{example}
We are interested in the computation of $\E(\exp(B_1))$ where $B=(B_t)_{t\in[0,1]}$ is a standard Brownian motion. Note that this problem is of the form of the model problem presented in the introduction. The first attempt is to apply the forward method to discretize the Brownian motion on the time grid $\frac{1}{d},\frac{2}{d},\ldots,\frac{d}{d}$. Then the discretized problem is given by the $d$-dimensional function $f_d(\x)=\exp(\frac{1}{\sqrt{d}}\sum_{j=1}^{d}x_j)$ which belongs to $\cH_{\p_{\alphab,\gammab}}$. We further assume that $\alphab$ is a $d$-dimensional vector of integers larger than $1$. For the $\k$-th Hermite coefficient we get, by using the exponential generating function,
\begin{align*}
\widehat{f_d}(\k)&=\int_{\R^d}e^{\frac{1}{\sqrt{d}}\sum_{j=1}^{d}x_j}H_\k(\x)\varphi_d(\x)d\x\\
&=\prod_{j=1}^{d}\int_{\R}e^{\frac{1}{\sqrt{d}}x_j}H_{k_j}(x_j)\varphi(x_j)dx_j\\
&=e^{\frac{1}{2}}\prod_{j=1}^{d}\int_{\R}\sum_{\ell=0}^{\infty}\frac{1}{\sqrt{\ell!d^{\ell}}}H_\ell(x_j)H_{k_j}(x_j)\varphi(x_j)dx_j\\
&=e^{\frac{1}{2}}\prod_{j=1}^{d}\frac{1}{\sqrt{k_j!\,d^{k_j}}}\\
&=e^{\frac{1}{2}}\frac{1}{\sqrt{\k!\,d^{|\k|}}}.
\end{align*}
The norm of $f_d$ is given by
\begin{align*}
\|f_d\|_{\p_{\alphab,\gammab}}^2&=e\prod_{j=1}^{d}\left(1+\gamma_j^{-1}\sum_{k=1}^{\infty}k^{\alpha_j}\frac{1}{k!d^k}\right)\\
&=e\prod_{j=1}^{d}\left(1+\gamma_j^{-1}\left(\frac{1}{d}m_{\alpha_j}(1/d)e^{1/d}\right)\right)\\
&=e^{1+\sum_{j=1}^{d}\ln\left(1+\gamma_j^{-1}\left(\frac{1}{d}m_{\alpha_j}(1/d)e^{1/d}\right)\right)}
\end{align*}
where $m_\alphab$ is a polynomial of degree $\alpha-1$. Moreover, $m_\alphab(1/d)$ is monotonically decreasing in $d$ and $m_\alpha(0)=1$. If we now choose the weights $\gammab$ as $\gamma_j=j^{-2}$, we would achieve that integration in $\cH_{\p_{\alphab,\gammab}}$ is strong polynomially tractable, due to Theorem \ref{th:polytract}. However, we can bound $\|f_d\|_{\p_{\alphab,\gammab}}$ from below,
\begin{align*}
\|f_d\|_{\p_{\alphab,\gammab}}^2&\geq e^{1+\sum_{j=1}^{d}\ln\left(\frac{j^2}{d}\right)}\\
&=e^{1+(2\ln(d!)-d\ln(d))}\\
&=e \frac{(d!)^2}{d^d}.
\end{align*} 
Since $\frac{(d!)^2}{d^d}\rightarrow\infty$ as $d\rightarrow\infty$, the
sequence of norms $(\|f_d\|_{\p_{\alphab,\gammab}})_{d\in\N}$ grows in $d$.
Thus, even though we have strong polynomial tractability of the integration
problem the upper bound of the integration error is not independent of
the dimension, because the norm still depends on $d$. 

Now let us revisit the problem: Instead of the forward method we use the
Brownian bridge construction method to discretize the Brownian motion on the
time grid $\frac{1}{d},\frac{2}{d},\ldots,\frac{d}{d}$. In other words, we
apply an orthogonal transform $U$, namely the inverse Haar transform, to the
integration problem. Then the discretized problem is given by the function
$(f_d\circ U)(\x)=\exp(x_1)$ which belongs to the same Hermite space as
$f_d$. The $\k$-the Hermite coefficient is given by \begin{align*}
\widehat{f_d\circ U}(\k)=\begin{cases}\sqrt{\frac{e}{k_1!}}&\textnormal{if } k_2=\ldots=k_d=0\\0&\textnormal{else}\end{cases}
\end{align*}
and consequently,
\begin{align*}
\|f_d\circ U\|_{\p_{\alphab,\gammab}}^2&=e+\gamma_1^{-1}\sum_{k=1}^{\infty}k_1^{\alpha_1}\frac{e}{k_1!}\\
&=e\left(1+\gamma_1^{-1}m_{\alpha_1}(1)e\right).
\end{align*}
This means that the upper bound of the integration error does not depend on $d$ if we choose our weights $\gammab$ such that strong polynomial tractability holds.
\end{example}

\subsection{Orthogonal transforms}\label{ssec:orthogonal}

As we have seen above, the convergence of a quasi-Monte Carlo algorithm can be influenced by applying an orthogonal transform to the integration problem. However, it is crucial to choose the orthogonal transform tailored to the integration problem to improve the efficiency of the quasi-Monte Carlo algorithm, but in general this is not an easy problem at all. A first attempt to get a better understanding of the behavior of the orthogonal transforms is to study the relation of the Hermite coefficients of $f$ and $f\circ U$.

For an orthogonal transform $U$ of the $\R^d$ we define the mapping $\cA_U:L^2(\R^d,\varphi_d)\longrightarrow L^2(\R^d,\varphi_d)$ as the composition of $f$ and $U$, i.e., $\cA_U f:=f\circ U$. 

\begin{theorem}\label{th:automorphism}
Let $U$ be an orthogonal transform of the $\R^d$. The mapping $\cA_U$ is a Hilbert space automorphism on $L^2(\R^d,\varphi_d)$ and furthermore $(\cA_U)^{-1}=\cA_{U^{-1}}$.
\end{theorem}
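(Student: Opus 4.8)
The plan is to verify directly that $\cA_U$ is a well-defined linear isometry of $L^2(\R^d,\varphi_d)$ onto itself and then to exhibit $\cA_{U^{-1}}$ as its two-sided inverse. The one fact driving everything is the orthogonal invariance of the standard Gaussian measure: since $(U\x)\tr(U\x)=\x\tr U\tr U\x=\x\tr\x$, we have $\varphi_d(U\x)=\varphi_d(\x)$ for every $\x\in\R^d$, and moreover $|\det U|=1$.

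First I would settle well-definedness on equivalence classes. Because $U$ is a linear bijection with $|\det U|=1$, it preserves Lebesgue measure and in particular maps Lebesgue-null sets to Lebesgue-null sets; hence $f=g$ almost everywhere implies $f\circ U=g\circ U$ almost everywhere, so $\cA_U$ descends to a map on $L^2(\R^d,\varphi_d)$. Linearity is immediate from $(\lambda f+\mu g)\circ U=\lambda(f\circ U)+\mu(g\circ U)$. Next, for $f,g\in L^2(\R^d,\varphi_d)$ the change-of-variables formula together with $\varphi_d\circ U^{-1}=\varphi_d$ and $|\det U^{-1}|=1$ gives
\begin{align*}
\langle \cA_Uf,\cA_Ug\rangle_{L^2(\R^d,\varphi_d)}
&=\int_{\R^d}f(U\x)g(U\x)\varphi_d(\x)\,d\x
=\int_{\R^d}f(\y)g(\y)\varphi_d(U^{-1}\y)\,d\y\\
&=\int_{\R^d}f(\y)g(\y)\varphi_d(\y)\,d\y=\langle f,g\rangle_{L^2(\R^d,\varphi_d)}.
\end{align*}
In particular $\|\cA_Uf\|_{L^2(\R^d,\varphi_d)}=\|f\|_{L^2(\R^d,\varphi_d)}<\infty$, so $\cA_U$ genuinely maps $L^2(\R^d,\varphi_d)$ into itself and preserves the inner product.

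It remains to treat bijectivity and identify the inverse. Since $U^{-1}=U\tr$ is again an orthogonal transform of $\R^d$, the operator $\cA_{U^{-1}}$ is well-defined on $L^2(\R^d,\varphi_d)$ by the previous steps. For any $f$,
\begin{align*}
\cA_U\cA_{U^{-1}}f=(f\circ U^{-1})\circ U=f\circ(U^{-1}U)=f,\qquad
\cA_{U^{-1}}\cA_Uf=(f\circ U)\circ U^{-1}=f,
\end{align*}
so $\cA_U$ is bijective with $(\cA_U)^{-1}=\cA_{U^{-1}}$. A linear, inner-product-preserving bijection of a Hilbert space onto itself is by definition a Hilbert space automorphism, which finishes the argument.

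There is essentially no hard step here: the only points requiring a moment's care are the well-definedness on $L^2$-equivalence classes (that $U$ carries null sets to null sets) and keeping track that $\varphi_d$ is invariant under $U$ on the nose rather than merely up to a Jacobian factor — both of which are consequences of orthogonality and $|\det U|=1$.
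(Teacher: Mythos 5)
Your proposal is correct and follows essentially the same route as the paper: orthogonal invariance of $\varphi_d$ plus the change of variables gives that $\cA_U$ preserves the $L^2(\R^d,\varphi_d)$ inner product, and composing with $\cA_{U^{-1}}$ on both sides identifies the inverse. You merely spell out some details the paper asserts (well-definedness on equivalence classes, surjectivity via the explicit two-sided inverse), which is fine but not a different argument.
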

\begin{proof}
Let $f,g\in L^2(\R^d,\varphi_d)$. Then
\begin{align*}
\langle \cA_Uf,\cA_Ug\rangle_{L^2(\R^d,\varphi_d)}&=\int_{\R^d}(\cA_Uf)(\x)(\cA_Ug)(\x)\varphi_d(\x)d\x\\
&=\int_{\R^d}f(U\x)g(U\x)\varphi_d(\x)d\x\\
&=\int_{\R^d}f(\x)g(\x)\varphi_d(\x)d\x\\
&=\langle f,g\rangle_{L^2(\R^d,\varphi_d)}
\end{align*}
and so we have that $\cA_U$ preserves the inner product. Moreover, we have that $\cA_U$ is linear as well as surjective. Thus, $\cA_U$ is a Hilbert space automorphism on $L^2(\R^d,\varphi_d)$. Since for any $\x\in\R^d$
\begin{align*}
\cA_U\cA_{U^{-1}}f(\x)=f(UU^{-1}\x)=f(U^{-1}U\x)=\cA_{U^{-1}}\cA_Uf(\x),
\end{align*}
the inverse of $\cA_U$ is given by $A_{U^{-1}}$.
\end{proof}

Apart from this, we look for a nice representation of $\cA_U$. For that, we define sub-spaces of the $L^2(\R^d,\varphi_d)$ as the linear span of the Hermite polynomials of the same degree, i.e., $\cH^{(m)}:=\lin\{H_{\k}(\x): |\k|=m\}$ for any $m\in\N_0$. Since the Hermite polynomials form a basis of the $L^2(\R^d,\varphi_d)$, it is easy to deduce that
\begin{align*}
L^2(\R^d,\varphi_d)=\overline{\bigoplus_{m=0}^{\infty}\cH^{(m)}}.
\end{align*} 
Recall that the $\k$-th Hermite polynomial is given by
\begin{align*}
H_\k(\x)=\frac{\partial^{|\k|}}{\partial \t^\k}G(\x,\t)\big\vert_{\t=\0}
\end{align*}
where $G$ is the exponential generator function of the Hermite polynomials. For any orthogonal transform $U$ of the $\R^d$ we know that 
\begin{align*}
G(U\x,\t)&=e^{(U\x)\tr\t-\frac{\t\tr\t}{2}}\\
&=e^{\x\tr(U\tr\t)-\frac{(U\tr\t)\tr(U\tr\t)}{2}}\\
&=G(\x,U\tr\t)
\end{align*}
holds, and so we obtain
\begin{align*}
\frac{\partial}{\partial t_j}G(U\x,\t)\Big\vert_{\t=\0}&=\frac{\partial}{\partial t_j}G(\x,U\tr\t)\Big\vert_{\t=\0}\\
&=\sum_{k=1}^{d}U_{jk}\left(\frac{\partial}{\partial t_k}G(\x,\t)\Big\vert_{\t=\0}\right).
\end{align*}
In the same way, we have for a sequence $(\beta_1,\ldots,\beta_m)\in\{1,\ldots,d\}^m$ of indices
\begin{align}\label{eq:transformgenerating}
\frac{\partial}{\partial t_{\beta_m}}\cdots\frac{\partial}{\partial t_{\beta_1}}G(U\x,\t)\Big\vert_{\t=\0}=\sum_{\xi_1,\ldots,\xi_m=1}^{d}U_{\beta_m \xi_m}\cdots U_{\beta_1 \xi_1}\left(\frac{\partial}{\partial t_{\xi_m}}\cdots\frac{\partial}{\partial t_{\xi_1}}G(\x,\t)\Big\vert_{\t=\0}\right).
\end{align}
Since there are only derivatives of order $m$ involved, $H_{\k}(U\x)$ with $|\k|=m$ can be written as a linear combination of Hermite polynomials of degree $m$. Therefore, we can state the following lemma. 

\begin{lemma}
The restriction of $\cA_U$ to $\cH^{(m)}$, denoted by $\cA_{U,m}$, is a Hilbert
space automorphism of $\cH^{(m)}$.  
\end{lemma}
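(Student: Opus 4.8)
The plan is to build $\cA_{U,m}$ directly out of two facts already available: that $\cA_U$ is a Hilbert space automorphism of $L^2(\R^d,\varphi_d)$ with inverse $\cA_{U^{-1}}$ (Theorem \ref{th:automorphism}), and that $\cA_U$ leaves each subspace $\cH^{(m)}$ invariant, which is exactly the content of the generating-function computation culminating in \eqref{eq:transformgenerating}. Granting these, the statement reduces to elementary bookkeeping.

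First I would make the invariance explicit: for every multi-index $\k$ with $|\k|=m$, the discussion preceding the lemma shows that $\cA_U H_\k = H_\k(U\,\cdot\,)$ is a linear combination of Hermite polynomials of degree $m$, and since the $H_\k$ with $|\k|=m$ span $\cH^{(m)}$, linearity of $\cA_U$ gives $\cA_U(\cH^{(m)})\subseteq\cH^{(m)}$. Hence $\cA_{U,m}:=\cA_U|_{\cH^{(m)}}$ is a well-defined linear endomorphism of $\cH^{(m)}$. It preserves the inner product, because $\cA_U$ does, and is therefore injective. For surjectivity I would apply the same invariance argument to the orthogonal transform $U^{-1}$, obtaining $\cA_{U^{-1}}(\cH^{(m)})\subseteq\cH^{(m)}$; since $\cA_{U^{-1}}=(\cA_U)^{-1}$ by Theorem \ref{th:automorphism}, the restriction $\cA_{U^{-1}}|_{\cH^{(m)}}$ is a two-sided inverse of $\cA_{U,m}$ on $\cH^{(m)}$. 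Thus $\cA_{U,m}$ is a bijective linear isometry of $\cH^{(m)}$, i.e. a Hilbert space automorphism.

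Alternatively, once injectivity and invariance are known, one may simply invoke finite-dimensionality of $\cH^{(m)}$ — it is spanned by the finitely many $H_\k$ with $|\k|=m$ — so that an injective linear endomorphism is automatically surjective; this is the shorter route and I would probably use it. Either way, there is no genuine obstacle: the only substantive point is the invariance $\cA_U(\cH^{(m)})\subseteq\cH^{(m)}$, and that has essentially been established already in the paragraph that introduces \eqref{eq:transformgenerating}. The rest is a two-line deduction from Theorem \ref{th:automorphism}.
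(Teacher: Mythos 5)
Your proposal is correct and follows essentially the same route as the paper, which derives the lemma directly from the invariance $\cA_U(\cH^{(m)})\subseteq\cH^{(m)}$ established via the generating-function identity \eqref{eq:transformgenerating} together with Theorem \ref{th:automorphism}; the paper leaves the bijectivity bookkeeping implicit, and your completion of it (via $\cA_{U^{-1}}$ or via finite-dimensionality of $\cH^{(m)}$) is exactly the intended argument.
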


Note that the sum in \eqref{eq:transformgenerating} does not represent the left-hand side in a unique way, because there are many $\betab\in\{1,\ldots,d\}^m$ which correspond to the same multi-index. So we consider the function $S:\{1,\ldots,d\}^m\longrightarrow\N_0^d$, given by
\begin{align*}
S((\beta_1,\ldots,\beta_m)):=\left(\#\{k: \beta_k=1\},\ldots,\#\{k: \beta_k=d\}\right),
\end{align*}
which gives for each sequence $(\beta_1,\ldots,\beta_m)$ the corresponding multi-index.

\begin{lemma}\label{lem:sfunction}
Let $\k\in\N_0^d$ be a multi-index with $|\k|=m$. Then 
\begin{align*}
\#\{\betab\in\{1,\ldots,d\}^m: S(\betab)=\k\}=\frac{|\k|!}{\k!}.
\end{align*}
\end{lemma}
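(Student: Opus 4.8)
The plan is to recognize the quantity in question as a multinomial coefficient and verify it by a direct counting argument. First I would observe that a sequence $\betab=(\beta_1,\ldots,\beta_m)\in\{1,\ldots,d\}^m$ satisfies $S(\betab)=\k$ precisely when, for each $i\in\{1,\ldots,d\}$, exactly $k_i$ of its entries equal $i$. Thus $\betab$ is in bijective correspondence with the ordered decomposition of the index set $\{1,\ldots,m\}$ into disjoint blocks $B_1,\ldots,B_d$, where $B_i:=\{t:\beta_t=i\}$ and $|B_i|=k_i$; the hypothesis $|\k|=m$ guarantees $B_1\cup\cdots\cup B_d=\{1,\ldots,m\}$. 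So it suffices to count such ordered decompositions.

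Next I would count these by choosing the blocks successively: there are $\binom{m}{k_1}$ choices for $B_1$, then $\binom{m-k_1}{k_2}$ choices for $B_2$ among the remaining indices, and so on, which yields
\begin{align*}
\#\{\betab\in\{1,\ldots,d\}^m:S(\betab)=\k\}=\prod_{i=1}^{d}\binom{m-k_1-\cdots-k_{i-1}}{k_i}=\prod_{i=1}^{d}\frac{(m-k_1-\cdots-k_{i-1})!}{k_i!\,(m-k_1-\cdots-k_i)!}.
\end{align*}
The product telescopes, the numerator of each factor cancelling the denominator of the next, leaving $\frac{m!}{k_1!\cdots k_d!}=\frac{|\k|!}{\k!}$, which is the claim. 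Cases with some $k_i=0$ require no separate treatment since $\binom{n}{0}=1$, and the degenerate case $\k=\0$ (so $m=0$) gives both sides equal to $1$.

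Alternatively, one could argue by induction on $m$: splitting the sequences according to the value of $\beta_m$ shows that $N(\k):=\#\{\betab:S(\betab)=\k\}$ satisfies $N(\k)=\sum_{i:\,k_i\ge1}N(\k-e_i)$, and a short computation shows that $\k\mapsto\frac{|\k|!}{\k!}$ satisfies the same recursion with the same value $1$ at $\k=\0$. There is no genuine obstacle here; the only point that needs a little care is the bookkeeping of the telescoping product (equivalently, checking the base case and that the $\binom{n}{0}$ factors are handled correctly), which is routine.
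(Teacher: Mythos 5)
Your proof is correct and follows the same route as the paper, which simply identifies $\frac{|\k|!}{\k!}$ as the multinomial coefficient and appeals to elementary combinatorics; you merely spell out that counting argument (blocks chosen successively, telescoping product) in full detail. Nothing further is needed.
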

\begin{proof}
This can be seen by noting that the right-hand side is just a multinomial
coefficient, $\frac{|\k|!}{\k!}=\binom{k_1+\ldots+k_m}{k_1,\ldots, k_m}$, and by
using elementary combinatorics.
\end{proof}

To use the representation of equation \eqref{eq:transformgenerating} we want a space that takes account of the order of differentiation,
\begin{align*}
\cK^{(m)}:=\ell^2(\{1,\ldots,d\}^m)=\{w:\{1,\ldots,d\}^m\longrightarrow\R\}
\end{align*}
with norm
\begin{align*}
\|w\|_{\ell^2}=\sum_{\betab\in\{1,\ldots,d\}^m}w_\betab^2\,.
\end{align*}
Denote the canonical basis of $\cK^{(m)}$ by $(b_\betab)_{\betab\in\{1,\ldots,d\}^m}$. Next we define the linear operator,
\begin{align*}
\cJ_{m}:\,&\cH^{(m)}\longrightarrow\cK^{(m)}\\
&H_\k\longmapsto \sqrt{\frac{\k!}{|\k|!}}\sum_{S(\betab)=\k}b_\betab\,,
\end{align*}
the adjoint operator of which is given by
\begin{align*}
\cJ_{m}^*:\,&\cK^{(m)}\longrightarrow\cH^{(m)}\\
&b_\betab\longmapsto \sqrt{\frac{S(\betab)!}{|S(\betab)|!}}H_{S(\betab)}.
\end{align*}

\begin{proposition}
The linear operator $\cJ_{m}$ is an isometry and consequently, $\cJ_{m}^{*}\cJ_{m}=\id_{\cH^{m}}$
\end{proposition}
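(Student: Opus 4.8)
The plan is to show that $\cJ_m$ maps the orthonormal basis of $\cH^{(m)}$ formed by the degree-$m$ Hermite polynomials onto an orthonormal system in $\cK^{(m)}$. A linear map that preserves inner products is by definition an isometry, and for any isometry $V$ one has $\langle V^{*}Vf,g\rangle=\langle Vf,Vg\rangle=\langle f,g\rangle$ for all $f,g$; hence once $\cJ_m$ is known to be an isometry, the relation $\cJ_m^{*}\cJ_m=\id_{\cH^{(m)}}$ follows immediately. (Note that the adjoint exists and is in fact written down explicitly above.)

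First I would record the two facts about bases. By Proposition~\ref{prop:ONB} the family $(H_\k)_{\k\in\N_0^d}$ is an orthonormal basis of $L^2(\R^d,\varphi_d)$; restricting to $|\k|=m$ yields an orthonormal basis of $\cH^{(m)}$. On the other side, the canonical basis $(b_\betab)_{\betab\in\{1,\ldots,d\}^m}$ of $\cK^{(m)}=\ell^2(\{1,\ldots,d\}^m)$ is orthonormal by construction, so $\langle b_\betab,b_{\betab'}\rangle_{\ell^2}=\delta_{\betab\betab'}$.

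Next I would compute, for multi-indices $\k,\k'$ with $|\k|=|\k'|=m$,
\begin{align*}
\langle \cJ_m H_\k,\cJ_m H_{\k'}\rangle_{\ell^2}
&=\sqrt{\frac{\k!}{m!}}\,\sqrt{\frac{\k'!}{m!}}\sum_{S(\betab)=\k}\ \sum_{S(\betab')=\k'}\langle b_\betab,b_{\betab'}\rangle_{\ell^2}\\
&=\sqrt{\frac{\k!\,\k'!}{(m!)^{2}}}\;\#\bigl\{\betab\in\{1,\ldots,d\}^m: S(\betab)=\k=\k'\bigr\}.
\end{align*}
If $\k\neq\k'$ the index sets $\{\betab:S(\betab)=\k\}$ and $\{\betab':S(\betab')=\k'\}$ are disjoint, so the double sum, and hence the inner product, is $0$. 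If $\k=\k'$, Lemma~\ref{lem:sfunction} gives $\#\{\betab:S(\betab)=\k\}=m!/\k!$, so the expression collapses to $\tfrac{\k!}{m!}\cdot\tfrac{m!}{\k!}=1$. Thus $\langle \cJ_m H_\k,\cJ_m H_{\k'}\rangle_{\ell^2}=\delta_{\k\k'}$, i.e.\ $\cJ_m$ sends the orthonormal basis $(H_\k)_{|\k|=m}$ to an orthonormal system; by bilinearity it then preserves the inner product of $\cH^{(m)}$ and is an isometry. Feeding this into $\langle \cJ_m^{*}\cJ_m f,g\rangle=\langle \cJ_m f,\cJ_m g\rangle_{\ell^2}=\langle f,g\rangle$ for all $f,g\in\cH^{(m)}$ yields $\cJ_m^{*}\cJ_m=\id_{\cH^{(m)}}$. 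Alternatively one can check this last identity directly on basis elements using the displayed formula for $\cJ_m^{*}$.

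I do not anticipate a genuine obstacle: the whole argument is a short consequence of orthonormality of the two canonical bases together with the counting identity of Lemma~\ref{lem:sfunction}. The only place demanding a little care is the bookkeeping in the double sum over the fibres of $S$ — that distinct multi-indices have disjoint fibres, and that the fibre of $\k$ has exactly $m!/\k!$ elements — which is precisely what Lemma~\ref{lem:sfunction} supplies.
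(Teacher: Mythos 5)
Your proof is correct and follows essentially the same route as the paper: the key ingredient in both is the counting identity of Lemma~\ref{lem:sfunction}, the paper applying it to show $\|\cJ_m f\|_{\cK^{(m)}}=\|f\|_{\cH^{(m)}}$ for a general $f=\sum_{|\k|=m}f_\k H_\k$, while you apply it to verify $\langle \cJ_m H_\k,\cJ_m H_{\k'}\rangle_{\ell^2}=\delta_{\k\k'}$ on basis elements and extend by linearity. The two computations are interchangeable, and your derivation of $\cJ_m^{*}\cJ_m=\id_{\cH^{(m)}}$ from the isometry property is standard and sound.
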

\begin{proof}
Let $f\in\cH^{(m)}$ with $f(\x)=\sum_{|\k|=m}f_{\k}H_\k(\x)$. Then,
\begin{align*}
\|f\|_{\cH^{(m)}}=\sum_{|\k|=m}f_{\k}^2
\end{align*}
and
\begin{align*}
\cJ_{m}f&=\sum_{|\k|=m}f_\k\cJ_m H_\k=\sum_{|\k|=m}f_\k\sqrt{\frac{\k!}{|\k|!}}\sum_{S(\betab)=\k} b_\betab\\
&=\sum_{\betab\in\{1,\ldots,d\}^m}\sum_{\k=S(\betab)}f_\k\sqrt{\frac{\k!}{|\k|!}}\,b_\betab\\
&=\sum_{\betab\in\{1,\ldots,d\}^m}f_{S(\betab)}\sqrt{\frac{S(\betab)!}{|S(\betab)|!}}\,b_\betab.
\end{align*}
So we have
\begin{align*}
\|\cJ_m f\|_{\cK^{(m)}}&=\sum_{\betab\in\{1,\ldots,d\}^m}f_{S(\betab)}^2\frac{S(\betab)!}{|S(\betab)|!}\\
&=\sum_{|\k|=m}\sum_{S(\betab)=\k}f_{S(\betab)}^2\frac{S(\betab)!}{|S(\betab)|!}\\
&=\sum_{|\k|=m}f_{\k}^2\sum_{S(\betab)=\k}\frac{S(\betab)!}{|S(\betab)|!},
\end{align*}
With Lemma \ref{lem:sfunction} we get that $\sum_{S(\betab)=\k}\frac{S(\betab)!}{|S(\betab)|!}=1$ and consequently, $\|f\|_{\cH^{(m)}}=\|\cJ_m f\|_{\cK^{(m)}}$.
\end{proof}

According to equation \eqref{eq:transformgenerating} the application of an orthogonal transform $U$ can be represented in the space $\cK^{(m)}$ as a matrix-vector multiplication with the matrix given by the $m$-fold Kronecker product of $U$: 
\begin{align*}
U^{\otimes m}:=\underbrace{U\otimes\cdots\otimes U}_{m \textnormal{ times}}.
\end{align*}

Therefore, the mapping $\cA_{U,m}$ is of the form
\begin{align*}
\cA_{U,m}=\cJ_{m}^{*} U^{\otimes m}\cJ_{m}
\end{align*}
which can be illustrated by the commutative diagram:

\begin{align*}
\begin{CD}
\cH^{(m)}@>~\cA_{U,m}~>>\cH^{(m)}\\
@V\cJ_{m}VV				@AA\cJ_{m}^{*}A\\
\cK^{(m)}@>>~U^{\otimes m}~>\cK^{(m)}
\end{CD}
\end{align*}

So we have a nice representation of $\cA_{U,m}$ on the space spanned by the Hermite polynomials of degree $m$ and we can deduce a similar representation of $\cA_U$ on $L^2(\R^d,\varphi_d)$. This can be achieved by using
\begin{align*}
\cA_U=\bigoplus_{m=0}^{\infty}\cA_{U,m}.
\end{align*}
We define the space $\cK:=\overline{\bigoplus_{m=0}^{\infty}\cK^{(m)}}$ and the isometry $\cJ:L^2(\R^d,\varphi_d)\longrightarrow\cK$ by
\begin{align*}
\cJ:=\bigoplus_{m=0}^{\infty}\cJ_{m}\,,
\end{align*}
the adjoint operator of which is given by
\begin{align*}
\cJ^*:=\bigoplus_{m=0}^{\infty}\cJ_{m}^*\,.
\end{align*}
This means that we can represent the mapping $A_U$ by
\begin{align}\label{eq:mapping}
\cA_U=\cJ^*\bigoplus_{m=0}^{\infty}U^{\otimes m} \cJ
\end{align}
and we have the commutative diagram:
\begin{align*}
\begin{CD}
L^2(\R^d,\varphi_d)@>~~~~~\cA_{U}~~~~~>>L^2(\R^d,\varphi_d)\\
@V\cJ VV				@AA \cJ^{*}A\\
\cK @>>~\bigoplus_{m\geq0}U^{\otimes m}~>\cK
\end{CD}
\end{align*}
In addition to being an elegant representation of $\cA_U$, equation 
\eqref{eq:mapping} provides us with a formula for computing
the Hermite coefficients of $f\circ U$ from those of $f$. 
For a better illustration we consider an example.

\begin{example}
Let $f:\R^2\longrightarrow\R$ with Hermite expansion $f(\x)=\sum_{\k\in\N_0^2}\widehat{f}(\k)H_\k(\x)$ and $U$ be an orthogonal transform with matrix representation
\begin{align*}
U=\begin{pmatrix}U_{11}&U_{12}\\U_{21}&U_{22}\end{pmatrix}.
\end{align*}
Since we know that $\cA_U$ factors nicely with respect to the spaces $\cH^{(m)}$, we restrict us to the case $m=2$ and consider $\cA_{U,2}$ as well as the projection $f_2$ of $f$ onto the space $\cH^{(2)}$, i.e., $f_2(\x)=f(\x)\vert_{\cH^{(2)}}=\sum_{\vert\k\vert=2}\widehat{f}(\k)H_\k(\x)$. Thus $f_2$ is fully given by the vector of Hermite coefficients $(\widehat{f}(2,0),\widehat{f}(1,1),\widehat{f}(0,2))^\top$. Now we get the Hermite coefficients of $\cA_{U,2}f_2$ by standard matrix-vector calculus. The operator $\cJ_2$, mapping from $\cH^{(2)}$ to $\cK^{(2)}$, has the matrix representation 
\begin{align*}
\cJ_2=\begin{pmatrix}1&0&0\\0&\frac{1}{\sqrt{2}}&0\\0&\frac{1}{\sqrt{2}}&0\\0&0&1\end{pmatrix}
\end{align*}
and $\cJ_2^*$ is given by the transpose of $\cJ_2$. Moreover, $U^{\otimes2}$ can also be written as a matrix. Thus, we get the vector containing the Hermite coefficients of $\cA_{U,2}f_2$ by matrix-vector multiplication, i.e.,
\begin{align*}
\left(\widehat{\cA_{U,2}f_2}(2,0),\widehat{\cA_{U,2}f_2}(1,1),\widehat{\cA_{U,2}f_2}(0,2)\right)^\top\!=\cJ_2^\top U^{\otimes2}\cJ_2\,\left(\widehat{f}(2,0),\widehat{f}(1,1),\widehat{f}(0,2)\right)^\top\!.
\end{align*}
Since it suffices to know the Hermite coefficients of a function, we fully know how the orthogonal transform affects the function $f_2$.
\qed\end{example}

Because of Theorem \ref{th:automorphism} we have that the norm of any function $f\in L^2(\R^d,\varphi_d)$ can not be influenced by applying a mapping $\cA_U$, but we are interested in functions $f$ which belong to a Hermite space $\cH_\r$ with norm $\|\cdot\|_\r$. To preserve the representation \eqref{eq:mapping} of $\cA_U$, we have to equip $\cK$ with a suitable norm. For that, we introduce, for any $m\in\N_0$, on the space $\cK^{(m)}$ the norm
\begin{align}\label{eq:Krnorm}
\|w\|_{\ell^2,\r}=\sum_{\betab\in\{1,\ldots,d\}^m}\r(S(\betab))^{-1}w_\betab^2.
\end{align}
Then it follows that $\cJ$, as defined above, is still a isometry between $\cH_\r$ and $\cK$ as well as $\cJ^*\cJ=\id_{\cH_\r}$. Nevertheless, $\bigoplus_{m=0}^{\infty}U^{\otimes m}$ is not a Hilbert space automorphism on $\cK$ equipped with the norm \eqref{eq:Krnorm}. Consequently, we get that, in general,
\begin{align*}
\|f\|_\r\neq\|\cA_U f\|_\r.
\end{align*}
Thus, we can use orthogonal transforms in the sense that we choose for a given function $f\in\cH_\r$ an orthogonal transform $U$ such that the norm of $\cA_U f$ grows slower in $d$ than $\|f\|_\r$. 

The problem of finding an orthogonal transform that makes the norm 
smaller is not an easy one. However, note that there exists at least one
orthogonal transform such that the transformed problem is not worse than the
original one, namely the identity. 

In Irrgeher and Leobacher \cite{il12, il13} a  ``regression algorithm''
 is introduced
which determines an orthogonal transform tailored to the given problem which is
optimal in some sense. The idea of the algorithm is to approximate the function
$f$ by an linear function using a linear regression approach. For the linear
function it is easy to determine an orthogonal transform so that the linear
function becomes one-dimensional. Then this orthogonal transform is used
together with a QMC rule to solve the original integration
problem. 

It turns out that this algorithm fits well into the setting of Hermite spaces. Let $f$ be in a Hermite space $\cH_r$. Because of Theorem \ref{th:hermitespaceexpansion} we know that $f$ has a pointwise convergent Hermite expansion, i.e.,
\begin{align*}
f(\x)=\sum_{\k\in\N_0^d}\widehat{f}(\k)H_\k(\x)
\end{align*}
for all $\x\in\R^d$. The first step of the regression algorithm is the linear regression approach and it is easy to see that this means that we approximate the function $f$ by using the truncated Hermite expansion of order $1$, i.e.,
\begin{align*}
f(\x)\approx\sum_{\vert\k\vert\leq 1}\widehat{f}(\k)H_\k(\x)\,.
\end{align*}
We denote the multi-index $(0,\ldots,1,\ldots,0)$ with $1$ at the $j$-th entry
by $\e_j$ and we write $\linpart=(\widehat{f}(\e_1),\ldots,\widehat{f}(\e_d))$.
Now we determine an orthogonal transform $U$, namely a Householder reflection,
which maps the vector $(1,0,\ldots,0)\tr$ to
$\linpart/\|\linpart\|_{\ell_2}$, where $\|.\|_{\ell_2}$ denotes the euclidean
norm and where we assume that the linear part $\linpart$ of $f$ does not vanish. So we have that \begin{align*}
f(U\x)=\widehat{f}(\0)+\|\linpart\|_{\ell_2}x_1+\sum_{\vert\k\vert\geq
2}\widehat{f}(k)H_\k(U\x)\,,
\end{align*}
and we see that linear part of the Hermite expansion depends only on $x_1$.
If we now consider the norm of $f\circ U$ we get that
\begin{align*}
\|f\circ U\|^2_r=\widehat{f}(\0)^2+r(\e_1)^{-1}\|\linpart\|_{\ell_2}^2+\sum_{\vert\k\vert\geq 2}r(\k)^{-1}\widehat{f\circ U}(\k)^2\,.
\end{align*}
If $f$ is such that its linear part already covers almost all of the 
$\cH_r$-norm,
it may happen that $U$ reduces significantly the norm, which, in fact, depends
on the function $r$. For example, let us consider $r=p_{\alphab,\gammab}$
with $\gamma_1>\gamma_2\ge \gamma_3\ge \ldots$ .
Then, \begin{align*}
\|f\circ U\|_{p_{\alphab,\gammab}}=\widehat{f}(\0)^2+\gamma_1^{-1}\|\linpart\|_{\ell_2}^2+\sum_{\vert\k\vert\geq 2}p_{\alphab,\gammab}^{-1}(\k)\,\widehat{f\circ U}(\k)^2
\end{align*}
and 
\begin{align*}
\gamma_1^{-1}\|\linpart\|_{\ell_2}^2=\gamma_1^{-1}\sum_{j=1}^{d}\widehat{f}(\e_j)^2< \sum_{j=1}^{d}\gamma_j^{-1}\widehat{f}(\e_j)^2=\sum_{\vert\k\vert=1}p_{\alphab,\gammab}^{-1}(\k)\,|\widehat{f}(\k)|^2\,.
\end{align*}
Thus, we see that the part of the norm  coming from the linear part of $f$
can be reduced by applying the orthogonal transform $U$. That means that, 
under certain conditions on
the influence of the linear fraction of the function as well as on the weighted
Hermite space, the
regression algorithm can strictly reduce the norm of the transformed function .

In Irrgeher and Leobacher \cite{il12} realistic examples coming from finance
are considered. Numerical results show that orthogonal transforms, and in 
particular the ones computed by the 
regression algorithm, also lead to more efficient QMC integration.   

Furthermore, we want to mention that it can happen that a function does not belong to the Hermite space $\cH_\r$, but after applying an orthogonal transform the function is in $\cH_\r$. To illustrate that we consider the following simple example.

\begin{example}
Let $\alpha_1>\alpha_2>1$ and $\gamma_1, \gamma_2>0$. Now we consider function
$f:\R^2\longrightarrow\R$ of the form $f(\x)=f_1(x_1)f_2(x_2)$ with continuous
and Gaussian square-integrable functions $f_1, f_2:\R\longrightarrow\R$.
Furthermore, we suppose that $f_1$ is such that
$\sum_{k=1}^{\infty}k^{\alpha_1}|\hat{f_1}(k)|^2=\infty$ but
$\sum_{k=1}^{\infty}k^{\alpha_2}|\hat{f_1}(k)|^2<\infty$. 
This means that the Hermite
coefficients of $f_1$ decay fast enough such that
$\|f_1\|_{p_{\alpha_2,\gamma_2}}<\infty$, but for the smoothness parameter
$\alpha_1$ the decay is too slow, i.e.\ $\|f_1\|_{p_{\alpha_1,\gamma_1}}=\infty$. For
the function $f_2$ we assume that $0<\sum_{k=1}^{\infty}k^{\alpha_i}|\hat{f_2}(k)|^2<\infty$
holds for $i=1,2$. Then, $f$ does not belong to the Hermite space
$\cH_{\p_{(\alpha_1,\alpha_2),(\gamma_1,\gamma_2)}}$, because 
\begin{align*}
\|f\|_{\p_{(\alpha_1,\alpha_2),(\gamma_1,\gamma_2)}}=\|f_1\|_{p_{\alpha_1,\gamma_1}}\|f_2\|_{p_{\alpha_2,\gamma_2}}=\infty.
\end{align*}
However, let us consider the orthogonal transform $U$, given by 
\begin{align*}
U=\begin{pmatrix}0&1\\1&0\end{pmatrix}.
\end{align*} 
Applying the transform to the function $f$ we have $\cA_Uf(\x)=f(U\x)=f_1(x_2)f_2(x_1)$. Hence,
\begin{align*}
\|\cA_Uf\|_{\p_{(\alpha_1,\alpha_2),(\gamma_1,\gamma_2)}}=\|f_1\|_{p_{\alpha_2,\gamma_2}}\|f_2\|_{p_{\alpha_1,\gamma_1}}<\infty
\end{align*}
and $\cA_Uf\in\cH_{\p_{(\alpha_1,\alpha_2),(\gamma_1,\gamma_2)}}$. Although $f$ itself is not in the Hermite space, we know that the error bound \eqref{eq:error} holds if we apply the orthogonal transform $U$ to the function $f$. 
\end{example}

\section{Conclusion}\label{sec:conclusion}

We have studied the effect of orthogonal transforms on the effectivity
of QMC integration. We have put forward rigorous models in the
framework of weighted norms and reproducing kernel Hilbert spaces. Furthermore,
we have shown how to compute the norm of a function that is concatenated
with an orthogonal transform. This makes it possible to measure
whether the orthogonal transform makes the weighted norm smaller.

Of course, it would be desirable to have an algorithm at hand 
for finding  an optimal (or at least a good)
orthogonal transform for a given function that minimizes the weighted
norm of the transformed function. This has already been tried in
Imai and Tan \cite{imaitan07} as well as in Irrgeher and
Leobacher \cite{il12,il13}, but those approaches rely on linear approximations
of the function and therefore may fail spectacularly for simple but
nonlinear functions.  We have paved the way for more general algorithms
that are similar in spirit but take higher orders of the Hermite 
expansion into account. But a practical algorithm that, for example, proceeds
by approximating the integrand by a quadratic polynomial, has yet to
be developed. 

We have defined and studied multivariate integration in weighted Hermite
spaces. We have succeeded in giving sufficient conditions on the
weight sequence for polynomial tractability and strong polynomial tractability in weighted Hermite
spaces with polynomially decaying coefficients. Necessary conditions have yet
to be found, but we conjecture that they might coincide with the sufficient
ones. Moreover, we have given necessary and sufficient conditions
on polynomial tractability and strong polynomial tractability for a class of weighted Hermite spaces
with exponentially decaying coefficients.

The question of finding concrete point sets for QMC integration in the 
spaces considered is completely open. Likely candidates are maps of classical
low-discrepancy sequences in the unit hypercube to the $\R^d$ via the inverse
cumulative distribution function of the standard normal distribution. 

In any case we have contributed to the problem of explaining
the efficiency of QMC integration for very high-dimensional problems
by providing a general and flexible framework, namely that of Hermite spaces
and orthogonal transforms,
and by giving important first results on the subject in this framework.

\section*{Acknowledgments}
The authors are supported by the Austrian Science Fund (FWF): Projects F5508-N26 (Leobacher) and F5509-N26 (Irrgeher), respectively. The projects F5508-N26 and F5509-N26 are part of the Special Research Program ``Quasi-Monte Carlo Methods: Theory and Applications''. The first author is also partially supported by the Austrian Science Foundation (FWF), Project P21943.

\end{document}